\newcommand{\ds}{\displaystyle }
\newtheorem{thm}{Theorem}[section]
\newtheorem{prop}[thm]{Proposition}
\newtheorem{defn}[thm]{Definition}
\newtheorem{rem}[thm]{Remark}
\begin{document}

\title[A well-balanced numerical scheme for a model of chemotaxis]{A well-balanced numerical scheme for a  one dimensional quasilinear hyperbolic model of chemotaxis}
\author[R. Natalini, M. Ribot, and M. Twarogowska]{R. Natalini$^1$ \and M. Ribot$^2$ \and M. Twarogowska$^3$}

\thanks{ {\emph{keywords and phrases: }}hyperbolic system with source, chemotaxis, stationary solutions with vacuum, finite volume methods,  well-balanced scheme}

\thanks{\vspace{0.2cm}          
$^1$ Istituto per le Applicazioni del Calcolo ``Mauro Picone'', 
Consiglio Nazionale delle Ricerche,  via dei Taurini 19, I-00185 Roma, Italy
({\tt roberto.natalini@cnr.it})}.  

\thanks{$^2$ Laboratoire J. A. Dieudonn\'e, UMR CNRS 6621, Universit\'e de Nice-Sophia Antipolis,
 Parc Valrose, F-06108 Nice Cedex 02, France
\&  Project Team COFFEE, INRIA Sophia Antipolis, France
 ({\tt ribot@unice.fr}).}     
 
\thanks{$^3$ INRIA Sophia Antipolis - M\'editerran\'ee, 
OPALE Project-Team,
2004, route des Lucioles -- BP 93, 
06902 Sophia Antipolis Cedex, France ({\tt monika.twarogowska@inria.fr}).}

\pagestyle{myheadings} \markboth{Well-balanced scheme for a 1D quasilinear hyperbolic model of chemotaxis}{R. Natalini, M. Ribot, M. Twarogowska }\maketitle

\begin{abstract}
We introduce a numerical scheme to approximate a quasi-linear hyperbolic system  which models the movement of cells under the influence of chemotaxis. Since we expect to find  solutions which contain vacuum parts, we propose an upwinding scheme which handles properly the presence of vacuum and, besides,  which gives a good approximation of the time asymptotic states of the system. For this scheme we prove some basic analytical properties and study its stability near some of the steady states of the system. Finally, we present some numerical simulations which show the dependence of the asymptotic behavior of the solutions upon  the parameters of the system.\newline\newline
{\bf AMS Primary: 65M08; Secondary:  35L60,  92B05, 92C17.}
\end{abstract}

\section{Introduction}\label{sec:introduction}

The movement of bacteria, cells or  other microorganisms under the effect of a chemical stimulus, represented by a chemoattractant, has been widely studied in mathematics in the last two decades, see \cite{H, M1,M2, Pe}, and numerous models involving partial differential equations have been proposed. 
The basic unknowns in these chemotactic models are the density of individuals and the concentrations of some chemical attractants. One of the most considered models is the Patlak-Keller-Segel system \cite{KS}, where the evolution of the density of cells is described by a parabolic equation, and the concentration of  a chemoattractant is generally given by a parabolic or elliptic equation, depending on the different regimes to be described and on authors' choices. The behavior of this system is quite well known now: in the one-dimensional case, the solution is always global in time \cite{NY07}, while in two and more dimensions the solutions exist globally in time
or blow up according to the size of the initial data, see \cite{CC08,CCE12} and references therein. However, a drawback of this model is that the diffusion leads alternatively to a fast dissipation or an explosive behavior,  and prevents us to observe intermediate organized structures, like aggregation patterns.

In this paper, we consider   a  quasi-linear hyperbolic system of chemotaxis  introduced by Gamba et al. \cite{Gamba2} to describe the early stages of the vasculogenesis process,  namely  the formation of blood vessels networks during the embryonic development. 
The model forms a  hyperbolic--parabolic system for the following unknowns: the density of endothelial cells $\rho(x,t)$,  their momentum  $\rho u(x,t)$ and  the concentration  $\phi(x,t)$ of a chemoattractant. In one space dimension the system reads
\begin{equation}\label{eq:main_system}
\left\{\begin{array}{l}
\displaystyle{\rho_{t}+(\rho u)_{x}=0,}\\
\displaystyle{(\rho u)_{t}+\left(\rho u^{2}+P(\rho)\right)_{x}=-\alpha\rho u+\chi\rho\phi_{x},}\\
\displaystyle{\phi_{t}=D\phi_{xx}+a\rho-b\phi}.
\end{array}\right.
\end{equation}
 Loosely speaking, the movement  of cells is directed by the gradient of the chemical mediator and is slowed down by the adhesion with the substratum. The positive constants $\chi$  and $\alpha$ measure respectively  the strength of the cells response to the concentration of the chemical substance and   the strength of the friction forces. Overcrowding of cells is prevented by a phenomenological density dependent pressure function given by the pressure law for isentropic gases
\begin{equation}\label{eq:pressure_law}
P(\rho)=\varepsilon\rho^{\gamma},\qquad\gamma>1,\varepsilon>0.
\end{equation}
Besides, the evolution of chemoattractant is given by a linear diffusion equation with a source term which depends on $\rho$: the chemoattractant is released by the cells, diffuses in the environment and is linearly degraded. The positive parameters $D,a,b$ are respectively its diffusion coefficient, the production rate
and  the degradation rate and the production term is proportional to the cell density.

This model was introduced to mimic the results of  in vitro experiments  performed by Serini et al. \cite{Gamba1} using human vascular, endothelial cells. Randomly seeded on the plain gel substratum, these cells migrate and interact together via chemotaxis signaling and, after a while, they aggregate to 
  form a network of capillaries. To be more precise,   different final stages are observed depending on the size of the  initial density of cells. For low densities,  only isolated, disconnected clusters are formed. 
Increasing the number of cells,  a sharp  percolative transition occurs and a capillary-like network appears with a characteristic chord length independent of the size of the initial density. Further increase of the number of cells leads to a continuum crossover characterized by the accumulation of additional cells on the network chords until the structure is no longer visible. Finally, for very high initial densities, 
a continuous carpet of cells with holes, the so called ''Swiss cheese'' configuration, is observed.
From a mathematical point of view, 
emerging structured patterns, such as capillary-like networks, 
may be seen as the appearance of nonconstant asymptotic 
solutions   with vacuum, namely solutions composed of regions where the density is strictly positive and of  regions where the density (of cells) vanishes.

To reproduce the biological setting, we  consider system (\ref{eq:main_system}) on a bounded domain $[0,L]$  with no-flux boundary conditions, namely, for all time $t>0$,  $\ds \rho_{x}(0,t)=\rho_{x}(L,t)=0$ for the density, $ \rho u(0,t)= \rho u(L,t)=0$ for the momentum and $\phi_{x}(0,t)=\phi_{x}(L,t)=0$ for the chemical concentration.  From the analytical side, considering the Cauchy problem on the whole space, but in all space dimensions, it is possible to prove the global existence of smooth solutions, if the initial data are small perturbations of a strictly positive constant state, see \cite{thD,DS12}. For the present case of the one-dimensional boundary value problem, when the differential part is linearized, it is possible to prove the global existence and the time asymptotic decay of the solutions, when the initial data consist of small perturbation of stable equilibrium constant states, see \cite{gumanari}. However, the analytical study in the present setting is still undone and is difficult  in  the presence of vacuum, since  the hyperbolic part of the model degenerates as the eigenvalues coincide when the mass density vanishes, but see \cite{masmoudi, xumin} for some rigorous results about the local existence of solutions for related models without chemotaxis. 

 Actually, preliminary numerical simulations show that, even if we start from strictly positive initial data, vacuum may appear in finite time. This occurrence is much more physically relevant with respect to the analogous situation in gas dynamics. If vacuum is not expected to appear really in gases, it is fully relevant when dealing with the density of cells, i.e.: there are admissible regions without cells, and in some sense it is the main goal of a biologically consistent model. This is a situation somewhat similar to what occurs when dealing with  flows in rivers with shallow water type equations \cite{Audusse}. Besides, also looking at the numerical approximation, dealing with vacuum needs for a special care, since we have  to guarantee for the non negativity of the solutions \cite{Bouchut_book}.  This can be understood at the level of the associated numerical flux. A numerical flux resolves the vacuum if for all values of the approximate solution, it is able to generate nonnegative solutions with a finite speed of propagation. In this paper it will be crucial to use schemes with this kind of property.

Another obstacle to a serious numerical exploration is given by the possible lack of proper resolution of nonconstant steady states. Actually, at least for  not too large initial masses, solutions to system (\ref{eq:main_system}) are expected to stabilize for large times, around some global steady states of the system and, when the system reaches equilibrium, the flux is expected to vanish. However, most of the current schemes fail to reproduce this behavior. For instance, a classical centered discretization for the source  is not precise enough 
 near steady states.  
As well known, see for instance \cite{Natalini_Ribot, Gosse_chemo2}, this is a usual problem for schemes dealing with hyperbolic problems with source. This is   why  other approaches have been introduced   to balance properly the fluxes and the sources, so giving a  more accurate approximation at equilibria.
In the case of a semilinear model, obtained from (\ref{eq:main_system}) by neglecting the drift  term $\ds (\rho u^{2})_{x}$ and taking a linear pressure, i.e. $\gamma=1$, Natalini and Ribot  \cite{Natalini_Ribot} have proposed   an Asymptotically High Order method  \cite{Aregba_Briani_Natalini} adapted to the case of a system with an external source term  and set on a
  bounded interval.   This technique  increases the accuracy of the scheme for large times and yields  a  correct asymptotic stabilization near the  nonconstant equilibria. In \cite{Gosse_chemo1, Gosse_chemo2}, Gosse has studied the same problem with the well-balanced technique  \cite{Greenberg_LeRoux, Gosse_Toscani}  in the framework of finite volume schemes, obtaining similar results. However, both techniques are limited to diagonalizable systems and are difficult to extend  to the  quasilinear model (\ref{eq:main_system}). 
 Let us recall also that, in the  case of the quasilinear system with  linear pressure $\gamma=1$ and without considering the presence of vacuum, 
 Filbet and Shu  \cite{Filbet_Shu} have used the Upwinding Sources at Interface (USI) methodology \cite{MR1933816} to obtain a well-balanced scheme. Their main ingredient was the hydrostatic reconstruction, which was originally applied to the Saint-Venant system by Audusse et al. \cite{Audusse}, implying the preservation of steady states with vanishing velocity. Actually, no boundary conditions were considered in  \cite{Filbet_Shu}, and so the scheme was not tested against nonconstant steady states. 

The main goal of this paper is quite different, and consists in introducing  a new scheme which is able to deal both with vacuum problems and with problems  near nonconstant steady states arising from the interaction with the source, which is composed by the friction and the forcing chemotactic terms.  
 The strategy is to adapt the ideas in \cite{Bouchut_Ounaissa_Perthame}, to design a scheme which is able to balance the numerical fluxes with the source term, considering new interface variables and vanishing velocities. Unlike \cite{Filbet_Shu},  we treat the damping term through the  interface variables instead of using additional, fractional steps to integrate it in time. We prove that our scheme  preserves the non negativity of the density and  the stationary states with a vanishing velocity. Finally, the scheme we consider  is able to  treat vacuum states at the free boundary, which was not considered in any of the previous works.

Before we propose and study our scheme, we make a preliminary investigation on nonconstant stationary solutions with vacuum for system (\ref{eq:main_system}) and, in particular,  on  single bump solutions,  which are positive 
     only on one connected region.  We  restrict ourselves mostly  to  the case of a quadratic pressure  $\gamma=2$, which is the simplest one for finding explicit expressions.
     Under these two restrictions, we give a complete description of the stationary solutions with vacuum. We also  numerically show that these solutions are stable and that they can be found as asymptotic states of the system (\ref{eq:main_system}), even in the case of strictly positive initial data.
    Other configurations with several regions of positive densities are also  found numerically  as asymptotic solutions of the system, but we cannot determine for the moment which are the parameters  leading to one configuration or another. These results show that model (\ref{eq:main_system}) gives a realistic description of the vessels formations, which can be successfully tuned against experimental data.

This paper is organized as follows:  in Section \ref{sec:stationary_solutions}, we describe the stationary solutions in the case of $\gamma=2$ and of one region of positive density. Both cases of a lateral bump and of a centered bump are considered. Then, 
in Section \ref{sec:scheme},   we explain the construction of the scheme we use to discretized system (\ref{eq:main_system}) and we prove that it preserves the nonnegativity of the density, and  the stationary states with vanishing velocities. Finally,
in Section \ref{sec:simulations}, we present some numerical results,  studying the accuracy of our scheme and showing that the stationary solutions of  Section \ref{sec:stationary_solutions} are numerically stable. We also  explore the dependence of asymptotic solutions on the parameters of the system, especially on the initial mass and on the value of the adiabatic coefficient $\gamma$.

\section{Stationary solutions}\label{sec:stationary_solutions}
In this section, we analyze the existence and the structure of stationary solutions to system (\ref{eq:main_system}) defined on a bounded interval $[0,L]$ with no-flux boundary conditions given in one space dimension by
\begin{equation}\label{boundary_conditions}
\rho_{x}(0,\cdot)=\rho_{x}(L,\cdot)=0,\quad \rho u(0,\cdot)= \rho u(L,\cdot)=0,\quad \phi_{x}(0,\cdot)=\phi_{x}(L,\cdot)=0.
\end{equation}
Remark first that considering  the evolution problem  (\ref{eq:main_system}) with the previous boundary conditions \eqref{boundary_conditions} implies that the mass of the system is constant in time, namely that
\begin{equation}\label{def_mass}
M=\int_{[0,L]} \rho(x,0) \, dx=\int_{[0,L]} \rho(x,t) \, dx, \, \textrm{ for all } t \geq 0.
\end{equation}
Therefore, the mass $M$ will be considered in what follows as a parameter which characterizes  stationary solutions.
\subsection{Preliminaries}
The steady states of (\ref{eq:main_system}) are the solutions of the following stationary system
\begin{subequations}\label{SS:general}
\begin{equation}
\label{SS1}(\rho u)_x=0,
\end{equation}
\begin{equation}
\label{SS2}\left(\rho u^2+P(\rho)\right)_x=-\alpha\rho u+\chi\rho\phi_x ,
\end{equation}
\begin{equation}
\label{SS3}-D\phi_{xx}=a\rho-b\phi.
\end{equation}
\end{subequations}
Using the boundary condition we have that  
$\rho u=0$, and  equation (\ref{SS2}) becomes 
\begin{equation}\label{SS2_static}
P(\rho)_{x}=\chi\rho\phi_{x},
\end{equation}  
which is automatically satisfied if $\rho$ and $\phi$ are constant. 
More generally,  for a pressure $P(\rho)=\varepsilon\rho^{\gamma}$ with  $\gamma>1$, we have two possible solutions for  equation \eqref{SS2_static}, which are
$$\rho=0$$ 
or 
$$\rho^{\gamma-1}(x)=\frac{\chi(\gamma-1)}{\varepsilon\gamma}\phi(x)+K,$$
 where $K$ is an integration constant.  Remark that functions $\rho$ and  $\phi$ are also constrained to respect  relation  (\ref{SS3}). 
 
 As a consequence,  nonconstant equilibria can be composed of  intervals where the density is strictly positive, which we call bumps, and of intervals where the density vanishes. However, 
 we are not able to determine  a priori  the number of  such intervals as a function of the data. 
  This is why we 
   fix  the number of intervals with positive density  and we denote it by $p\in\mathbf{N}$. In addition, we denote by $x_{i}$ the boundaries of these intervals, setting $x_{0}=0$ and $x_{2p-1}=L$, so that the general form of 
    nonconstant steady  states is
\begin{equation*}\label{SSrho}
\rho(x)=\sum_{k=1}^{p}\rho_{k}(x)\chi_{[x_{2k-2},x_{2k-1}]}(x),
\end{equation*}
where $\chi_{A}$ is the indicator function of the interval $A$.
On  the intervals of the form  $[x_{2k-2},x_{2k-1}]$, the density satisfies the relation 
\begin{equation*}\label{SSrhok}
\rho_k(x)=
\left(\frac{\chi(\gamma-1)}{\varepsilon\gamma}\phi_k(x)+K_k\right)^{\frac{1}{\gamma-1}},
\end{equation*}
where $\phi_{k}$ is the solution to
\begin{equation*}\label{SSphik}
-D\left(\phi_{k}\right)_{xx}=\left\{
\begin{array}{lcl}
a\rho_{k}-b\phi_{k}&\textrm{for}&x\in[x_{2k-2},x_{2k-1}],\ k=1,...,p,\\ \\
-b\phi_{k}&\textrm{for}&x\in[x_{2k-1},x_{2k}], \ k=1,...,p-1.
\end{array}\right.
\end{equation*}
Finding $\phi_{k}$ from the above equations for a general adiabatic exponent $\gamma>1$ is equivalent to solving a second order, nonlinear differential equation of the form
\begin{equation*}\label{IIorderODE}
y_{xx}=A_{1}y+A_{2}y^{\frac{1}{\gamma-1}}+A_{3},
\end{equation*}
where $A_1,A_2,A_3$ are constants. This is a non trivial task apart from the case $\gamma=2$. 

Moreover, a unique  explicit solution can be obtained only in the case of one interval where the density is strictly positive. Otherwise there are more constants to determine than available equations. More precisely, we need to find $4p-2$ constants coming from the integration, $2p-2$ interface points $x$ and $p$ constants $K$. It gives $7p-4$ parameters to determine. On the other hand, we have only $6p-3$ equations coming from the boundary conditions, the continuity of $\phi$ and its first and second space derivatives and from the total mass conservation condition. These two quantities match only  in the case of $p=1$. For $p>1$, we would have an infinite number of stationary conditions depending on $p-1$ parameters. 

 Here, we restrict ourselves to the case of only  one interval where the density is  positive. In this particular case, we can determine exactly the stationary solution  as a function of the initial mass of the density $M$, the length of the domain  $L$ and the other parameters of the system $\alpha$, $\chi$,  $\varepsilon$, $\gamma$, $a$, $b$ and  $D$.  

\subsection{Quadratic pressure $\gamma=2$: the case of a lateral bump} \label{lateral}
We give a precise characterization of the equilibria for $P(\rho)=\varepsilon\rho^2$ when we restrict our attention to solutions with only one bump, i.e.: $p=1$. One of the possible forms is a lateral bump, that is: 
there exists $\bar{x}\in(0,L)$ such that the density satisfies
\begin{equation}\label{rho}
\rho(x)=\left\{\begin{array}{ll}
\ds \frac{\chi}{2\varepsilon}\phi(x)+K,& \textrm{ for } x\in[0,\bar{x}],
\medskip
\\
0, &\textrm{ for } x\in(\bar{x},L],
\end{array}\right.
\end{equation}
and $\phi$  satisfies equation (\ref{SS3}). 
In the following proposition, we completely describe these steady states,  as a function of  the mass $M$ of the density defined in equation  \eqref{def_mass}.
\begin{prop}\label{lat}
Consider system \eqref{SS:general}  on the interval $[0,L]$ with boundary conditions  \eqref{boundary_conditions} and with  a quadratic pressure $P(\rho)=\varepsilon\rho^{2}$.  If $\ds\tau=\frac{1}{D}\left(\frac{a\chi}{2\varepsilon }-b\right)>0$ and  $\ds L>\frac{\pi}{\sqrt{\tau}}$, there exists a unique, positive solution  with a density of the form \eqref{rho} and of  mass $M$. This stationary solution is defined by 
\begin{subequations}\label{lateral_bump}
\begin{equation}\label{bump:rho}
\rho(x)=\left\{\begin{array}{ll}
\ds \frac{\chi}{2\varepsilon}\phi(x)+K,& \textrm{ for } x\in[0,\bar{x}],
\medskip\\
0, &\textrm{ for } x\in(\bar{x},L],
\end{array}\right.
\end{equation}
and
\begin{equation}\label{bump:phi}
\phi(x)=\left\{\begin{array}{ll}
\ds \frac{2\varepsilon b K}{\tau\chi D}\frac{\cos(\sqrt{\tau}x)}{\cos(\sqrt{\tau}\bar{x})}-\frac{aK}{\tau D},& \textrm{ for } x\in[0,\bar{x}],
\medskip\\
\ds -\frac{2\varepsilon  K}{\chi}
\frac{\cosh(\sqrt{\frac{b}{D}}(x-L))}{\cosh(\sqrt{\frac{b}{D}}(\bar{x}-L))}, 
&\textrm{ for } x\in(\bar{x},L].
\end{array}\right.
\end{equation}

The free boundary point $\bar{x}$ is given by 
the only value $\ds \bar{x}\in\frac{1}{\sqrt{\tau}}(\pi/2,\pi)$ such that
\begin{equation}\label{bump:x}
\sqrt{\frac{b}{\tau D}}\tan(\sqrt{\tau}\bar{x})=\tanh(\sqrt{\frac{b}{D}}(\bar{x}-L))
\end{equation}
and the constant $K$ is  equal to
\begin{equation}\label{bump:K}
K=\frac{D}{b}\frac{ M \tau^{3/2}}{\tan(\sqrt{\tau}\bar{x})-\sqrt{\tau}\bar{x}}.
\end{equation}
\end{subequations}
If $\tau<0$, or $\tau>0$ but $\ds L<\frac{\pi}{\sqrt{\tau}}$, then there is no one bump solution to the problem. 
\end{prop}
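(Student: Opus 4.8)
\emph{Plan of proof.} The idea is to reduce the stationary system to a single linear second–order ODE on the bump $[0,\bar x]$, to determine every integration constant and the free boundary $\bar x$ from the interface and mass conditions, and then to discuss solvability according to the sign of $\tau$ and the size of $L$. Concretely, from \eqref{SS1} together with $\rho u(0,\cdot)=0$ one gets $\rho u\equiv 0$, so \eqref{SS2} collapses to \eqref{SS2_static}, i.e. $2\varepsilon\rho\rho_x=\chi\rho\phi_x$; dividing by $\rho$ on $[0,\bar x]$ gives $\rho=\frac{\chi}{2\varepsilon}\phi+K$, as in \eqref{rho}. Substituting this into \eqref{SS3} yields on $[0,\bar x]$ the equation $\phi_{xx}+\tau\phi=-aK/D$, and on $(\bar x,L]$, where $\rho=0$, the equation $\phi_{xx}=\frac bD\phi$. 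When $\tau>0$ the first equation is oscillatory; imposing $\phi_x(0)=0$ selects the cosine branch on $[0,\bar x]$, and imposing $\phi_x(L)=0$ selects the $\cosh(\sqrt{b/D}(x-L))$ branch on $(\bar x,L]$, so that one recovers the shape of \eqref{bump:phi} up to the two amplitudes and the constant $K$.

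Next I would impose the matching at the free boundary. Since $\phi$ solves a second–order elliptic equation with bounded right–hand side $a\rho-b\phi$, it is $C^1$, hence $\phi$ and $\phi_x$ are continuous at $\bar x$; moreover $\rho$ must be continuous at $\bar x$, because a jump of $\rho$ from a positive value down to $0$ would create a Dirac mass in $P(\rho)_x$ that cannot be balanced by the bounded term $\chi\rho\phi_x$ in \eqref{SS2_static}, so $\rho(\bar x^-)=0$, i.e. $\phi(\bar x)=-2\varepsilon K/\chi$. The condition $\rho(\bar x^-)=0$ fixes the cosine amplitude and gives the first line of \eqref{bump:phi}; continuity of $\phi$ fixes the $\cosh$ amplitude and gives the second line; continuity of $\phi_x$, after cancelling the common factor $2\varepsilon K/\chi$ and using $\tau D=\frac{a\chi}{2\varepsilon}-b$, reduces precisely to \eqref{bump:x}. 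Plugging the first line of \eqref{bump:phi} back into $\rho=\frac{\chi}{2\varepsilon}\phi+K$ gives $\rho(x)=\frac{bK}{\tau D}\big(\frac{\cos(\sqrt\tau x)}{\cos(\sqrt\tau\bar x)}-1\big)$, and the mass constraint $\int_0^{\bar x}\rho\,dx=M$, evaluated by an elementary integration, yields \eqref{bump:K}. One then checks directly that, conversely, the pair $(\rho,\phi)$ given by \eqref{lateral_bump} solves \eqref{SS:general} with \eqref{boundary_conditions} in this weak sense.

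It remains to analyze \eqref{bump:x} and establish positivity. First I would show that an admissible interface must satisfy $\bar x\in\big(\frac{\pi}{2\sqrt\tau},\frac{\pi}{\sqrt\tau}\big)$: on the ranges of $\bar x$ where $\tan(\sqrt\tau\bar x)\ge 0$, equation \eqref{bump:x} has no root since its left side is then nonnegative while its right side $\tanh(\sqrt{b/D}(\bar x-L))$ is negative (as $\bar x<L$); and if $\sqrt\tau\bar x>\pi$, then $\cos(\sqrt\tau x)$ is no longer monotone on $[0,\bar x]$ and the density $\frac{bK}{\tau D}\big(\frac{\cos(\sqrt\tau x)}{\cos(\sqrt\tau\bar x)}-1\big)$ is forced to change sign on $[0,\bar x)$, which is incompatible with a nonnegative one–bump profile. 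On $\big(\frac{\pi}{2\sqrt\tau},\frac{\pi}{\sqrt\tau}\big)$ I would set $g(\bar x)=\sqrt{\frac{b}{\tau D}}\tan(\sqrt\tau\bar x)-\tanh(\sqrt{b/D}(\bar x-L))$; then $g(\bar x)\to-\infty$ as $\bar x\downarrow\frac{\pi}{2\sqrt\tau}$, while $g(\pi/\sqrt\tau)=-\tanh(\sqrt{b/D}(\pi/\sqrt\tau-L))$, which is $>0$ precisely when $L>\pi/\sqrt\tau$, and $g'(\bar x)=\sqrt{b/D}\big(\frac{1}{\cos^2(\sqrt\tau\bar x)}-\frac{1}{\cosh^2(\sqrt{b/D}(\bar x-L))}\big)>0$ on this interval because $\cos^2\le 1\le\cosh^2$ with no simultaneous equality; hence $g$ has a unique zero $\bar x$ there iff $L>\pi/\sqrt\tau$. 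For this $\bar x$ one has $\tan(\sqrt\tau\bar x)-\sqrt\tau\bar x<0$, so \eqref{bump:K} gives $K<0$; then $\rho$ is strictly positive and decreasing on $[0,\bar x)$ with $\rho(\bar x)=0$, and $\phi$ is positive, which yields the announced existence and uniqueness.

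For the nonexistence part: if $\tau<0$, the bump equation is $\phi_{xx}=-\tau\phi-aK/D$, whose solution with $\phi_x(0)=0$ is of $\cosh$ type, so $\phi_x(\bar x^-)$ has the sign of its amplitude; but continuity of $\rho$ and of $\phi$ at $\bar x$ forces the amplitude on $[0,\bar x]$ and the one on $(\bar x,L]$ to share the same sign, while continuity of $\phi_x$ forces them to have opposite signs (since $\sinh(\sqrt{-\tau}\bar x)>0$ and $\sinh(\sqrt{b/D}(\bar x-L))<0$), so all amplitudes vanish and $M=0$, a contradiction; if $\tau>0$ but $L<\pi/\sqrt\tau$, the same function $g$ is negative on $\big(\frac{\pi}{2\sqrt\tau},L\big)$ — it tends to $-\infty$ at the left endpoint, is increasing, and $g(L^-)=\sqrt{\frac{b}{\tau D}}\tan(\sqrt\tau L)<0$ — and positive for $\bar x<\frac{\pi}{2\sqrt\tau}$, so it has no admissible zero. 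The main obstacle, I expect, is this last analysis of \eqref{bump:x}: singling out the correct interval $\big(\frac{\pi}{2\sqrt\tau},\frac{\pi}{\sqrt\tau}\big)$, which requires combining the sign of $\tan$ with the nonnegativity of $\rho$, and then establishing the monotonicity of $g$ and the boundary values that produce the sharp threshold $L=\pi/\sqrt\tau$.
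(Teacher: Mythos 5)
Your proposal is correct and follows essentially the same route as the paper: piecewise integration of the linear ODE for $\phi$ under the Neumann conditions, matching at the free boundary, the transcendental equation \eqref{bump:x}, and a monotonicity/intermediate-value analysis of it. The only cosmetic differences are that you impose continuity of $\rho$ at $\bar x$ (justified distributionally) where the paper imposes continuity of $\phi_{xx}$ — equivalent via \eqref{SS3} — and that you localize the root of \eqref{bump:x} to $\frac{1}{\sqrt\tau}(\pi/2,\pi)$ before applying monotonicity rather than first enumerating all roots and then discarding those violating positivity; your sign checks on $K$ and $\rho$ are in fact slightly more detailed than the paper's.
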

\begin{proof}
To prove the particular form of the equilibrium (\ref{lateral_bump}), let us first insert (\ref{bump:rho}) into (\ref{SS3}). Assuming $\ds\tau=\frac{1}{D}\left(\frac{a\chi}{2\varepsilon }-b\right)>0$ and using boundary conditions $\ds \phi_{x}(0,\cdot)=\phi_{x}(L,\cdot)=0$,  
 the concentration $\phi$ can be written as
\begin{equation*}
\ds \phi(x)=\left\{\begin{array}{ll}
C_{1}\cos(\sqrt{\tau}x)-\ds \frac{aK}{\tau D}, &  \textrm{ for } x\in[0,\bar{x}],
\medskip\\
C_{2}\cosh(\sqrt{\frac{b}{D}}(x-L)),&  \textrm{ for } x\in(\bar{x},L].
\end{array}\right.
\end{equation*}
Using  the continuity of  functions $\phi$, $\phi_{x}$ and $\phi_{xx}$ at the interface point $\bar{x}$,  we obtain the  values of the constants $C_{1}$ and $ C_{2}$ 
, namely
\begin{equation*}
\displaystyle{C_{1}=\frac{2\varepsilon b K}{\tau\chi D}\cos(\sqrt{\tau}\bar{x})^{-1}},\
\displaystyle{C_{2}=-\frac{2\varepsilon  K}{\chi}\cosh(\sqrt{\frac{b}{D}}(\bar{x}-L))^{-1}} .
\end{equation*}
which 
 yield (\ref{bump:rho}) and (\ref{bump:phi}). 
The remaining equation gives the relation  (\ref{bump:x}) for the location of the interface $\bar{x}$. 
 The parameter $K$ can be calculated from the total  mass  density
\begin{displaymath}
M=\int_{0}^{L}\rho(x)dx=\int_{0}^{\bar{x}}\frac{\chi}{2\varepsilon}\phi(x)dx+K\bar{x}
\end{displaymath}
giving (\ref{bump:K}).

Now we have to determine whether there exists a solution to (\ref{bump:x}) or not. 
The function
$\ds f(x)=\sqrt{\frac{b}{\tau D}}\tan(\sqrt{\tau}x)-\tanh\left(\sqrt{\frac{b}{D}}(x-L)\right)$
is not defined 
on $\ds \frac{1}{\sqrt{\tau}}\left(\frac{\pi}{2}+\pi\mathbb{Z}\right)$ and has a positive derivative elsewhere. 
Its value  $f(0)$ is positive and the sign of $\ds f( \frac{1}{\sqrt{\tau}}\left(\pi+ k \pi\right)), \, k \in \mathbb{Z}, $ is the same as  the sign of $\ds L-\frac{1}{\sqrt{\tau}}\left(\pi+ k \pi\right)$.
So,  if there exists $k \in \mathbb{N}$ such that $\ds L \geq \frac{k \pi }{\sqrt \tau}$, $f$ has exactly one zero  
 in each interval of the form 
$\displaystyle{ \frac{1}{\sqrt \tau}\left((\pi/2,\pi)+\pi \{0,1,...,k-1\}\right)}$. All these zeros are candidates to be a boundary $\bar x$. However,  only the smallest of these points, namely the one belonging to the interval $\ds \frac{1}{\sqrt{\tau}}(\pi/2,\pi),$ guarantees the positivity of the  function $\rho$ defined by \eqref{bump:rho}. Remark that in the case when $\ds L<  \frac{\pi}{\sqrt \tau}$, there is no  stationary solution of the form (\ref{lateral_bump}).

Finally, in the case  $\tau<0$, 
the same  computation leads to  write 
the concentration $\phi$ under  the form
\begin{equation}
\phi(x)=\left\{\begin{array}{ll}
\displaystyle{C_{1}\cosh(\sqrt{-\tau}x)-\frac{aK}{\tau D}},&  \textrm{ for } x\in[0,\bar{x}],\\
\displaystyle{C_{2}\cosh(\sqrt{\frac{b}{D}}(x-L))},& \textrm{ for } x\in(\bar{x},L],
\end{array}\right.
\end{equation}
and the continuity of $\phi,\phi_{x},\phi_{xx}$ at $\bar{x}$, gives
the following relation for the interface point $\bar{x}$ 
\begin{equation}
\displaystyle{\sqrt{-\frac{b}{D\tau}}\tanh(\sqrt{-\tau}\bar{x})=\tanh\left(\sqrt{\frac{b}{D}}(\bar{x}-L)\right)}.
\end{equation}
However, this equation has no solution for $\bar{x}\in(0,L)$, since the left-hand side is negative and  the right-hand side positive.
\end{proof}
\subsection{Quadratic pressure $\gamma=2$ :  case of a centered bump} 
Let us consider the case of a centered bump, namely the case of 
 one interval where the density is strictly positive in the interior of the domain. Denoting by $\bar{x},\bar{y}\in(0,L)$ 
 the boundaries of this interval, with  $\bar{x}<\bar{y}$, the density of the nonconstant steady state can be written  as
\begin{equation}\label{CB:rho}
\rho(x)=\ds \left\{\begin{array}{ll}
0, & \textrm{ for }x\in[0,\bar{x}),\medskip \\
\ds\frac{\chi}{2\varepsilon}\phi(x)+K, &  \textrm{ for } x\in[\bar{x},\bar{y}],\medskip \\
0,&  \textrm{ for } x\in(\bar{y},L].
\end{array}\right.
\end{equation}
In the following proposition, we describe  these stationary solutions as a function of the mass $M$ and of the length of the domain $L$.
 \begin{prop}
Let us consider the system \eqref{SS:general} set on the interval $[0,L]$ with boundary conditions  \eqref{boundary_conditions} and with  a quadratic pressure $P(\rho)=\varepsilon\rho^{2}$. 
 If $\ds\tau=\frac{1}{D}\left(\frac{a\chi}{2\varepsilon }-b\right)>0$  and  $\ds L>\frac{2\pi}{\sqrt{\tau}}$ then there exists a unique, positive solution with a density  of mass $M$ and of the form \eqref{CB:rho}. This solution is given by
 the following expressions:
\begin{subequations}
\begin{equation}\label{centered:rho}
\rho(x)=\ds \left\{\begin{array}{ll}
0, & \textrm{ for }x\in[0,\bar{x}),\medskip \\
\ds\frac{\chi}{2\varepsilon}\phi(x)+K, &  \textrm{ for } x\in[\bar{x},\bar{y}],\medskip \\
0,&  \textrm{ for } x\in(\bar{y},L],
\end{array}\right.
\end{equation}
and
\begin{equation}\label{centered:phi}
\phi(x)=\left\{\begin{array}{ll}
\ds -\frac{2\varepsilon K}{\chi}\frac{\cosh(\ds\sqrt{\frac{b}{D}}x)}{\cosh(\ds \sqrt{\frac{b}{D}}\bar{x})} ,& \textrm{ for } x\in[0,\bar{x}),
\medskip\\
\ds \frac{2\varepsilon b K}{\tau\chi D}\frac{\cos(\sqrt{\tau}(x-\frac{L}{2}))}{\cos(\sqrt{\tau}(\bar{x}-\frac{L}{2}))}-\frac{aK}{\tau D},& \textrm{ for } x\in[\bar{x}, \bar{y}],
\medskip\\
\ds -\frac{2\varepsilon K}{\chi}\frac{\cosh(\ds\sqrt{\frac{b}{D}}(x-L))}{\cosh(\ds\sqrt{\frac{b}{D}}\bar{x})}, 
&\textrm{ for } x\in(\bar{y},L].
\end{array}\right.
\end{equation}
 The free boundary point $\bar{y}$ is such that  $\bar{y}=L-\bar{x}$ and so the solution is symmetric and  the boundary $\bar{x}$ is  given by the only value $\ds\bar{x}\in \left( \frac{L}{2}-\frac{\pi}{\sqrt{\tau}},
 \frac{L}{2}-\frac{\pi}{2\sqrt{\tau}}\right)$ such that
\begin{equation}\label{interface_centered}
\tanh(\sqrt{\frac{b}{D}}\bar{x})=\sqrt{\frac{b}{\tau D}}\tan(\sqrt{\tau}(\bar{x}-L/2)).
\end{equation}
The constant  $K$ is equal to
\begin{equation}\label{K}
 \ds K=\frac{M\tau}{(2\bar{x}-L)\ds\frac{b}{D}-2\ds\sqrt{\frac{b}{D}}\tanh(\sqrt{\frac{b}{D}}\bar{x})}.
\end{equation}
\end{subequations}
For other values of the parameters there are no solutions of this form. 
\end{prop}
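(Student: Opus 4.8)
The plan is to follow the same route as in the proof of Proposition~\ref{lat}, taking advantage of the extra structure a bump sitting in the interior of $[0,L]$ imposes. First I insert the ansatz \eqref{CB:rho} into \eqref{SS3}. On the two vacuum intervals $[0,\bar x)$ and $(\bar y,L]$ the concentration solves $D\phi_{xx}=b\phi$, and the no-flux conditions in \eqref{boundary_conditions} force $\phi(x)=A\cosh(\sqrt{b/D}\,x)$ on $[0,\bar x)$ and $\phi(x)=B\cosh(\sqrt{b/D}\,(x-L))$ on $(\bar y,L]$. On the bump, substituting $\rho=\frac{\chi}{2\varepsilon}\phi+K$ turns \eqref{SS3} into $\phi_{xx}+\tau\phi=-\frac{aK}{D}$, so that $\psi:=\phi+\frac{aK}{\tau D}$ solves $\psi_{xx}+\tau\psi=0$ and, for $\tau>0$, is a sinusoid. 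This leaves the seven unknowns $A$, $B$, the two sinusoid constants, $\bar x$, $\bar y$, $K$, to be fixed by the $C^2$-matching of $\phi$ at $\bar x$ and at $\bar y$ (six scalar conditions) together with the mass constraint \eqref{def_mass} (one condition), consistently with the parameter count in the preliminary discussion above.

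The mechanism is as follows. Matching $\phi_{xx}$ across each free boundary, together with continuity of $\phi$ and the algebraic identity $\frac{b}{D}+\tau=\frac{a\chi}{2\varepsilon D}$, gives $\phi(\bar x)=\phi(\bar y)=-\frac{2\varepsilon K}{\chi}$, i.e. $\rho(\bar x)=\rho(\bar y)=0$: the density is continuous and vanishes at the edges of the bump. Hence $\psi$ takes the same value at $\bar x$ and at $\bar y$; since $K\ne 0$ for a genuine bump, $\psi$ is a nonzero sinusoid, and a short discussion of $\cos(\sqrt\tau(\bar x-\theta))=\cos(\sqrt\tau(\bar y-\theta))$ applies. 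The ``periodic'' branch is ruled out because it would force $\phi'(\bar x)=\phi'(\bar y)$, which is incompatible with the opposite-sign logarithmic derivatives $\phi'(\bar x)/\phi(\bar x)=\sqrt{b/D}\tanh(\sqrt{b/D}\,\bar x)>0$ and $\phi'(\bar y)/\phi(\bar y)=\sqrt{b/D}\tanh(\sqrt{b/D}\,(\bar y-L))<0$ read off from the outer pieces; the other branch shows $\psi$ is symmetric about $\frac{\bar x+\bar y}{2}$, whence $\phi'(\bar y)=-\phi'(\bar x)$. Combining $\phi'(\bar y)=-\phi'(\bar x)$ with $\phi(\bar x)=\phi(\bar y)$ and the same two logarithmic-derivative relations, injectivity of $\tanh$ yields $\bar y=L-\bar x$. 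This is why the inner profile in \eqref{centered:phi} can be centered at $L/2$; reading off the remaining continuity of $\phi$ and $\phi'$ then fixes $A=B$ and the two sinusoid constants, producing \eqref{centered:phi}, and the leftover matching of $\phi'$ at $\bar x$ becomes exactly the transcendental equation \eqref{interface_centered}.

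It remains to solve \eqref{interface_centered} and to rule out spurious roots. With $g(x):=\tanh(\sqrt{b/D}\,x)-\sqrt{b/(\tau D)}\tan(\sqrt\tau(x-L/2))$ one computes $g'(x)=\sqrt{b/D}\bigl(\operatorname{sech}^2(\sqrt{b/D}\,x)-\sec^2(\sqrt\tau(x-L/2))\bigr)\le 0$, so $g$ is strictly decreasing between consecutive poles of the tangent. On the interval $\bigl(\tfrac{L}{2}-\tfrac{\pi}{\sqrt\tau},\tfrac{L}{2}-\tfrac{\pi}{2\sqrt\tau}\bigr)$, which lies in $(0,L/2)$ precisely because $L>\tfrac{2\pi}{\sqrt\tau}$, $g$ is positive at the left endpoint (where the tangent vanishes) and tends to $-\infty$ at the right endpoint, so it has a unique zero $\bar x$ there, with $\bar y=L-\bar x\in(L/2,L)$. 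Substituting back one finds $\rho(x)=\tfrac{bK}{\tau D}\bigl(\tfrac{\cos(\sqrt\tau(x-L/2))}{\cos(\sqrt\tau(\bar x-L/2))}-1\bigr)$ with $\sqrt\tau(\bar x-L/2)\in(-\pi,-\pi/2)$, hence $\cos(\sqrt\tau(\bar x-L/2))<0$ and, from the mass identity, $K<0$; the cosine ratio is then $\le 1$ on $[\bar x,\bar y]$, so $\rho\ge 0$ with equality only at $\bar x,\bar y$, and integrating this expression gives \eqref{K}. Any root of \eqref{interface_centered} in a farther positivity branch of the tangent makes $|\sqrt\tau(\bar x-L/2)|>\tfrac{3\pi}{2}$, so the cosine ratio exceeds $1$ somewhere inside the bump and $\rho$ changes sign; branches where the tangent is negative give no root, since there $\tanh>0$ while the right-hand side is negative. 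This settles uniqueness, as well as non-existence when $\tau>0$ but $L\le\tfrac{2\pi}{\sqrt\tau}$, in which case $g$ has no zero in $(0,L/2)$ on the relevant branch. For $\tau\le 0$ the very same computation leads, exactly as in Proposition~\ref{lat}, to an interface equation equating a negative and a positive quantity, so no solution of the form \eqref{CB:rho} exists.

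I expect the main obstacle to be not a single hard step but the careful bookkeeping: carrying the six matching conditions and the mass relation through to the closed forms \eqref{centered:phi}--\eqref{K}, and, the genuinely delicate point, using the sign of $K$ and the positivity of $\rho$ to single out the admissible root of \eqref{interface_centered} among the infinitely many roots that may exist when $L$ is large, together with the argument sketched above that forces the symmetry $\bar y=L-\bar x$ rather than assuming it.
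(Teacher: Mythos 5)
Your proposal is correct and follows essentially the same route as the paper: solve the ODE for $\phi$ piecewise, impose $C^2$ matching at the two free boundaries to force the symmetry $\bar y=L-\bar x$, reduce to the transcendental equation \eqref{interface_centered}, and select the unique root compatible with a nonnegative density. The only differences are presentational — you derive the symmetry from $\rho(\bar x)=\rho(\bar y)=0$ and the signs of the logarithmic derivatives of the outer $\cosh$ profiles rather than from the explicit pair of matching equations, and you spell out the sign of $K$ and the exclusion of spurious roots in somewhat more detail than the paper, which simply refers back to the lateral-bump analysis.
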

\begin{proof}
We follow the same computations as in the case of a lateral bump of Subsection \ref{lateral}. Under the assumption $\tau>0$ and using boundary conditions  \eqref{boundary_conditions},   the concentration $\phi$ can be written as 
\begin{equation*}
\phi(x)=\left\{\begin{array}{ll}
C_{1}\cosh(\sqrt{\frac{b}{D}}x),&\textrm{ for }  x\in[0,\bar{x}),\medskip\\
\ds C_{2}\cos(\sqrt{\tau}x)+C_{3}\sin(\sqrt{\tau}x)-\frac{aK}{\tau D},&\textrm{ for } x\in[\bar{x},\bar{y}],\medskip\\
C_{4}\cosh(\sqrt{\frac{b}{D}}(x-L)),&\textrm{ for } x\in(\bar{y},L].
\end{array}\right.
\end{equation*}
Solving the system of three equations given by the
continuity of $\phi$, $\phi_{x}$ and $\phi_{xx}$ at the interface point $\bar{x}$ (resp. $\bar{y}$) gives
an expression for the three constants $C_{1}$ (resp. $C_{4}$), $C_{2}$ and $C_{3}$ as a function of $\bar x$ (resp. $\bar y$). Therefore, we obtain two expressions  for $C_{2}$ and $C_{3}$, which give us two non linear equations for $\bar x$ and $\bar y$, namely: 
\begin{equation*}
\left\{
\begin{aligned}
 \sin(\sqrt{\tau}\bar{x})\tanh(\sqrt{\frac{b}{D}}\bar{x})-\sin(\sqrt{\tau}\bar{y})\tanh(\sqrt{\frac{b}{D}}(\bar{y}-L))=&\sqrt{\frac{b}{D\tau}}\bigl(\cos(\sqrt{\tau}\bar{y})\\
 &\quad -\cos(\sqrt{\tau}\bar{x})\bigr),\\
 \cos(\sqrt{\tau}\bar{x})\tanh(\sqrt{\frac{b}{D}}\bar{x})-\cos(\sqrt{\tau}\bar{y})\tanh(\sqrt{\frac{b}{D}}(\bar{y}-L))=&\sqrt{\frac{b}{D\tau}}\bigl(\sin(\sqrt{\tau}\bar{x})\\
 &\quad -\sin(\sqrt{\tau}\bar{y})\bigr).
\end{aligned}
\right.
\end{equation*}
This system can be easily rewritten as 
{\small
\begin{equation}\label{equations_x_symmetry}
\left\{
\begin{array}{l}
\ds\tanh(\sqrt{\frac{b}{D}}\bar{x})=\tanh(\sqrt{\frac{b}{D}}(\bar{y}-L))\cos(\sqrt{\tau}(\bar{y}-\bar{x}))-\sqrt{\frac{b}{D\tau}}\sin(\sqrt{\tau}(\bar{y}-\bar{x})),\\
\ds\sqrt{\frac{b}{D\tau}}=\tanh(\sqrt{\frac{b}{D}}(\bar{y}-L))\sin(\sqrt{\tau}(\bar{y}-\bar{x}))+\sqrt{\frac{b}{D\tau}}\cos(\sqrt{\tau}(\bar{y}-\bar{x})).
\end{array}
\right.
\end{equation}
}
From (\ref{equations_x_symmetry}) we get the relation
\begin{displaymath}
\tanh(\sqrt{\frac{b}{D}}\bar{x})=\sqrt{\frac{b}{D\tau}}\frac{\cos(\sqrt{\tau}(\bar{y}-\bar{x}))-1}{\sin(\sqrt{\tau}(\bar{y}-\bar{x}))}=-\tanh(\sqrt{\frac{b}{D}}(\bar{y}-L)),
\end{displaymath}
which implies that $\bar{y}=L-\bar{x}$ with $\bar{x}\in(0,L/2)$. Inserting this relation in equation   (\ref{equations_x_symmetry}), we find that $\bar x$
satisfies equation \eqref{interface_centered}.

Following the same analysis as in  the proof of Prop. \ref{lat}, 
equation \eqref{interface_centered} has a solution iff $\ds L>\frac{2\pi}{\sqrt{\tau}}$ and in the case when $\ds L>\frac{2\pi}{\sqrt{\tau}}$, the only solution $\bar x$   leading to a positive density is  such that $\ds\bar{x}\in \left( \frac{L}{2}-\frac{\pi}{\sqrt{\tau}},
 \frac{L}{2}-\frac{\pi}{2\sqrt{\tau}}\right)$. 

The parameter $K$ is computed thanks to the value of the mass $M$ of the density and is given by equation \eqref{K}.
\end{proof}
\begin{rem}
Computing  solutions $(\bar{x},\bar{y})$ to systems  (\ref{equations_x_symmetry})  is, in general, a non trivial task. In the case of one lateral bump, we found the conditions that guarantee the existence of $\bar{x}$ giving nonnegative density everywhere. Nevertheless, due to the nonlinearity of equation \eqref{bump:x}, we  have to solve it numerically to find the value of  $\bar{x}$. In the case of one centered bump, we find out that the solution is symmetric, which simplifies the computations. 
  The conditions for the existence of solutions can be found 
   and the explicit solution can be calculated. However, the technique we use in that  proof cannot be generalized to  a higher number of bumps. 
\end{rem}
\begin{rem}
Notice that the solution for the lateral bump calculated with $L\slash 2$ and $M\slash 2$ and symmetrized to be defined on the whole interval $[0,L]$ is equal to the one computed for the centered bump. Our computation was essentially aimed to exclude other cases. 
\end{rem}
\section{Numerical approximation}\label{sec:scheme} 

Let us now explain how to construct a reliable scheme in order to perform numerical simulations of system \eqref{eq:main_system}. As a standard guess, we can expect that solutions stabilize on  steady states. This scheme will also enable us to test the stability of the stationary solutions computed in the previous section.

System (\ref{eq:main_system}) couples equations of different natures, i.e. a  quasi-linear system of conservation laws with sources, coupled   with a linear parabolic equation for the evolution of the chemoattractant.  
%
The parabolic part   
can be approximated  using, for example, the classical explicit-implicit Crank-Nicholson method. 

Now, denoting by  $U=(\rho,\rho u)^t$ the vector of the two unknowns, density and momentum,  the   hyperbolic part of system  \eqref{eq:main_system} can be written in the following form
\begin{subequations}\label{eq:hyp_matrix}
\begin{equation}
\label{eq:hyp_matrix_system} U_{t}+F(U)_{x}=S(U),
\end{equation}
where $F$ is the flux function and $S$ the source term, i.e.
\begin{equation}
\label{eq:hyp_matrix_vectors}F(U)=
\left(
\begin{array}{c}
F^{\rho} \\
F^{\rho u}
\end{array}
\right)
=\left(
\begin{array}{c}\rho u
\\
 \rho u^2+P(\rho)
 \end{array}
\right)
 ,\quad S(U)=
 \left(
\begin{array}{c}
 0
\\ 
-\alpha\rho u+\chi\rho\phi_x
\end{array}
\right).
\end{equation}
\end{subequations}
 In this section we present a finite volume scheme for  (\ref{eq:hyp_matrix}) defined on  a  bounded domain  $[0,L]$ with no-flux boundary conditions \eqref{boundary_conditions}.
 The scheme needs  to preserve the non negativity of density and 
   all the steady states of the system. 
\subsection{Well-balanced scheme}
 According to the framework of finite volume schemes, we divide the interval $[0,L]$ into $N$ cells  $C_{i}=[x_{i-1/2},x_{i+1/2})$,  centered at nodes $x_{i}, \, 1\leq i\leq N$. In the following, we will assume, for simplicity, that all the cells have the same length $\ds \Delta x=x_{i+1/2}-x_{i-1/2}$.
   We consider as a semi-discrete approximation of the solution $U$ of system \eqref{eq:hyp_matrix} on cell $C_{i}$ an approximation of the cell average of the solution at time $t>0$, that is to  say  
\begin{equation*}
U_{i}(t)=\frac{1}{\Delta x}\int_{x_{i-1/2}}^{x_{i+1/2}} U(x, t) \, dx.
\end{equation*}
A general semi-discrete, finite volume scheme for (\ref{eq:hyp_matrix}) 
can be defined as 
\begin{equation}\label{eq:scheme_main}
\Delta x\frac{d}{dt}U_{i}(t)+\mathcal{F}_{i+1/2}(t)-\mathcal{F}_{i-1/2}(t)= S_{i}(t),
\end{equation}
where $\ds \mathcal{F}_{i+1/2}(t)$ is an approximation of the flux $F(U(x_{i+1/2},t))$ at the interface point $x_{i+1/2}$ at time $t$ and $\ds S_{i}(t)$  is an approximation of the source term $S(U)$ on the cell $C_{i}$ at time $t$.

A classical choice is to take $\mathcal{F}_{i+1/2}(t)=\mathcal{F}(U_{i}(t),U_{i+1}(t))$, where $\mathcal{F}$ is any consistent $C^{1}$ numerical flux function for the homogeneous problem $\ds  U_{t}+F(U)_{x}=0$. The numerical source is given as $S_{i}(t)=S(U_{i}(t))$, where we  discretized the derivative $\phi_{x}$ with a space centered formula $\ds \phi_{x | C_{i}}\sim \frac{\phi_{i+1}-\phi_{i-1}}{2 \Delta x }$, where  $\phi_{i\pm 1}$ is an approximation of function $\phi$ at  points $x_{i\pm 1}$. 
 However, it is known that this kind of approximation produces large errors near nonconstant steady states. 

Balancing the flux term and the source term 
increases significantly the accuracy near steady states, by imposing an exact discretization of the stationary solutions of the system.
A possible approach 
 is to calculate the flux terms  $\mathcal{F}_{i\pm 1/2}$  in \eqref{eq:scheme_main} as a function of   new  interface variables $U_{i+1/2}^{\pm}$, i.e.
 $\ds \mathcal{F}_{i+1/2}=\mathcal{F}(U_{i+1/2}^{-},U_{i+1/2}^{+})$.
These interface variables will be made precise later on and their computation  will take into account the balance between the flux term and the source.

The technique is  also  to upwind the source term, defined as
$\ds S_{i}=\mathcal{S}_{i+1/2}^{-}+\mathcal{S}_{i-1/2}^{+}$,
in the spirit of the USI method  \cite{MR1933816, Perthame_Simeoni, Katsaounis_Pertham_Simeoni, Bouchut_Ounaissa_Perthame}.
We consider   the following ansatz:
\begin{equation}\label{eq:ansatz}
\mathcal{S}_{i+1/2}^{-}=
\left(\begin{array}{c}
0 \\P\left(\rho_{i+1/2}^{-}\right)-P(\rho_{i})
\end{array}\right), \,
\mathcal{S}_{i-1/2}^{+}=\left(\begin{array}{c}
0 \\P(\rho_{i})-P\left(\rho_{i-1/2}^{+}\right)
\end{array}\right).
\end{equation} 
This ansatz is  motivated by an exact discretization of the stationary part of system \eqref{eq:hyp_matrix_system},   $\ds F(U)_{x}=S(U)$, using that, in the case of a stationary solution,  the momentum $\rho u $ vanishes thanks to boundary conditions.

Therefore, the final scheme can be written as : 
\begin{equation}\label{eq:scheme_main2}
\Delta x\frac{d}{dt}U_{i}+\mathcal{F}(U_{i+1/2}^{-},U_{i+1/2}^{+})-\mathcal{F}(U_{i-1/2}^{-},U_{i-1/2}^{+})=\mathcal{S}_{i+1/2}^{-}+\mathcal{S}_{i-1/2}^{+}.
\end{equation}
We will precise in the following subsection how to reconstruct 
the interface variables $U_{i+1/2}^{\pm}$, which contain  information about the sources.  
This will be done according to the local equilibrium 
in order to make the scheme
consistent with (\ref{eq:hyp_matrix}), preserving the non negativity of the density and preserving the steady states of (\ref{eq:hyp_matrix}) with a vanishing velocity.
\subsection{Reconstruction}
In order to complete the construction of the scheme, we  define the interface variables $U_{i+1/2}^{\pm}$. To satisfy the well-balanced property and increase the accuracy of the approximation near nonconstant steady states,  the reconstruction is obtained from the stationary system
\begin{equation}\label{eq:steady states}
\left\{\begin{array}{l}
(\rho u)_{x}=0,\\
\left(\rho u^{2}+P(\rho)\right)_{x}=-\alpha\rho u+\chi\rho\phi_{x}.
\end{array}\right.
\end{equation}

The system (\ref{eq:steady states}) can be rewritten in terms of the internal energy function $e(\rho)$ which, for a pressure law of isentropic gas dynamics (\ref{eq:pressure_law}), is defined by
$\ds e'(\rho)=\frac{P(\rho)}{\rho^{2}}$.
We  consider the function 
 \begin{equation*}
 \Psi(\rho):=e(\rho)+\frac{P(\rho)}{\rho}= \frac{\varepsilon\gamma}{\gamma-1}\rho^{\gamma-1}, \textrm{ for } \gamma>1,
 \end{equation*}
and we divide the second equation of (\ref{eq:steady states}) by $\rho$, which leads to :
\begin{equation}\label{eq:steady states2}
\left\{\begin{array}{l}
(\rho u)_{x}=0,\\
\ds \left(\frac{u^{2}}{2}+\Psi(\rho)-\chi\phi
\right)_{x}=-\alpha u.
\end{array}\right.
\end{equation}
We integrate now  the previous system  on $[x_{i},x_{i+1/2}]$  (resp.$[x_{i+1/2},x_{i+1}]$) to find  the interface variables 
\begin{equation*}
U_{i+1/2}^{-}=\left(
\begin{array}{c}
\rho_{i+1/2}^{-} \\
\rho_{i+1/2}^{-}u_{i+1/2}^{-}
\end{array}
\right)
\left(\textrm{ resp.  }
U_{i+1/2}^{+}=\left(
\begin{array}{c}
\rho_{i+1/2}^{+} \\
\rho_{i+1/2}^{+}u_{i+1/2}^{+}
\end{array}
\right)\right),
\end{equation*}
and we obtain the two following equations for  $ \rho_{i+1/2}^{-} u_{i+1/2}^{-}$ and $\rho_{i+1/2}^{-}$: 
\begin{subequations}
\begin{equation}\label{eq:integration_momentum}
\rho_{i+1/2}^{-}u_{i+1/2}^{-}=\rho_{i}u_{i}
\end{equation}
and  
\begin{equation}\label{eq:integration_general}
\frac{\varepsilon\gamma}{\gamma-1}\left(\rho_{i+1/2}^{-}\right)^{\gamma+1}+C_{i}\ds (\rho_{i+1/2}^{-})^{2}
+\frac{1}{2}\rho_{i}^2u_{i}^2=0,
\end{equation}
\end{subequations}
using \eqref{eq:integration_momentum}, with
\begin{displaymath}
C_{i}=-\frac{\varepsilon\gamma}{\gamma-1}\rho_{i}^{\gamma-1}+\chi(\phi_{i}-\phi_{i+1/2})-\frac{1}{2}u_{i}^{2}+ \alpha\int_{x_{i}}^{x_{i+1/2}}u(x) \, dx.
\end{displaymath}

For an integer $\gamma>1$,  equation \eqref{eq:integration_general}  is a polynomial of order larger than two. The main difficulty in the reconstruction lies in finding its roots and checking that the form of $U_{i+1/2}^{\pm}$ leads to a consistent scheme that preserves the non negativity of the density. Audusse et al. in \cite{Audusse} introduced the hydrostatic reconstruction for shallow water equations,  assuming  that the velocity is zero at the steady states. This hypothesis  simplifies equation (\ref{eq:integration_general}) such that an explicit solution can be found. This method generates  a scheme, which is well-balanced at equilibria with vanishing velocity. Remark that, for a  system of type  (\ref{eq:main_system}) set on a bounded domain with no-flux boundary conditions  \eqref{boundary_conditions},
 such equilibria are the only possible stationary solutions
  of the system. 

However, in the quasilinear model of chemotaxis (\ref{eq:main_system}), the source in the momentum balance equation contains a damping term together with a chemotaxis term. The assumption $u=0$ at a steady state cancels the friction term  in  the reconstruction and, to satisfy the  consistency property,  this term has to be added separately in the discretization of   (\ref{eq:hyp_matrix}),  namely :
\begin{displaymath}
\Delta x\frac{d}{dt}U_{i}+\mathcal{F}_{i+1/2}-\mathcal{F}_{i-1/2}=\mathcal{S}_{i+1/2}^{-}+\mathcal{S}_{i-1/2}^{+}+\Delta x\left(\begin{array}{c}
0\\
-\alpha\rho_i u_i
\end{array}\right).
\end{displaymath}
This approach was used by Filbet and Shu in \cite{Filbet_Shu} in the case of a linear pressure function, i.e.  $\gamma=1$. In order to include the friction term into the reconstruction,  we define the new interface variables taking into account stationary solutions with constant, instead of vanishing, velocity. This allows to deal with the damping term together with the chemotaxis term and the flux term, but also to simplify   equation (\ref{eq:integration_general}).  
Therefore, considering a velocity  constant  in space and  integrating equations (\ref{eq:steady states2}), we obtain  the two following relations for  $ u_{i+1/2}^{-}$ and $\rho_{i+1/2}^{-}$: 
\begin{subequations}
\begin{equation}\label{reconstruction_u}
u_{i+1/2}^{-}=u_{i}
\end{equation}
and
\begin{equation}\label{reconstruction_rho_partial}
\Psi(\rho_{i+1/2}^{-})-\Psi(\rho_{i})=-\alpha\int_{x_{i}}^{x_{i+1/2}}u(x) \, dx+\chi(\phi_{i+1/2}-\phi_{i}). 
\end{equation}
\end{subequations}
Remark that  for $\gamma>1$ the function $\rho \to \Psi(\rho)$ is strictly increasing and continuous on $[0, +\infty)$ with 	a  finite value at $0$.
So, there exists an inverse function $\Psi^{-1}$, which enables us to find a solution to this last equation.

 It remains now  to explain how to discretize the integral in \eqref{reconstruction_rho_partial} and how to find the approximation $\phi_{i+1/2}$.
The integral in (\ref{reconstruction_rho_partial}) can be discretized by any consistent method,  for example
\begin{displaymath}
-\alpha\int_{x_{i}}^{x_{i+1/2}}u(x) \, dx\approx-\alpha \Delta x (u_{i})_{+}, 
\end{displaymath}
where 
$X_{+}=\max(0,X)$, $X_{-}=\min(0,X)$. 
The computation of $\phi_{i+1/2}$  is not completely plain. The values $\phi_{i}$, which approximate the function $\phi$ at points $x_{i}$, are easily  computed thanks to the parabolic equation for $\phi$ and we use these values to calculate  $\phi_{i+1/2}$.  In order to preserve the non negativity of the density $\rho$, we take $\phi_{i+1/2}=\min(\phi_{i},\phi_{i+1})$. Other choices, as for instance taking the average between $\phi_{i}$ and $\phi_{i+1}$, do not guarantee this property. 

In conclusion,  the reconstruction of the densities becomes
\begin{equation}\label{reconstruction_rho}
\left\{\begin{array}{l}
\rho_{i+1/2}^{-}=\Psi^{-1}\Big(\left[\Psi(\rho_{i})-\alpha \Delta x(u_{i})_{+}+\chi(\min(\phi_{i},\phi_{i+1})-\phi_{i})\right]_{+}\Big),\\
\rho_{i+1/2}^{+}=\Psi^{-1}\Big(\left[\Psi(\rho_{i+1})+\alpha \Delta x (u_{i+1})_{-}+\chi(\min(\phi_{i},\phi_{i+1})-\phi_{i+1})\right]_{+}\Big),
\end{array}\right.
\end{equation}
where the positivity-preserving truncations 
 guarantee the non negativity of $\rho$.

\subsection{Properties of the semi-discrete scheme}
In the following theorem,  we prove some  properties of the  semi-discrete scheme defined by equations (\ref{eq:scheme_main2})-(\ref{eq:ansatz}) with the reconstruction (\ref{reconstruction_u})-(\ref{reconstruction_rho}).
\begin{thm}\label{thm}
Let us consider the system \eqref{eq:hyp_matrix}  set on the interval $[0,L]$ with boundary conditions  \eqref{boundary_conditions}.
Let $\mathcal{F}=(\mathcal{F}^{\rho},\mathcal{F}^{\rho u })^t$ be a consistent, $\mathcal{C}^{1}$  numerical flux preserving the non negativity of $\rho$ for the homogeneous part of system \eqref{eq:hyp_matrix}, $\ds U_{t}+F(U)_{x}=0$. The finite volume scheme \eqref{eq:scheme_main2}-\eqref{eq:ansatz}
with the reconstruction \eqref{reconstruction_u}-\eqref{reconstruction_rho}:
\begin{enumerate}[label=(\roman{*})]
\item is consistent with \eqref{eq:hyp_matrix} away from the vacuum, 
\item  preserves the non negativity of $\rho$, 
\item preserves the steady states given by \eqref{eq:steady states2} with a vanishing velocity.
\end{enumerate}
%
%
\end{thm}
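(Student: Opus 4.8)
The three assertions are proved separately, each from the explicit form of the reconstruction \eqref{reconstruction_rho}; two structural facts are used throughout: $\rho\mapsto\Psi(\rho)=\frac{\varepsilon\gamma}{\gamma-1}\rho^{\gamma-1}$ is a continuous strictly increasing bijection of $[0,+\infty)$ onto itself with $\Psi(0)=0$ (so $\Psi^{-1}$ is well defined on $[0,+\infty)$ and valued in $[0,+\infty)$), and $\Psi'(\rho)=P'(\rho)/\rho$, which is immediate from $e'(\rho)=P(\rho)/\rho^{2}$. For \emph{(i)}, I would feed the point values $\rho_i=\rho(x_i)$, $(\rho u)_i=(\rho u)(x_i)$, $\phi_i=\phi(x_i)$ of a smooth solution of \eqref{eq:main_system} into the scheme and Taylor expand. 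Away from vacuum, $\Psi$ is a $\mathcal{C}^1$ diffeomorphism near $\rho_i$ and both correction terms in \eqref{reconstruction_rho} are $O(\Delta x)$, so $\rho_{i+1/2}^{-}=\rho_i+O(\Delta x)$, $u_{i+1/2}^{-}=u_i$, and likewise on the $+$ side; hence $U_{i+1/2}^{-}$ and $U_{i+1/2}^{+}$ are $O(\Delta x)$-close to each other and to $U(x_{i+1/2})$. Since $\mathcal{F}$ is consistent and $\mathcal{C}^1$, $\mathcal{F}(U_{i+1/2}^{-},U_{i+1/2}^{+})$ reproduces $F(U(x_{i+1/2}))$ up to an $O(\Delta x)$ defect that is Lipschitz in $i$, so the flux difference equals $\Delta x\,F(U)_x(x_i)+o(\Delta x)$. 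For the source, combining \eqref{reconstruction_rho_partial} with the mean value theorem and $\Psi'=P'/\rho$ gives $P(\rho_{i+1/2}^{-})-P(\rho_i)=\rho(x_i)\bigl(-\alpha\Delta x(u_i)_{+}+\chi(\phi_{i+1/2}-\phi_i)\bigr)+o(\Delta x)$, and symmetrically $P(\rho_i)-P(\rho_{i-1/2}^{+})=\rho(x_i)\bigl(-\alpha\Delta x(u_i)_{-}+\chi(\phi_i-\phi_{i-1/2})\bigr)+o(\Delta x)$; summing, and using $(u_i)_{+}+(u_i)_{-}=u_i$ and $\phi_{i+1/2}-\phi_{i-1/2}=\Delta x\,\phi_x(x_i)+o(\Delta x)$, the upwinded source $\mathcal{S}_{i+1/2}^{-}+\mathcal{S}_{i-1/2}^{+}$ equals $\Delta x\,S(U(x_i))+o(\Delta x)$, which is the claimed consistency.

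For \emph{(ii)}, the truncations $[\,\cdot\,]_{+}$ in \eqref{reconstruction_rho} and the range of $\Psi^{-1}$ force $\rho_{i+1/2}^{\pm}\ge0$ unconditionally. If moreover $\rho_i=0$, then $\Psi(\rho_i)=0$ while both correction terms are $\le0$ (since $-\alpha\Delta x(u_i)_{+}\le0$ and $\min(\phi_i,\phi_{i\pm1})-\phi_i\le0$), so $\rho_{i+1/2}^{-}=\rho_{i-1/2}^{+}=0$, i.e. $U_{i+1/2}^{-}=U_{i-1/2}^{+}=(0,0)^{t}$. The assumption that $\mathcal{F}$ preserves non-negativity of $\rho$ is equivalent — testing against a vacuum neighbour and using $\mathcal{F}^{\rho}(V,V)=F^{\rho}(V)=0$ when $V$ has zero density — to the one-sided sign conditions $\mathcal{F}^{\rho}(U_L,U_R)\le0$ when $\rho_L=0$ and $\mathcal{F}^{\rho}(U_L,U_R)\ge0$ when $\rho_R=0$. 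Applying these to $\mathcal{F}^{\rho}_{i+1/2}$ (vacuum left state) and to $\mathcal{F}^{\rho}_{i-1/2}$ (vacuum right state), and recalling that the first component of the ansatz \eqref{eq:ansatz} vanishes, the first component of \eqref{eq:scheme_main2} gives $\Delta x\,\frac{d}{dt}\rho_i=-\mathcal{F}^{\rho}_{i+1/2}+\mathcal{F}^{\rho}_{i-1/2}\ge0$ at every cell with $\rho_i=0$, which is the invariance of $\{\rho\ge0\}$.

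For \emph{(iii)}, take a discrete state with $u_i=0$ for all $i$ and $\Psi(\rho_i)-\chi\phi_i\equiv C$, the natural discretization of \eqref{eq:steady states2} with vanishing velocity. Then the friction term in \eqref{reconstruction_rho} disappears and, using $\Psi(\rho_i)=C+\chi\phi_i$, one gets $\Psi(\rho_{i+1/2}^{-})=\bigl[\Psi(\rho_i)+\chi(\min(\phi_i,\phi_{i+1})-\phi_i)\bigr]_{+}=\bigl[C+\chi\min(\phi_i,\phi_{i+1})\bigr]_{+}$, and an identical computation gives the same value for $\Psi(\rho_{i+1/2}^{+})$; hence $\rho_{i+1/2}^{-}=\rho_{i+1/2}^{+}=:\rho_{i+1/2}$ and $U_{i+1/2}^{-}=U_{i+1/2}^{+}=(\rho_{i+1/2},0)^{t}$. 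Consistency of $\mathcal{F}$ then yields $\mathcal{F}(U_{i+1/2}^{-},U_{i+1/2}^{+})=F\bigl((\rho_{i+1/2},0)^{t}\bigr)=(0,P(\rho_{i+1/2}))^{t}$, so $\Delta x\,\frac{d}{dt}\rho_i=0$; and in the second component the flux difference $P(\rho_{i+1/2})-P(\rho_{i-1/2})$ is cancelled exactly by the momentum part of the upwinded source, namely $\bigl(P(\rho_{i+1/2}^{-})-P(\rho_i)\bigr)+\bigl(P(\rho_i)-P(\rho_{i-1/2}^{+})\bigr)=P(\rho_{i+1/2})-P(\rho_{i-1/2})$, so $\frac{d}{dt}(\rho u)_i=0$ as well, and the discrete equilibrium is preserved.

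Parts \emph{(ii)} and \emph{(iii)} are essentially algebraic identities, robust to all parameter choices; the delicate step is \emph{(i)}. There one must propagate the $O(\Delta x)$ defect of the reconstruction through a flux that is only $\mathcal{C}^1$ (not $\mathcal{C}^2$), control the non-smoothness of $i\mapsto\min(\phi_i,\phi_{i+1})$ near critical points of $\phi$, and cope with the fact that $\Psi^{-1}$ is merely Hölder near $\rho=0$ when $\gamma\neq2$ — which is exactly why consistency is asserted only away from the vacuum. Organizing the computation around the identity $P'=\rho\,\Psi'$ is what makes the friction and chemotactic parts of $S(U)$ appear automatically from the reconstruction, avoiding a separate fractional step for the damping as in \cite{Filbet_Shu}.
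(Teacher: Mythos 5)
Your proof is correct and follows essentially the same route as the paper: Taylor expansion of the reconstructed interface states for consistency, the observation that $\rho_i=0$ forces $\rho_{i+1/2}^{-}=\rho_{i-1/2}^{+}=0$ so that non-negativity is inherited from the homogeneous solver, and the identity $U_{i+1/2}^{-}=U_{i+1/2}^{+}$ at discrete equilibria so that flux and source cancel exactly. The only (immaterial) difference is that you verify source consistency on the cell sum $\mathcal{S}_{i+1/2}^{-}+\mathcal{S}_{i-1/2}^{+}$ via the identity $P'=\rho\Psi'$, while the paper checks the Perthame--Simeoni interface sum $\mathcal{S}_{i+1/2}^{-}+\mathcal{S}_{i+1/2}^{+}$ by direct expansion.
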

\begin{proof}
$(i)$ To prove the  consistency  of the numerical scheme with  system \eqref{eq:hyp_matrix}, 
we first need to show the consistency of the flux term, i.e.
\begin{displaymath}
\forall i=1,2,...,N\qquad \lim_{U_{i},U_{i+1}\rightarrow U, \, \Delta x \to 0}\mathcal{F}\left(U_{i+1/2}^{-},U_{i+1/2}^{+}\right)=F(U).
\end{displaymath}
This is straightforward using Taylor expansions, since 
\begin{equation*}
 U_{i+1/2}^{-}=U_{i}+O(\Delta x)\textrm{ and }  U_{i+1/2}^{+} =U_{i+1}+O(\Delta x),
\end{equation*}
and, therefore, 
\begin{equation*}
\mathcal{F}\left(U_{i+1/2}^{-},U_{i+1/2}^{+}\right)=\mathcal{F}\left(U_{i},U_{i+1}\right)+O(\Delta x).
\end{equation*}
The consistency  finally comes from the consistency of the numerical flux $\mathcal{F}$  with the analytical flux $F$, i.e. $\ds \mathcal{F}(U,U)=F(U)$.

We have now to prove the consistency of  the discretization of the  source term. To do so,  we use the definition given by Perthame and Simeoni in \cite{Perthame_Simeoni} and  we show that
\begin{displaymath}
\forall i=1,2,...,N\qquad \lim_{U_{i},U_{i+1}\rightarrow U, \, \Delta x\to 0}\frac{1}{\Delta x}\left(\mathcal{S}_{i+1/2}^{-}+\mathcal{S}_{i+1/2}^{+}\right)=S(U).
\end{displaymath} 
Using equations \eqref{eq:ansatz}, we find that  $$
\mathcal{S}_{i+1/2}^{-}+\mathcal{S}_{i+1/2}^{+}
=\left(
\begin{array}{c}
0 \\
P(\rho_{i+1/2}^{-})-P(\rho_{i})+P(\rho_{i+1})-P(\rho_{i+1/2}^{+})\end{array}\right).$$
We use  now Taylor expansions of   $P(\rho_{i+1/2}^{\pm})$. We consider  the density   away from vacuum, so that, for $\Delta x$ small enough,  the positivity-preserving truncations in the reconstruction (\ref{reconstruction_rho}) can be omitted and  we obtain
\begin{align*}
&P(\rho_{i+1/2}^{-})-P(\rho_{i+1/2}^{+})=P(\rho_{i})+\rho_{i}(-\alpha(u_{i})_{+}+\frac{\chi}{2}(\phi_{x,i}-|\phi_{x,i}|))\Delta x\\
&\qquad -P(\rho_{i+1})-\rho_{i+1}(\alpha(u_{i+1})_{-}-\frac{\chi}{2}(\phi_{x,i+1}+|\phi_{x,i+1}|))\Delta x+O(\Delta x^2)
\end{align*}
which proves $(i)$.

$(ii)$  We write the first component of scheme \eqref{eq:scheme_main2} :
\begin{displaymath}
\Delta x\frac{d}{dt}\rho_{i}(t)+\mathcal{F}^{\rho}(U_{i+1/2}^{-},U_{i+1/2}^{+})-\mathcal{F}^{\rho}(U_{i-1/2}^{-},U_{i-1/2}^{+})=0.
\end{displaymath}
 To prove the conservation of the positivity of the density for our scheme, let us show
 that, whenever $\rho_{i}(t)$ vanishes,  the inequality 
 \begin{displaymath}
\mathcal{F}^{\rho}(U_{i+1/2}^{-},U_{i+1/2}^{+})-\mathcal{F}^{\rho}(U_{i-1/2}^{-},U_{i-1/2}^{+})\leq 0,
 \end{displaymath} 
  holds true. Considering separately the two  cases $\phi_{i}>\phi_{i+1}$ and $\phi_{i}<\phi_{i+1}$, we can prove that, if $\rho_{i}=0$, then 
   \begin{equation*}
 \Psi(\rho_{i+1/2}^{-})=\left[\Psi(\rho_{i})-\alpha(u_{i})_{+}\Delta x+\chi(\min(\phi_{i},\phi_{i+1})-\phi_{i})\right]_{+}=0
  \end{equation*} 
  which implies that  $\rho_{i+1/2}^{-}=0$. In the same way, we can prove that  if  $\rho_{i}=0$, then   $\rho_{i-1/2}^{+}=0$.
   Since the numerical flux $\mathcal{F}$  preserves the non negativity of $\rho$ for the homogeneous part of system \eqref{eq:hyp_matrix},  we have
\begin{displaymath}
\mathcal{F}^{\rho}(U_{i},U_{i+1})-\mathcal{F}^{\rho}(U_{i-1},U_{i})\leq 0,
 \end{displaymath}
   whenever  $\rho_{i}(t)$ vanishes, which completes the proof of $(ii)$.
%

$(iii)$. 
We consider a discrete version of the stationary solutions defined by  \eqref{eq:steady states2} with a vanishing velocity, satisfying therefore
\begin{equation}\label{eq:steady states discrete}
\left\{\begin{array}{l}
u_{i}=0,\\
\Psi(\rho_{i+1})-\chi\phi_{i+1}=\Psi(\rho_{i})-\chi\phi_{i}.
\end{array}\right.
\end{equation}
From equations  \eqref{reconstruction_rho} and  \eqref{eq:steady states discrete}, we can see  easily that, in that case,  
$\ds U_{i+1/2}^{-}=U_{i+1/2}^{+}$. 
Stationary solutions \eqref{eq:steady states discrete} are preserved by  scheme \eqref{eq:scheme_main2} iff
\begin{equation*}
\mathcal{F}(U_{i+1/2}^{-},U_{i+1/2}^{+})-\mathcal{F}(U_{i-1/2}^{-},U_{i-1/2}^{+})-\mathcal{S}_{i+1/2}^{-}-\mathcal{S}_{i-1/2}^{+}=0,
\end{equation*}
which is clearly true using the consistency of $\mathcal{F}$ and equations \eqref{eq:hyp_matrix_vectors} and \eqref{eq:ansatz}.
%
%
\end{proof}
\subsection{Properties of the fully discrete scheme}
Let us now consider a time discretization of system \eqref{eq:hyp_matrix} with a given time step $\Delta t$ and  discretization times $\ds t^n=n\Delta t, \, n \in \mathbb{N}$.
Using a standard approximation of the time derivative in scheme \eqref{eq:scheme_main2}, the fully discrete scheme can be written as 
\begin{equation}\label{eq:scheme_fully}
\begin{split}
U_{i}^{n+1}=U_{i}^{n}-\frac{\Delta t}{\Delta x }\bigl(\mathcal{F}(U_{i+1/2}^{n,-},U_{i+1/2}^{n,+})
&-\mathcal{F}(U_{i-1/2}^{n,-},U_{i-1/2}^{n,+})
\bigr)\\
& +\frac{\Delta t}{\Delta x }\left(\mathcal{S}_{i+1/2}^{n,-}+\mathcal{S}_{i-1/2}^{n,+}\right),
\end{split}
\end{equation}
where $U_{i}^{n}$ is an approximation of  the solution $U$ of system \eqref{eq:hyp_matrix} on cell $C_{i}$  at time $t^n$,  $U_{i+1/2}^{n, \pm}$ are  the values of the  interface variables at time $t^n$, and
 \begin{equation}\label{eq:ansatz_full}
\mathcal{S}_{i+1/2}^{n,-}=
\left(\begin{array}{c}
0 \\P\left(\rho_{i+1/2}^{n,-}\right)-P(\rho_{i}^n)
\end{array}\right), \,
\mathcal{S}_{i-1/2}^{n,+}=\left(\begin{array}{c}
0 \\P(\rho_{i}^n)-P\left(\rho_{i-1/2}^{n,+}\right)
\end{array}\right).
\end{equation} 
We can easily see  that the time integration preserves two of the properties proved in Theorem \ref{thm}, namely 
the consistency  and the conservation of the stationary solutions.  However,  we have to find a suitable  stability condition for the scheme, that is to say the relation between  the space step and the time step to preserve the non negativity of the density. To establish it, we use the notion of invariant domain by interface given in \cite{Bouchut_book} and 
we follow the proof presented in \cite{Audusse}. Indeed,  proving directly that the scheme \eqref{eq:scheme_fully} preserves a convex domain, such as the positive half-plane,   is a hard task since the stencil of scheme \eqref{eq:scheme_fully} is composed of three points. To simplify the computations, we consider the weaker notion of preservation of the domain by interface which  consists in  proving two inequalities involving two points each. However, it is proved in   \cite{Bouchut_book} that these two notions are equivalent under  a slightly more restrictive stability condition linking the time step, the space step and a numerical velocity.

We first  give the definition of a solver  preserving the non negativity by interface for a fully discrete scheme, in the case of a homogeneous system. 
\begin{defn}\label{def}
A solver $\ds\mathcal{F}=(\mathcal{F}^{\rho},\mathcal{F}^{\rho u })^t$ for the  homogeneous system $\ds U_{t}+F(U)_{x}=0 $  preserves the non negativity of $\rho$ by interface with a numerical speed $\sigma(U_{i}^n,U_{i+1}^n)\geq 0$ if
whenever the CFL stability condition 
\begin{displaymath}
\sigma(U_{i}^n,U_{i+1}^n)\Delta t\leq\Delta x
\end{displaymath}
holds, we have
\begin{equation*}
\left\{\begin{array}{l}
\rho_{i}^n-\ds\frac{\Delta t}{\Delta x}\left(\mathcal{F}^{\rho}(U_{i}^n,U_{i+1}^n)-\rho_{i}^n u_{i}^n\right)\geq 0,\medskip\\
\rho_{i+1}^n-\ds\frac{\Delta t}{\Delta x}\left(\rho_{i+1}^n u_{i+1}^n-\mathcal{F}^{\rho}(U_{i}^n,U_{i+1}^n)\right)\geq 0.
\end{array}  \right.
\end{equation*} 
\end{defn}
Let us assume that the numerical flux we use for the flux discretization preserves non negativity by interface. The following proposition gives the stability condition to conserve this property in the case  of  system \eqref{eq:hyp_matrix} with source term.
\begin{prop}
Let us assume that the homogeneous flux $\mathcal{F}$ preserves the non negativity of $\rho$ by interface. Then the fully discrete scheme
\eqref{eq:scheme_fully}
with the reconstruction at interfaces given by  \eqref{reconstruction_u}-\eqref{reconstruction_rho} preserves the non negativity of $\rho$ by interface, which means than whenever the CFL  stability  condition
\begin{equation}\label{WB:prop_fully_discrete_CFL}
\sigma(U_{i+1/2}^{n,-},U_{i+1/2}^{n,+})\Delta t\leq\Delta x
\end{equation}
holds, we have
\begin{equation}\label{WB:prop_fully_discrete1}
\left\{\begin{array}{l}
\rho_{i}^n-\ds\frac{\Delta t}{\Delta x}\left(\mathcal{F}^{\rho}(U_{i+1/2}^{n,-},U_{i+1/2}^{n,+})-\rho_{i}^n u_{i}^n\right)\geq 0,
\medskip
\\
\rho_{i+1}^n-\ds\frac{\Delta t}{\Delta x}\left(\rho_{i+1}^n u_{i+1}^n-\mathcal{F}^{\rho}(U_{i+1/2}^{n,-},U_{i+1/2}^{n,+})\right)\geq 0.
\end{array}\right.
\end{equation} 
\end{prop}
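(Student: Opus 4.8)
The plan is to reduce the statement with source to the hypothesis, which says that the \emph{homogeneous} flux $\mathcal{F}$ preserves non negativity of $\rho$ by interface in the sense of Definition \ref{def}. First I would observe that, by construction \eqref{reconstruction_u}, the interface variables satisfy $u_{i+1/2}^{n,-}=u_i^n$ and $u_{i+1/2}^{n,+}=u_{i+1}^n$, so the physical momenta at the interface are $\rho_{i+1/2}^{n,-}u_i^n$ and $\rho_{i+1/2}^{n,+}u_{i+1}^n$. Applying Definition \ref{def} to the \emph{pair} $(U_{i+1/2}^{n,-},U_{i+1/2}^{n,+})$ — rather than to $(U_i^n,U_{i+1}^n)$ — under the CFL condition \eqref{WB:prop_fully_discrete_CFL} gives
\begin{equation*}
\left\{\begin{array}{l}
\rho_{i+1/2}^{n,-}-\ds\frac{\Delta t}{\Delta x}\left(\mathcal{F}^{\rho}(U_{i+1/2}^{n,-},U_{i+1/2}^{n,+})-\rho_{i+1/2}^{n,-}u_i^n\right)\geq 0,\medskip\\
\rho_{i+1/2}^{n,+}-\ds\frac{\Delta t}{\Delta x}\left(\rho_{i+1/2}^{n,+}u_{i+1}^n-\mathcal{F}^{\rho}(U_{i+1/2}^{n,-},U_{i+1/2}^{n,+})\right)\geq 0.
\end{array}\right.
\end{equation*}
These are almost the desired inequalities \eqref{WB:prop_fully_discrete1}, except that $\rho_{i+1/2}^{n,\pm}$ appears where $\rho_i^n$, $\rho_{i+1}^n$ should. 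So the whole argument hinges on comparing the reconstructed density to the cell value.

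The key step is therefore the monotonicity of the reconstruction: I claim $\rho_{i+1/2}^{n,-}\leq \rho_i^n$ and $\rho_{i+1/2}^{n,+}\leq\rho_{i+1}^n$. For the first, recall from \eqref{reconstruction_rho} that $\rho_{i+1/2}^{n,-}=\Psi^{-1}\big([\Psi(\rho_i^n)-\alpha\Delta x(u_i^n)_+ +\chi(\min(\phi_i^n,\phi_{i+1}^n)-\phi_i^n)]_+\big)$. Since $\alpha\Delta x(u_i^n)_+\geq 0$ and $\min(\phi_i^n,\phi_{i+1}^n)-\phi_i^n\leq 0$ (this is precisely the reason the paper chose $\phi_{i+1/2}=\min(\phi_i,\phi_{i+1})$), the bracketed argument is $\leq\Psi(\rho_i^n)$; truncating with $[\,\cdot\,]_+$ keeps it $\leq\Psi(\rho_i^n)$ as well since $\Psi(\rho_i^n)\geq 0$; and $\Psi^{-1}$ is increasing because $\Psi$ is strictly increasing on $[0,+\infty)$, as noted after \eqref{reconstruction_rho_partial}. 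Hence $\rho_{i+1/2}^{n,-}\leq\rho_i^n$. The argument for $\rho_{i+1/2}^{n,+}\leq\rho_{i+1}^n$ is symmetric, using $(u_{i+1}^n)_-\leq 0$ and again $\min(\phi_i^n,\phi_{i+1}^n)-\phi_{i+1}^n\leq 0$.

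With monotonicity in hand I would close the proof by a case split on the sign of the numerical mass flux $\mathcal{F}^{\rho}(U_{i+1/2}^{n,-},U_{i+1/2}^{n,+})$, exactly as in \cite{Audusse}. Writing the first inequality of \eqref{WB:prop_fully_discrete1} as $\rho_i^n - \frac{\Delta t}{\Delta x}(\mathcal{F}^{\rho}-\rho_i^n u_i^n)\geq 0$, one treats the term $-\mathcal{F}^\rho$ and the term $\rho_i^n u_i^n$: when the relevant combination has the favorable sign the inequality is immediate from $\rho_i^n\geq 0$; when it does not, one bounds it below by the corresponding expression with $\rho_{i+1/2}^{n,-}$ in place of $\rho_i^n$ (legitimate by $\rho_{i+1/2}^{n,-}\leq\rho_i^n$ and the sign of the coefficient), which is $\geq 0$ by the interface inequality derived in the first paragraph. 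The second inequality of \eqref{WB:prop_fully_discrete1} is handled the same way using $\rho_{i+1/2}^{n,+}\leq\rho_{i+1}^n$. The main obstacle, and the only genuinely delicate point, is getting the sign bookkeeping in this final case analysis right — in particular making sure that in every branch the substitution $\rho_i^n\rightsquigarrow\rho_{i+1/2}^{n,-}$ (resp. $\rho_{i+1}^n\rightsquigarrow\rho_{i+1/2}^{n,+}$) decreases the left-hand side, so that the inequality for the reconstructed states implies the one for the cell states; the CFL condition \eqref{WB:prop_fully_discrete_CFL} is what guarantees the interface inequalities that feed this comparison, and it must be the one written with the reconstructed states, not the original ones.
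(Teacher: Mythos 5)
Your proof follows the paper's argument essentially step for step: apply Definition \ref{def} to the reconstructed pair $(U_{i+1/2}^{n,-},U_{i+1/2}^{n,+})$ under the CFL condition \eqref{WB:prop_fully_discrete_CFL}, establish the monotonicity $\rho_{i+1/2}^{n,-}\le\rho_i^n$ and $\rho_{i+1/2}^{n,+}\le\rho_{i+1}^n$ from the reconstruction \eqref{reconstruction_rho} (your justification of this, via the signs of $-\alpha\Delta x(u_i^n)_+$ and $\chi(\min(\phi_i,\phi_{i+1})-\phi_i)$ and the monotonicity of $\Psi$, is exactly the paper's reason), and then trade the reconstructed densities for the cell averages. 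The only remark worth making is that your final case split on the sign of $\mathcal{F}^{\rho}$ is unnecessary: the paper simply rewrites the interface inequalities as $\bigl(1+\frac{\Delta t}{\Delta x}u_{i}^n\bigr)\rho_{i+1/2}^{n,-}-\frac{\Delta t}{\Delta x}\mathcal{F}^{\rho}\ge 0$ and $\bigl(1-\frac{\Delta t}{\Delta x}u_{i+1}^n\bigr)\rho_{i+1/2}^{n,+}+\frac{\Delta t}{\Delta x}\mathcal{F}^{\rho}\ge 0$, and observes that the coefficients $1\pm\frac{\Delta t}{\Delta x}u^n$ are nonnegative because $|u_i^n|\le\sigma(U_{i+1/2}^{n,-},U_{i+1/2}^{n,+})$ together with the CFL condition, so the monotone substitution is valid in all cases simultaneously — this is the precise form of the ``sign of the coefficient'' point you correctly flagged as the delicate step.
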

\begin{proof}
Assuming   the CFL  stability  condition (\ref{WB:prop_fully_discrete_CFL}) holds,  we have 
\begin{equation*}
\left\{\begin{array}{l}
\ds\rho_{i+1/2}^{n,-}-\frac{\Delta t}{\Delta x}\left(\mathcal{F}^{\rho}(U_{i+1/2}^{n,-},U_{i+1/2}^{n,+})-\rho_{i+1/2}^{n,-}u_{i+1/2}^{n,-}\right)\geq 0,
\medskip\\
\ds \rho_{i+1/2}^{n,+}-\frac{\Delta t}{\Delta x}\left(\rho_{i+1/2}^{n,+}u_{i+1/2}^{n,+}-\mathcal{F}^{\rho}(U_{i+1/2}^{n,-},U_{i+1/2}^{n,+})\right)\geq 0,
\end{array}  \right.
\end{equation*} 
thanks to the Definition \ref{def} of the  preservation of  the non negativity for the homogeneous system. 
It is equivalent to 
\begin{equation}\label{WB:prop_fully_discrete2}
\left\{\begin{array}{l}
\displaystyle{\left(1+\frac{\Delta t}{\Delta x}u_{i}^n\right)\rho_{i+1/2}^{n,-}-\frac{\Delta t}{\Delta x}\mathcal{F}^{\rho}(U_{i+1/2}^{n,-},U_{i+1/2}^{n,+})\geq 0,}
\medskip\\
\displaystyle{\left(1-\frac{\Delta t}{\Delta x}u_{i+1}^n\right)\rho_{i+1/2}^{n,+}+\frac{\Delta t}{\Delta x}\mathcal{F}^{\rho}(U_{i+1/2}^{n,-},U_{i+1/2}^{n,+})\geq 0.}
\end{array} \right.
\end{equation}
From the reconstruction (\ref{reconstruction_rho})  we know that $\rho_{i+1/2}^{n,-}\leq\rho_{i}^n$ and $\rho_{i+1/2}^{n,+}\leq\rho_{i+1}^n$. So, as long as
\begin{displaymath}
\left(1+\frac{\Delta t}{\Delta x}u_{i}^n\right)\geq 0 \textrm{ and } \left(1-\frac{\Delta t}{\Delta x}u_{i+1}^n\right)\geq 0,
 \end{displaymath} 
conditions (\ref{WB:prop_fully_discrete2}) imply  equations (\ref{WB:prop_fully_discrete1}). 

Since $|u_{i}^n|\leq\sigma(U_{i+1/2}^{n,-},U_{i+1/2}^{n,+})$ for all $i=1,2,...,N$, under the CFL condition (\ref{WB:prop_fully_discrete_CFL}), we have  
 $\ds u_{i}^n\geq-\frac{\Delta x}{\Delta t}$ and $\ds u_{i+1}^n\leq\frac{\Delta x}{\Delta t}$, 
 which ends the proof.
\end{proof}

\section{Numerical results}\label{sec:simulations}
In this section, we analyze numerically 
the asymptotic behavior of system (\ref{eq:main_system}) 
 using the 
 scheme introduced in the previous section. First, we compare three different Riemann solvers, Roe, HLL and Suliciu and we show that Suliciu solver is the most adapted to treat vacuum. 
 Then,  in order to study the accuracy of our  scheme, we compare it with a standard  finite difference method with centered in space discretization of the source term. We will see that our  scheme captures better the interface with vacuum, 
 shows less diffusion than the standard finite difference method and gives a proper resolution of nonconstant steady states.  
 Therefore, 
 for  the following numerical simulations, we use 
  the Suliciu relaxation solver, described in \cite{Bouchut_book} for the system of isentropic gas dynamics, with an upwinding of the source term. 
 
This scheme is accurate enough to 
 study numerically
  the stability of the lateral bump and the  dependence of the  asymptotic solutions of system (\ref{eq:main_system}) on some of the   parameters of the system. In particular, for $\gamma=2$, we are interested in 
   the asymptotic number of  bumps 
  for different lengths of the domain $L$ and values of the chemotactic sensitivities $\chi$. We also compare the behavior of the system for different values of the adiabatic exponent $\gamma$ and, finally,  we study the influence of the initial mass  on the structure of  asymptotic equilibria, comparing the results for $\gamma=2$ and $\gamma=3$.  
 
 \subsection{Comparison of different solvers for the homogeneous part}

In order to obtain a numerical approximation using a finite volume scheme we have to calculate numerical fluxes between control cells. This fluxes evaluator is based on solving the Riemann problem at each facet in exact or approximate form. Hyperbolic, homogeneous part of the model (\ref{eq:main_system}) coincides with the isentropic gas dynamics system for which many Riemann solvers are available. However, due to the presence of the source term some of analytical properties of solutions are modified leading to numerical difficulties that cannot be handle by all known Riemann solvers. In particular, occurrence of vacuum and asymptotic, nonconstant states may cause instabilities and negative values of the density. This is the reason why we compare different approximate Riemann solvers, Roe's method, HLL and Suliciu solvers and explain our choice of Suliciu as the most adapted for numerical analysis in the following sections. 

In the first simulation we consider system (\ref{eq:main_system}) with $\varepsilon=D=a=b=L=1$, $\chi=50$,  a quadratic pressure $\gamma = 2$ and the initial data defined as follows
\begin{displaymath}
\rho_{0}(x)=1+\sin(4\pi|x-L\slash 4|),\quad\phi_{0}=0,\qquad u_{0}=0.
\end{displaymath}
Figure~\ref{fig:Comparison_error} presents the  $L^{2}$  (on the left)  and $L^{\infty}$  (on the right) numerical errors for the three previously mentioned solvers with well-balancing of the source term. Behavior at two different grid sizes $\Delta x=0.05$ and $\Delta x=0.01$ is studied. The reference solution is obtained using Suliciu solver and well-balancing reconstruction on the fine grid with mesh size $\Delta x=10^{-3}$. The time step satisfies $\ds  \Delta t=0.9 \Delta x/ \lambda$.
We observe similar behavior of the errors for all the solvers. They oscillate at the beginning of the evolution and stabilize with time. There is no significant difference between them, although the errors for  Roe's method seem a little bigger than in the case of HLL and Suliciu. 

 \begin{figure}[ht]
\begin{center}
\begin{tabular}{cc}
\includegraphics[scale=0.17]{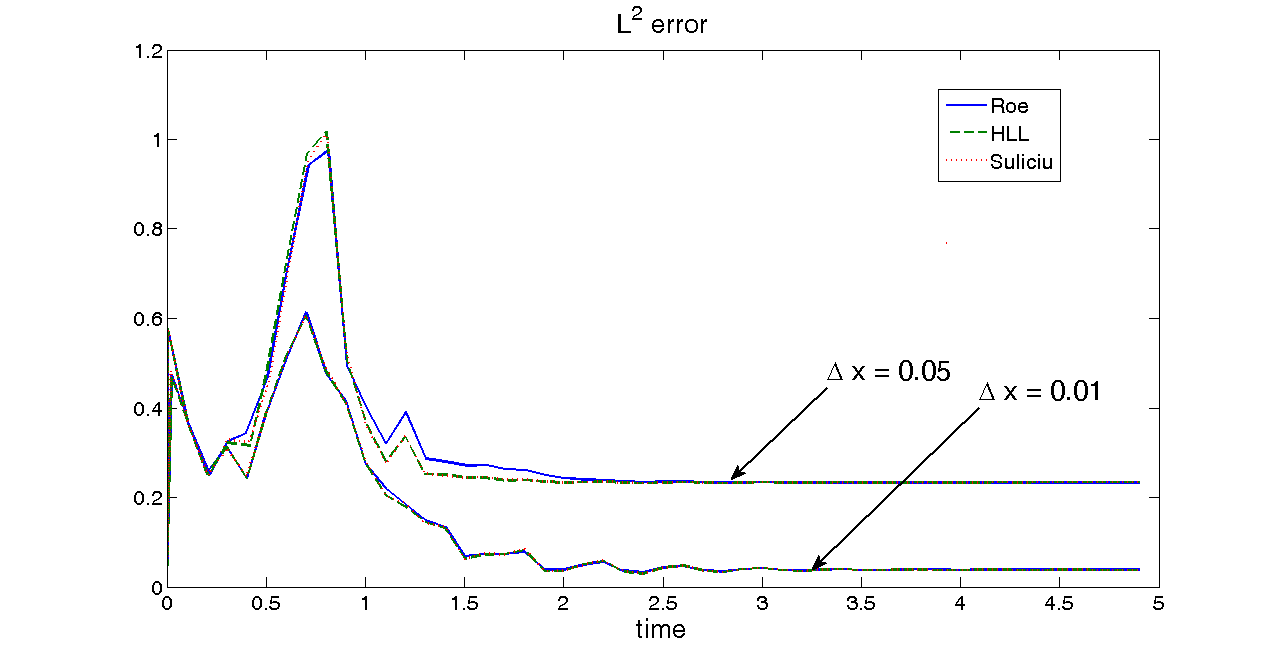}&
\includegraphics[scale=0.17]{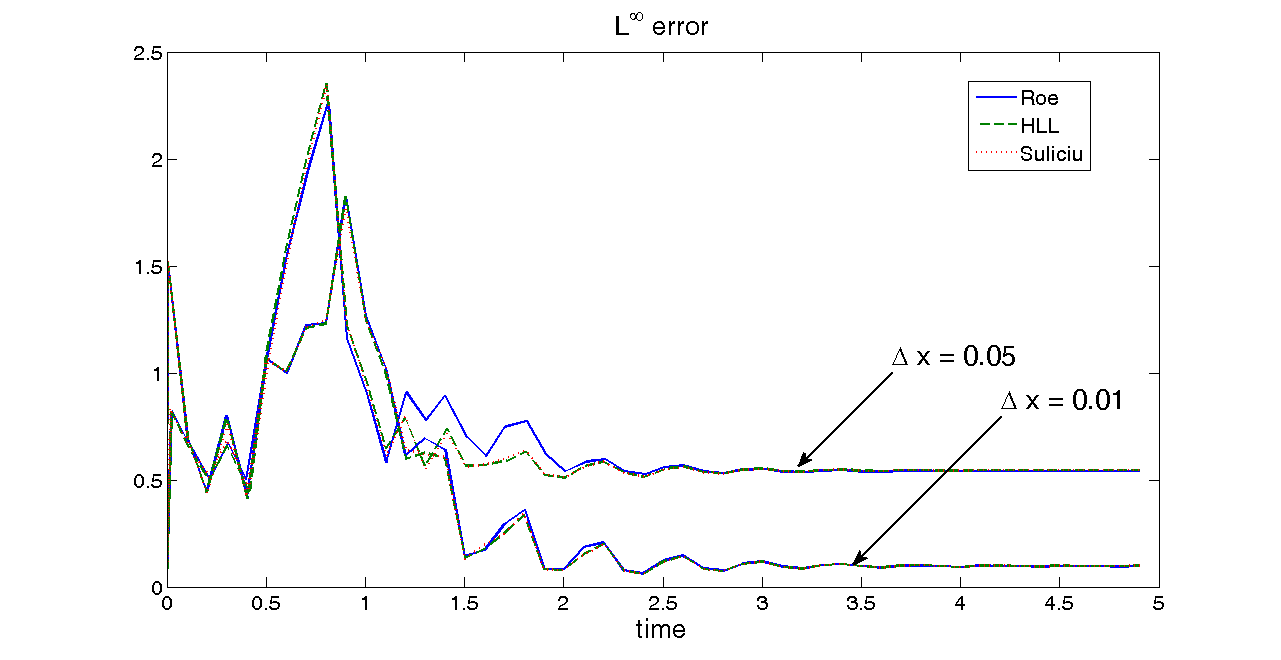}
\end{tabular}
\caption{Time evolution of $L^{2}$ (on the left) and $L^{\infty}$  (on the right) numerical errors of the density $\rho$ for system  \eqref{eq:main_system}  in the case $P(\rho)=\varepsilon\rho^{2}$, with $\varepsilon=D=a=b=L=1$, $\chi=50$,  approximated using a finite volume, well-balanced scheme (\ref{eq:scheme_fully}).  Three different  Riemann solvers: Roe method (in solid blue line), HLL solver (in dashed green line), Suliciu relaxation solver  (in dotted red line) are compared for $\Delta x=0.05$ and $\Delta x=0.01$ .}
\label{fig:Comparison_error}
\end{center}
\end{figure}  

In the second test, we increase the adiabatic coefficient $\gamma$ in the pressure function taking $\gamma=3$. Figure~\ref{fig:Comparison_gamma3} presents the density profiles at asymptotic states containing vacuum and steep gradients near the interfaces between regions where the density is strictly positive and regions where the density vanishes. We observe that  Roe's method, which is based on a linearization of the system, fails and produces negative values of the density near vacuum. We mention here that the classical Stager-Warming flux splitting also oscillates at the interface with vacuum. The two other solvers, HLL and Suliciu, are stable, they preserve non negativity of the density and approximate the interface with high resolution. Suliciu's approach is to use   a relaxation scheme in which the mass conservation equation is not relaxed. This makes it less diffusive than the HLL solver and allows to capture better contact discontinuities. Moreover, in general, the HLL solver needs very careful wave speed estimates. However, in the case of the isentropic gas equations, it is not clear a priori that there is a difference between these two solvers and, in our tests, we don't observe any significant distinction between these two methods. As our model contains source terms leading to steep gradients of the density and appearance of regions where the density vanishes, we decide to use Suliciu solver, which is  adapted to treat  vacuum. \newline

\begin{figure}[ht]
\begin{center}
\begin{tabular}{ccc}
\includegraphics[scale=0.11]{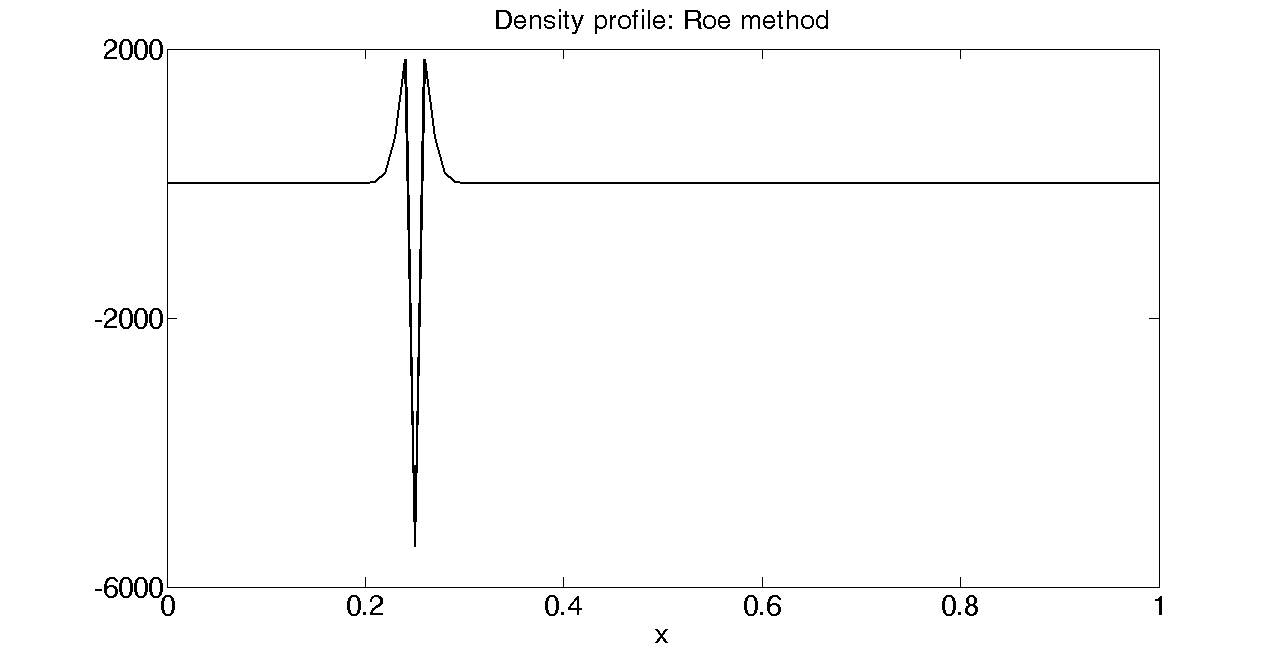}&
\includegraphics[scale=0.11]{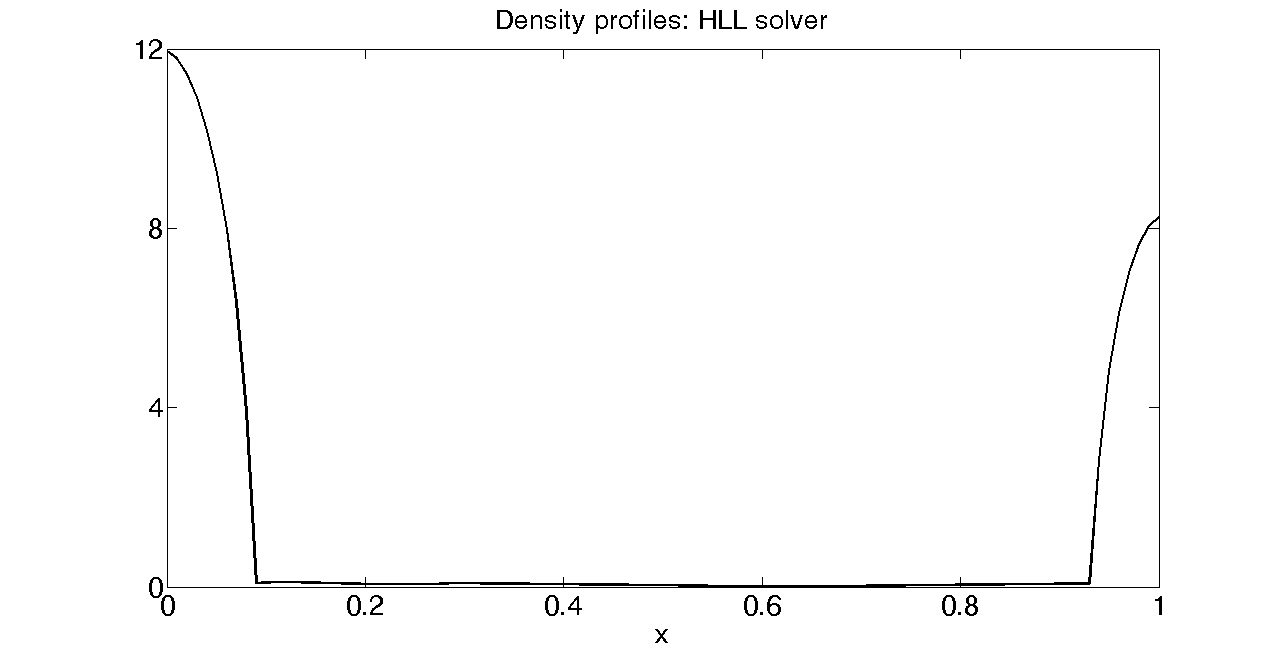}&
\includegraphics[scale=0.11]{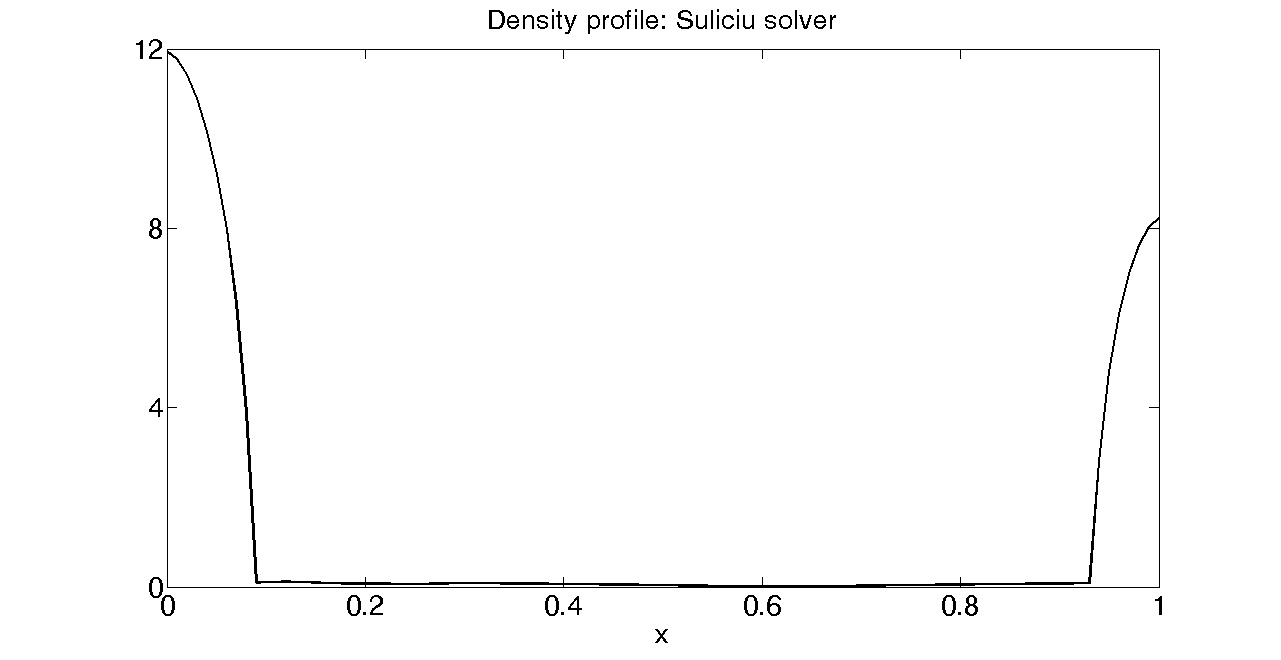}\\
Roe&HLL&Suliciu
\end{tabular}
\caption{Density profiles  for system  \eqref{eq:main_system}  at time $T=5$  in the case $P(\rho)=\varepsilon\rho^{3}$, with $\varepsilon=1$,$D=0.1$, $a=20$, $b=10$, $\chi=10$, $M=1$  approximated using a finite volume, well-balanced scheme (\ref{eq:scheme_fully}).  Three different approximate Riemann solvers: Roe method (on the left), HLL solver (in the middle),  Suliciu relaxation solver (on the right) are compared. }
\label{fig:Comparison_gamma3}
\end{center}
\end{figure}  

\begin{rem}
Numerical simulations indicate that none of the three approximate Riemann solvers that we studied is stable for  $\gamma>2$ with  large initial masses.
 Therefore, our numerical analysis of the system (\ref{eq:main_system}) is performed for initial masses small enough to assure the stability of the scheme.    
\end{rem}

\subsection{Accuracy of the numerical approximation for the source term}
Now,  we analyze how the finite volume numerical scheme \eqref{eq:scheme_fully}-(\ref{eq:ansatz_full})
with  the reconstruction (\ref{reconstruction_u})-(\ref{reconstruction_rho}) manages to capture a particular 
asymptotic behavior of system (\ref{eq:main_system}). More precisely, we show that the finite volume approach with an upwinding of the source term behaves 
  better   near nonconstant steady states 
than a classical finite difference  centered discretization. 
In this subsection, we consider the case of a lateral bump for  $\gamma=2$. 

Figure~\ref{fig:Comparison_source} 
displays asymptotic density profiles  at time $T=100$ obtained with various schemes - finite volume with upwinding, finite volume without upwinding and finite difference without upwinding - 
and the exact solution given by (\ref{lateral_bump}). On the left, we show  the computations  with a space step $\Delta x$ equal to $0.05$ and on the right, equal to $0.01$. The initial data are the same as in the previous subsection and the time step satisfies the same stability condition $\ds  \Delta t=0.9 \Delta x/ \lambda$.
We see that
  the finite volume scheme with the well-balanced property gives clearly the most accurate 
   location of the interface.  Indeed, studying the different approximations of the source term, we notice that the centered discretization  produces a bigger error at the steady state. Then, comparing finite difference and finite volume, we observe that the finite difference method is characterized by a very high numerical diffusion, in contrast to the finite volume approach. However, decreasing the space step,  all the schemes converge to the reference solution, even if the finite volume well-balanced scheme is still the most accurate to give the correct location of the free boundary.
\begin{figure}[htbp!]
\begin{center}
\begin{tabular}{cc}
\includegraphics[scale=0.17]{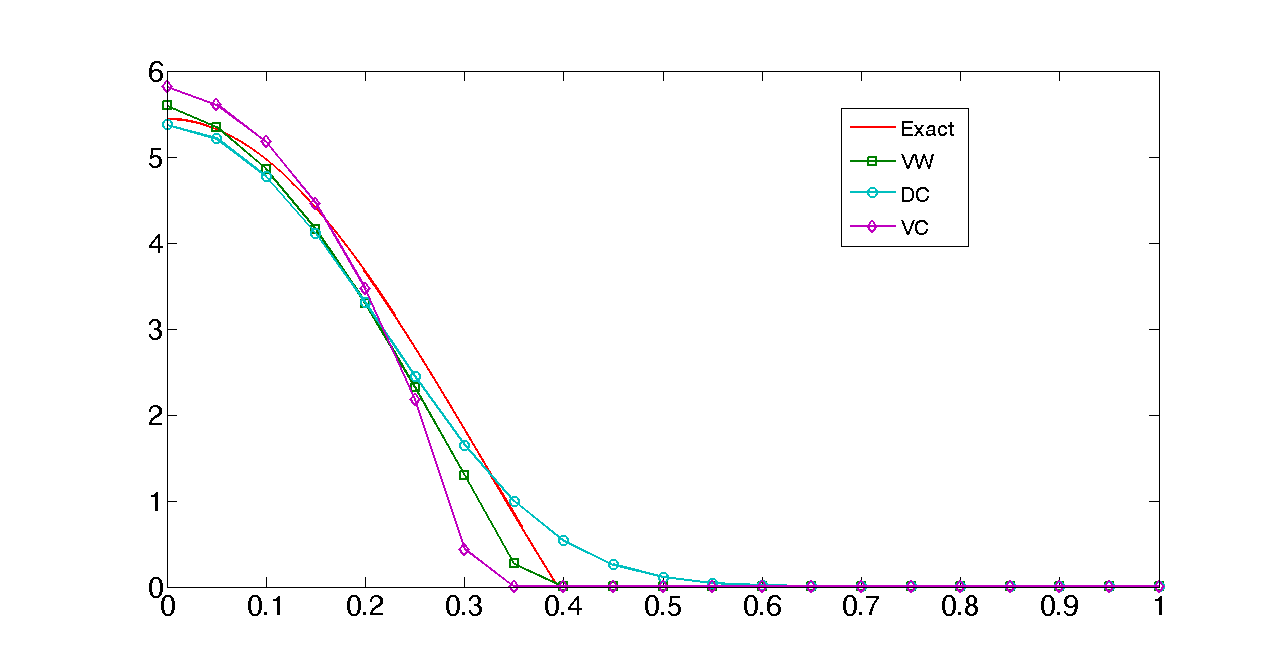}&
\includegraphics[scale=0.17]{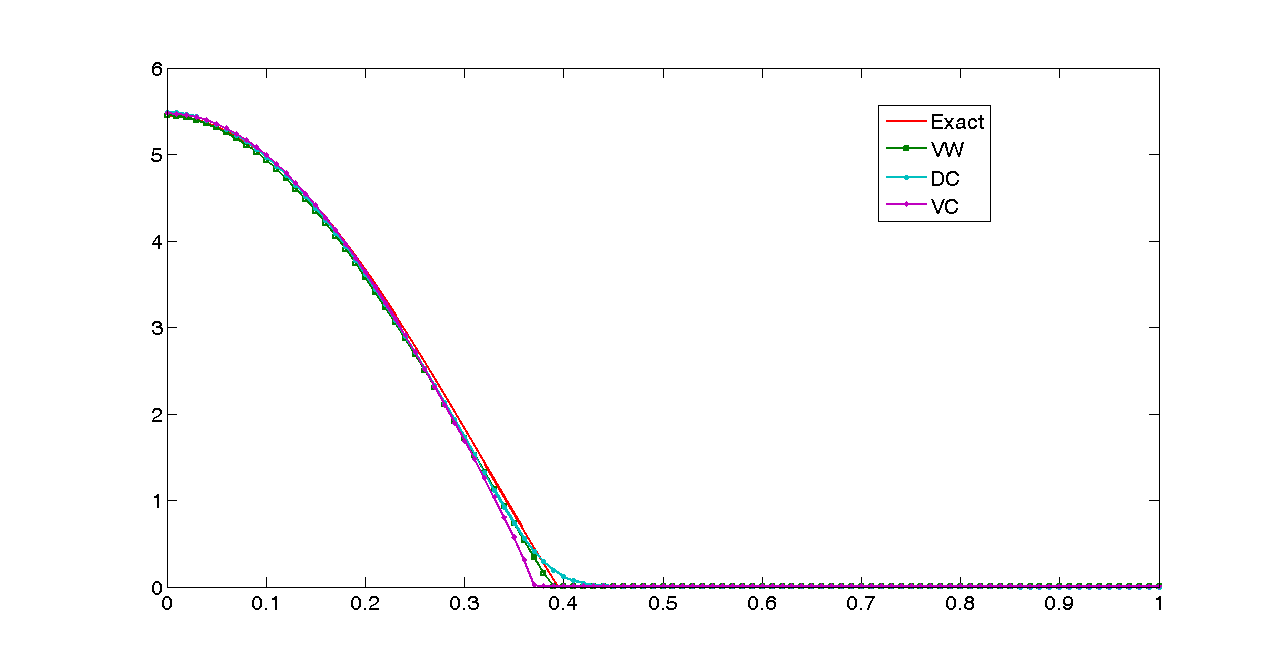}\\
$\Delta x=0.05$&$\Delta x=0.01$
\end{tabular}
\caption{{\small{Density profiles for system (\ref{eq:main_system}) with $P(\rho)=\rho^{2}$, total mass $M=1$ and $L=1$. 
The other parameters of the system are
$D=0.1$, $a=20$, $b=10$, $\chi=10$. Comparison between different methods of approximation of a source term: (green) VW - finite volume method with well-balanced reconstruction; (pink)  VC - finite volume method with centered in space discretization of the source term, (blue) DC -  finite difference scheme with centered in space approximation of the source. Reference solution (red) is given by the exact solution (\ref{lateral_bump})}}}
\label{fig:Comparison_source}
\end{center}
\end{figure}

Another important point in  the numerical approximation of the solutions of system (\ref{eq:main_system}) defined on a bounded domain with no-flux boundary conditions is the accuracy  of the velocity. In this case,  the momentum should vanish at  steady states. Figure~\ref{fig:Comparison_velocity} presents the asymptotic  states of the density and the momentum obtained by two different methods : on top,  the finite volume  well-balanced scheme 
and  on bottom, the finite difference scheme with a centered discretization of the source term. On the right, we can see the corresponding residues of the momentum. First, we observe that at time $T=100$ the residues are less than  $10^{-8}$ and decreasing, which means that the steady state is  reached. Then,  we see that the approximation of the density is comparable for both schemes, which was 
also observed in the previous test. However, the finite difference approach produces an error in the momentum profile. Its $L^{\infty}$ norm is close to one instead of being equal to zero. 
\begin{figure}[htbp!]
\begin{center}
\begin{tabular}{cc}
\includegraphics[scale=0.17]{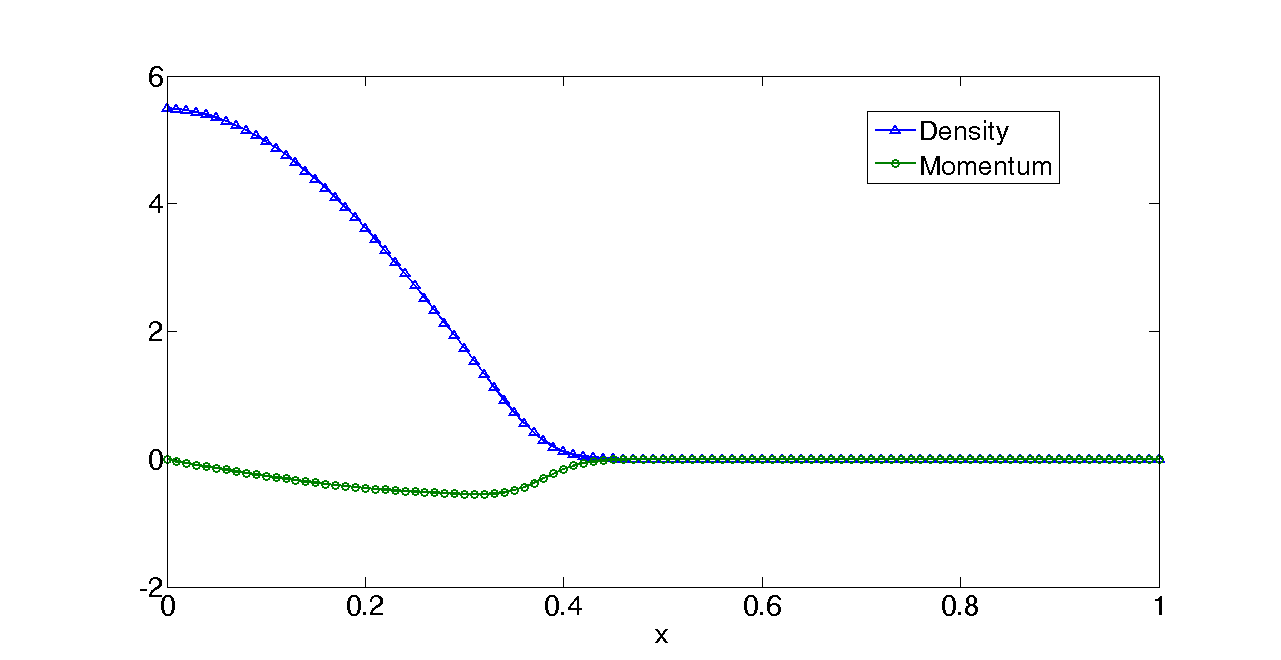}&\includegraphics[scale=0.17]{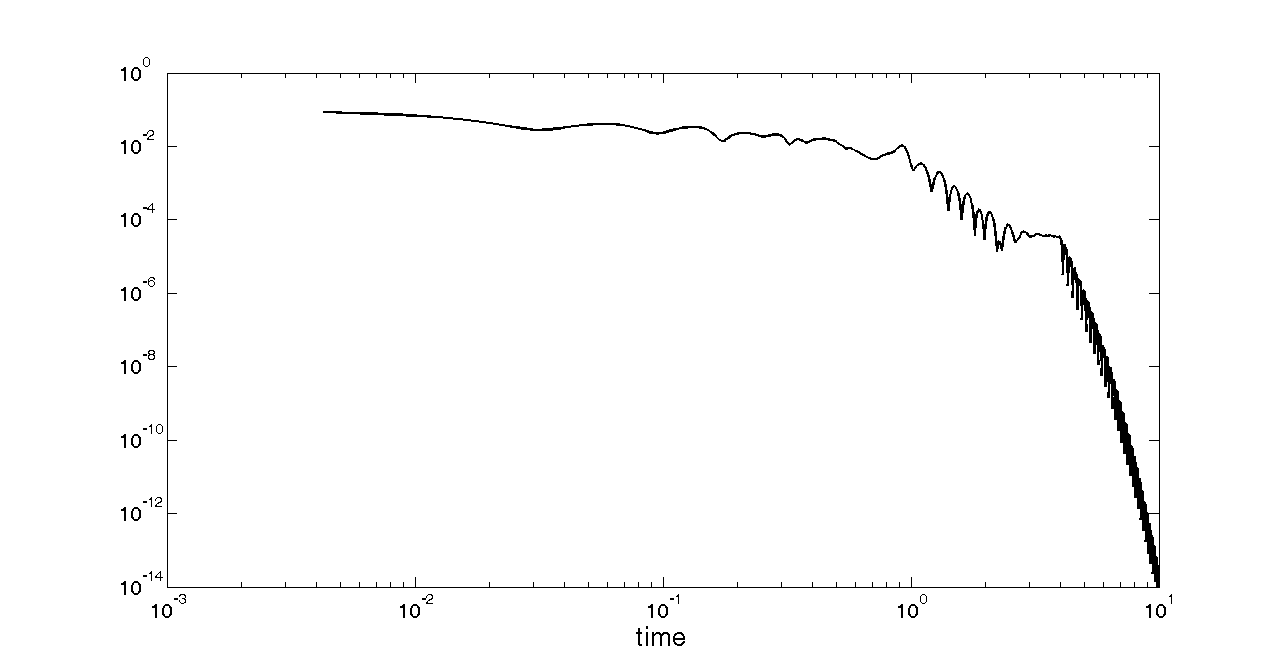}\\
\includegraphics[scale=0.17]{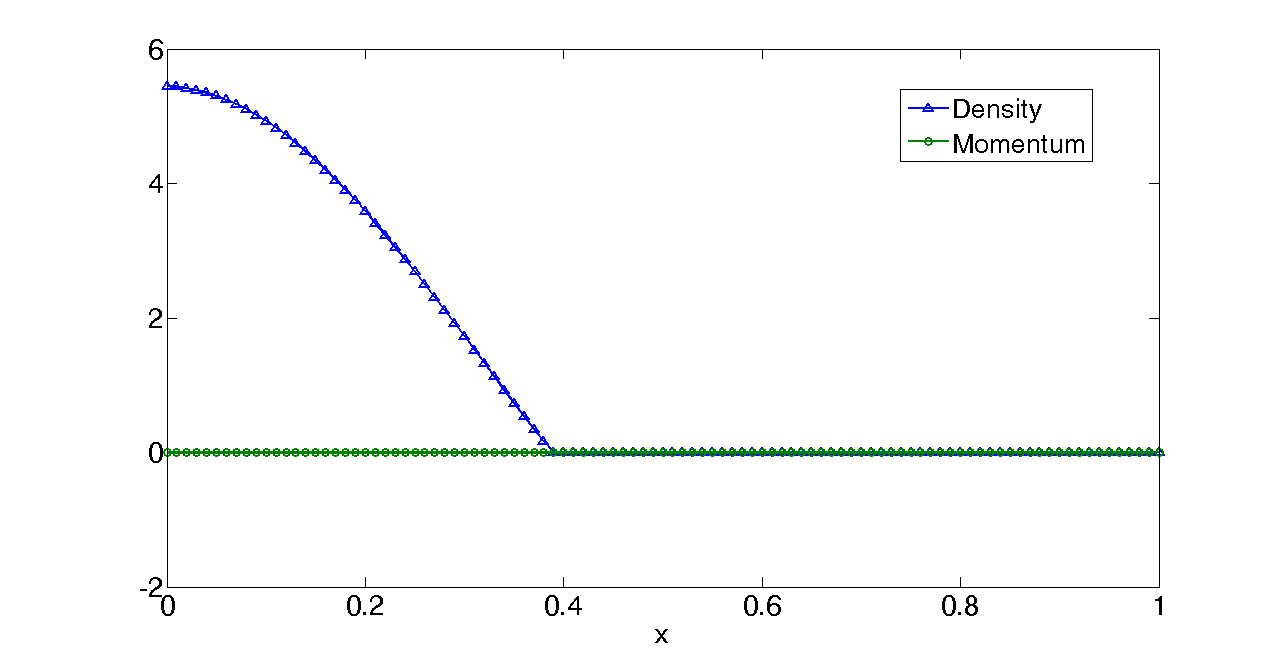}&\includegraphics[scale=0.17]{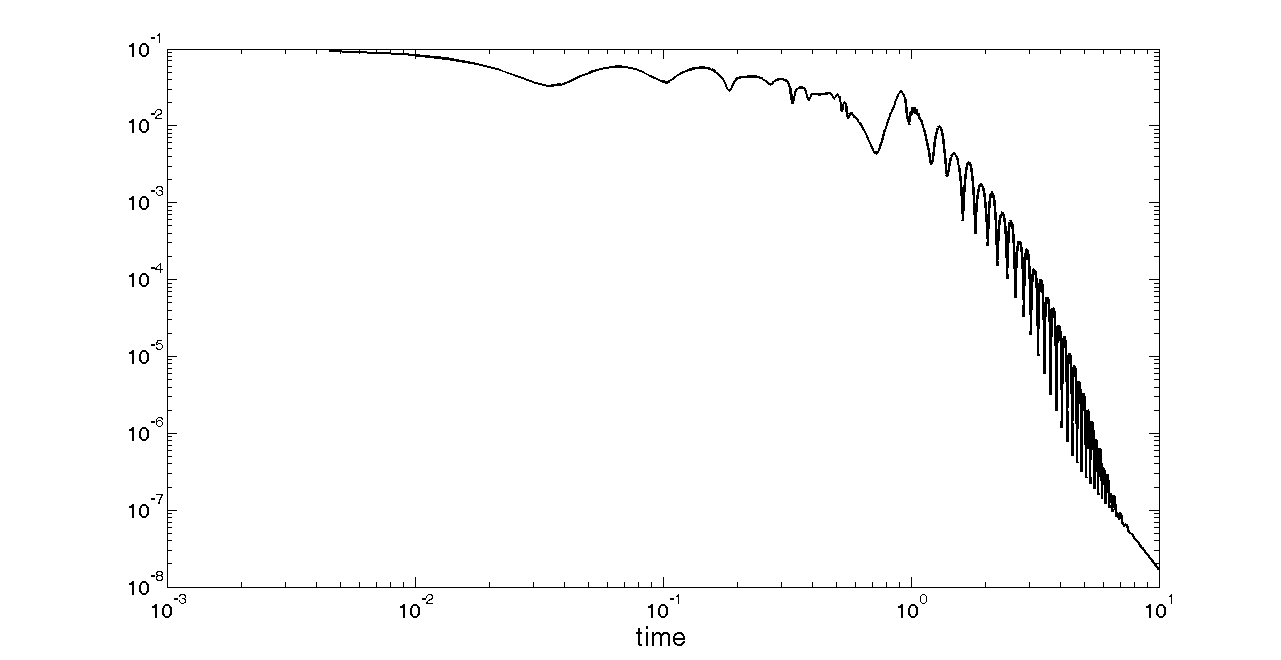}
\end{tabular}
\caption{{\small{On the left:  Asymptotic density (blue) and momentum (green) profiles 
 for system (\ref{eq:main_system})  in the case $P(\rho)=\varepsilon\rho^{2}$ and  $\varepsilon=D=a=b=L=1$, $\chi=50$ obtained by: (on the top) a finite difference scheme with centered in space approximation of the source ; (on the bottom) a finite volume, well-balanced scheme. On the right:  the corresponding residues of the momentum.}}}
\label{fig:Comparison_velocity}
\end{center}
\end{figure} 

The two previous subsections justify the choice of a finite volume well-balanced scheme with a Suliciu solver adapted to treat vacuum. This scheme is accurate enough to study the stability of the stationary solutions  and the asymptotic behavior of the system and we use it in the following  subsections. In what follows, the results are displayed at time $T=100$ with a space step equal to $0.01$ and a time step satisfying the previous stability condition.

\subsection{Stability of a lateral bump}
The first issue we study  is the stability of the stationary solutions computed exactly in subsection \ref{sec:stationary_solutions}. We consider here system (\ref{eq:main_system}) with a quadratic pressure function $P(\rho)=\varepsilon\rho^{2}$ and  with the following parameters:  $\varepsilon=D=a=b=L=1$ and $\chi=50$.   This choice guarantees that  $\ds\tau=\frac{a\chi}{2\varepsilon D}-\frac{b}{D}>0$ and  $\ds L>\pi\slash\sqrt{\tau}$ so the nonconstant steady state with one lateral bump  exists. 
The initial  mass  is taken equal to $\ds M_{0}=1+\frac{1}{\pi}$.
Assuming that the initial datum is not symmetric,  the equilibrium has the form of the lateral bump (\ref{lateral_bump}) with the interface point $\bar{x}\approx 0.3943$. This value is  obtained by solving numerically (\ref{bump:x}).

We analyze the stability of this steady state under two different types of perturbations. First, we perturb the location of the interface point $\bar{x}$. More precisely, we take  initial data of the form (\ref{lateral_bump}), in which $\bar{x}$ is replaced by $x^*=\bar{x}+\delta$. 
We also recalculate the parameter $K$,  the density $\rho$ and the concentration $\phi$ in order to have a perturbation with a zero mass.
 The parameter $\delta$ cannot be too large, since the definition (\ref{lateral_bump}) would lead to negative values of the density. This type of perturbation 
 modifies the solution on its whole support.  In the second test,  we change only the density profile in a small region  $[x_{1},x_{2}]$ satisfying $0<x_{1}<x_{2}<\bar{x}$ such that the initial density is defined as follows : 
\begin{equation}\label{perturb}
\rho_{0}(x)=\left\{\begin{array}{ll}
\rho(x),& \textrm{ for }0<x\leq x_{1},\\
\rho(x_{1}),&\textrm{ for }x_{1}<x\leq x^*,\\
\rho(x_{2}),&\textrm{ for }x^*<x\leq x_{2},\\
\rho(x),&\textrm{ for }x_{2}<x\leq\bar{x},\\
0,&\textrm{ for }\bar{x}<x\leq L,
\end{array}\right.
\end{equation}
where $\rho$ is the exact solution given by (\ref{lateral_bump}). The location of the jump $x^{*}\in(x_{1},x_{2})$ is chosen such that the mass of the perturbation is zero.

\begin{figure}[htbp!]
 \begin{center}
 \begin{tabular}{cc}
\includegraphics[scale=0.17]{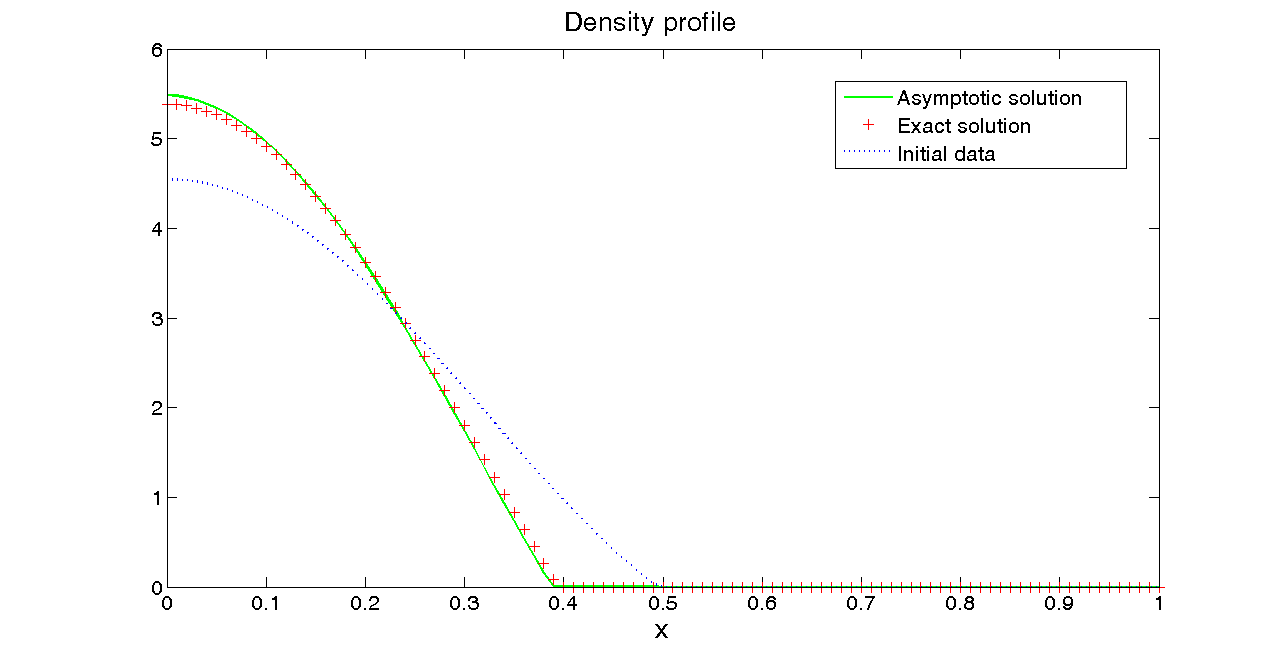}&\includegraphics[scale=0.17]{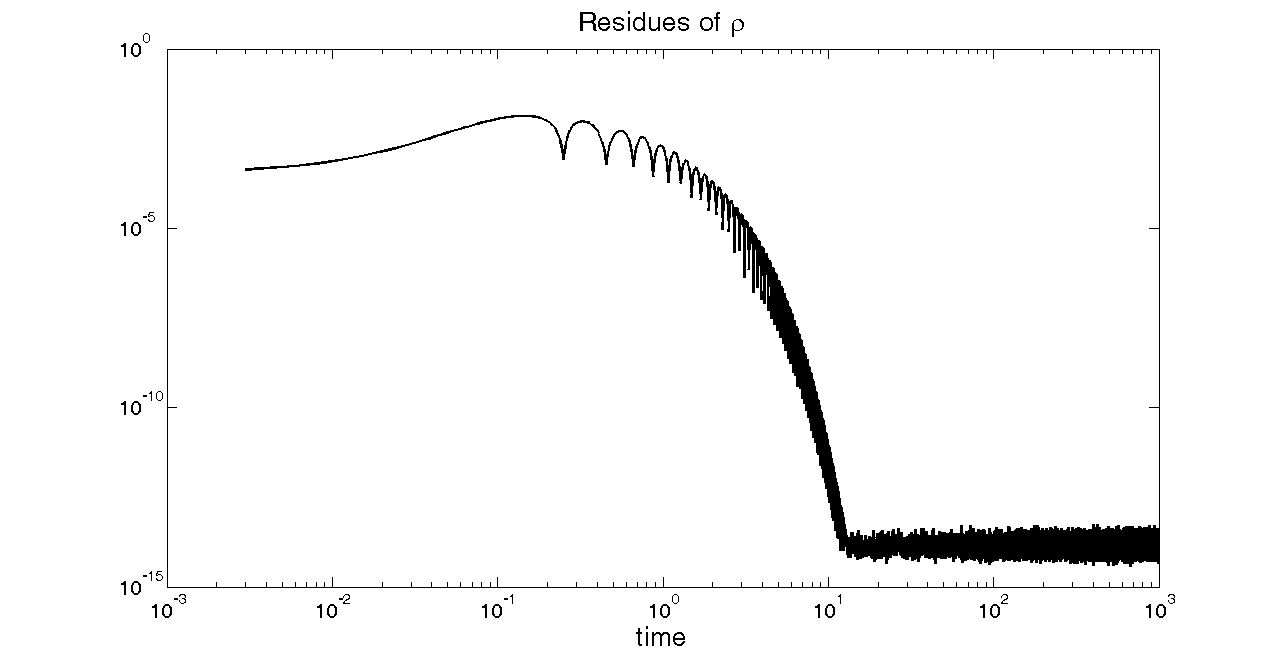}\\
\includegraphics[scale=0.17]{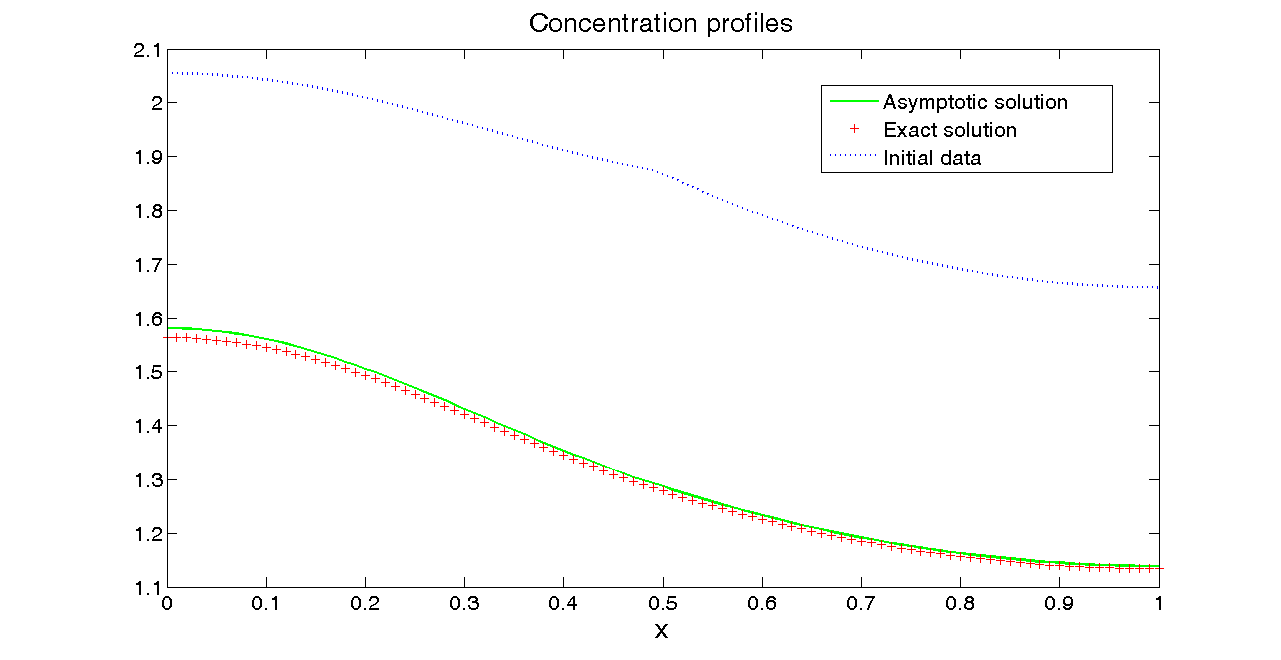}&\includegraphics[scale=0.17]{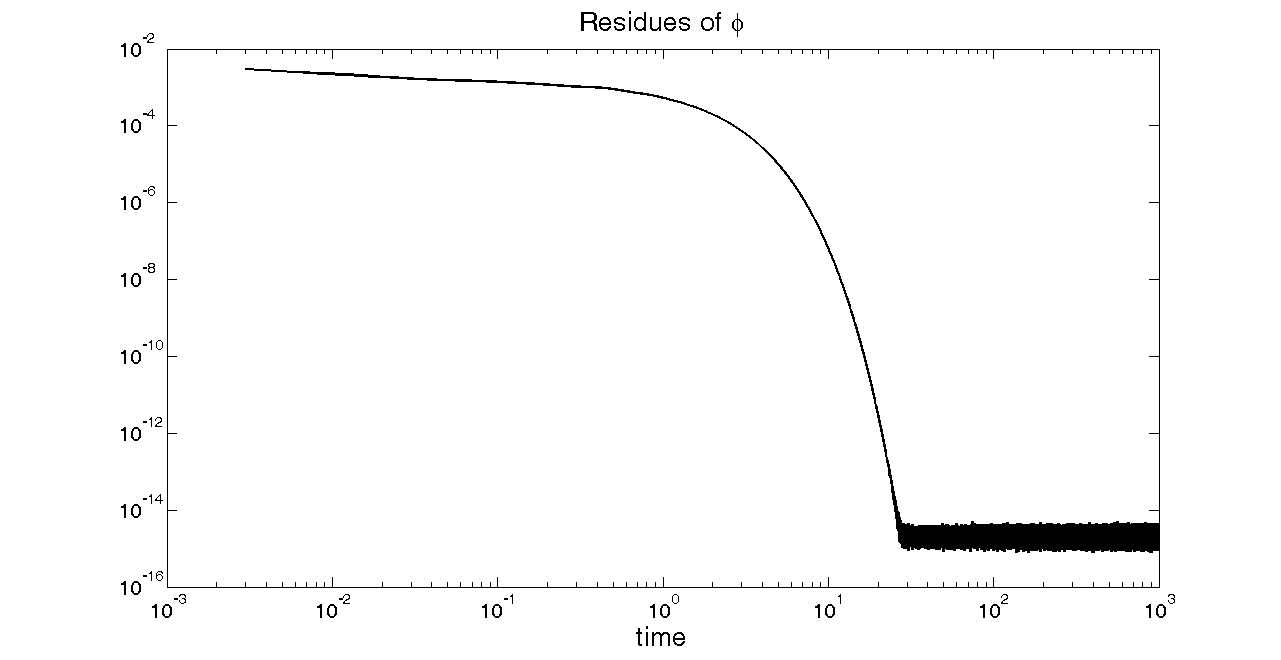}\\
\includegraphics[scale=0.17]{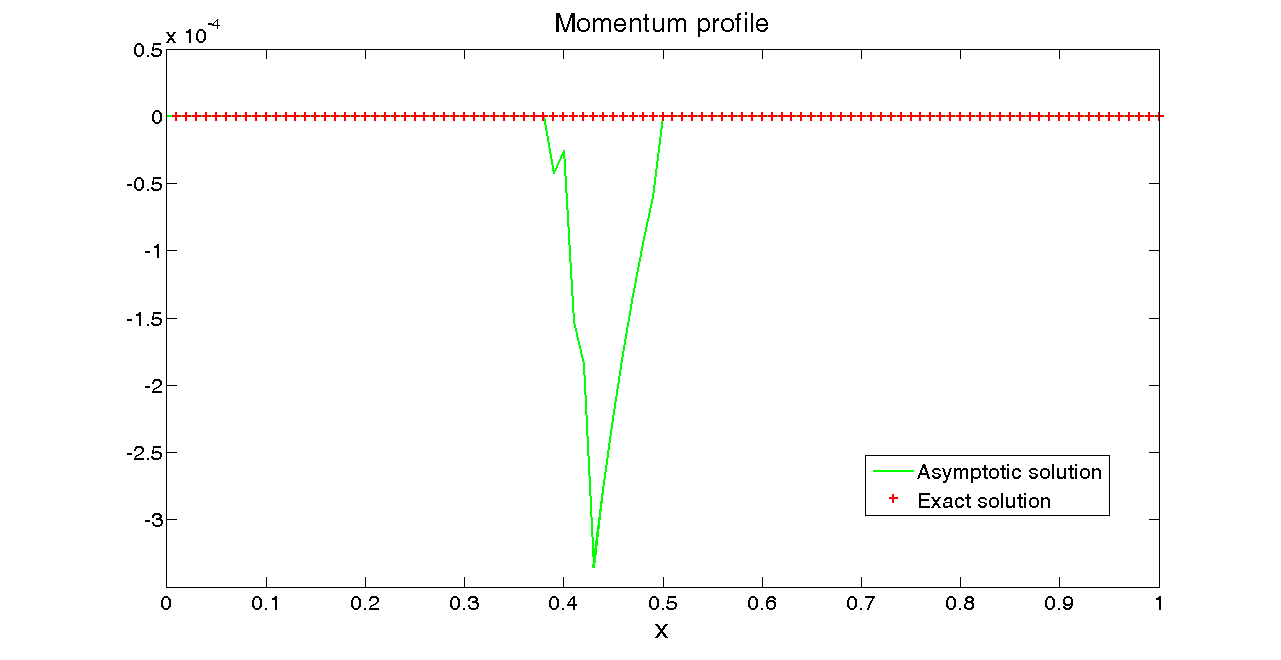}&\includegraphics[scale=0.17]{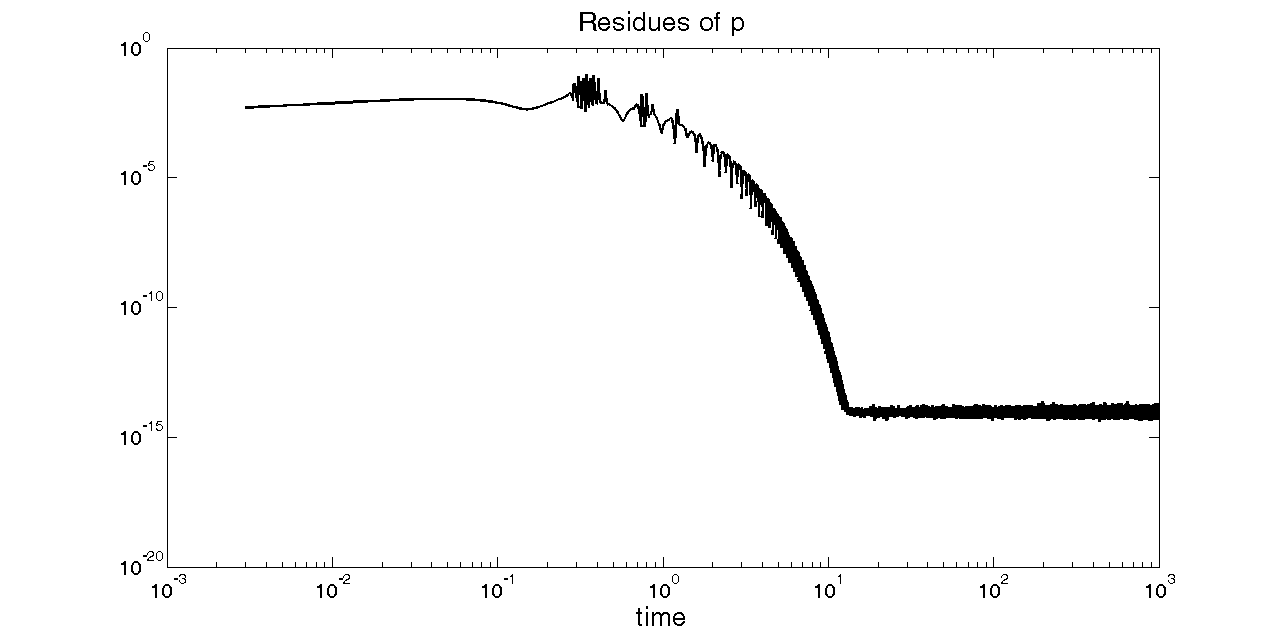}\\
\end{tabular}
\caption{{\small{On the left: Profiles for system (\ref{eq:main_system}) with $\gamma=2$ and $\varepsilon=D=a=b=L=1,\quad\chi=50$ of density (on top), concentration  (in the middle) and momentum (on bottom) : (+++ red) exact solutions (\ref{lateral_bump}); 
($\dots$ blue) initial data 
with a perturbation of the interface point $x^{*}=\bar{x}+0.1$; (--- green) asymptotic solutions corresponding to the previous initial data. On the right: corresponding residues. }}}
\label{Fig:stability_test1_plus}
\end{center}
\end{figure}
 For the first type of  perturbation with $\delta=0.1$, the results are presented at Figure~\ref{Fig:stability_test1_plus}. 
 On the left,  we can see  the profiles of density (on top), concentration (in the middle) and momentum (on bottom) for the initial perturbed data in dotted blue lines (.),
the asymptotic solution after perturbation in solid green lines (-) and the 
exact   solutions in lines marked with red crosses (+). On the right,   the corresponding  residues are displayed. Initially,  the residues of density have  values of order $10^{-2}$, which suggests that the solution is still evolving. Then, at some point, they decrease rapidly  to 
nearly $10^{-14}$ and stabilize at this value, which  means that the solution has reached asymptotically  the expected steady state. 
We remark  that the asymptotic profiles match perfectly the expected  solutions, which confirms that the stationary solution with a   
 lateral bump given by (\ref{lateral_bump})  is stable under this  kind of perturbation.

In Figure~\ref{Fig:stability_test2}, we display the results obtained with the second type of perturbation \eqref{perturb} with $[x_{1},x_{2}]=[0.6\bar{x},0.8\bar{x}]$, with the same curves as at Figure~\ref{Fig:stability_test1_plus}. 
\begin{figure}[htbp!]
 \begin{center}
 \begin{tabular}{cc}
\includegraphics[scale=0.17]{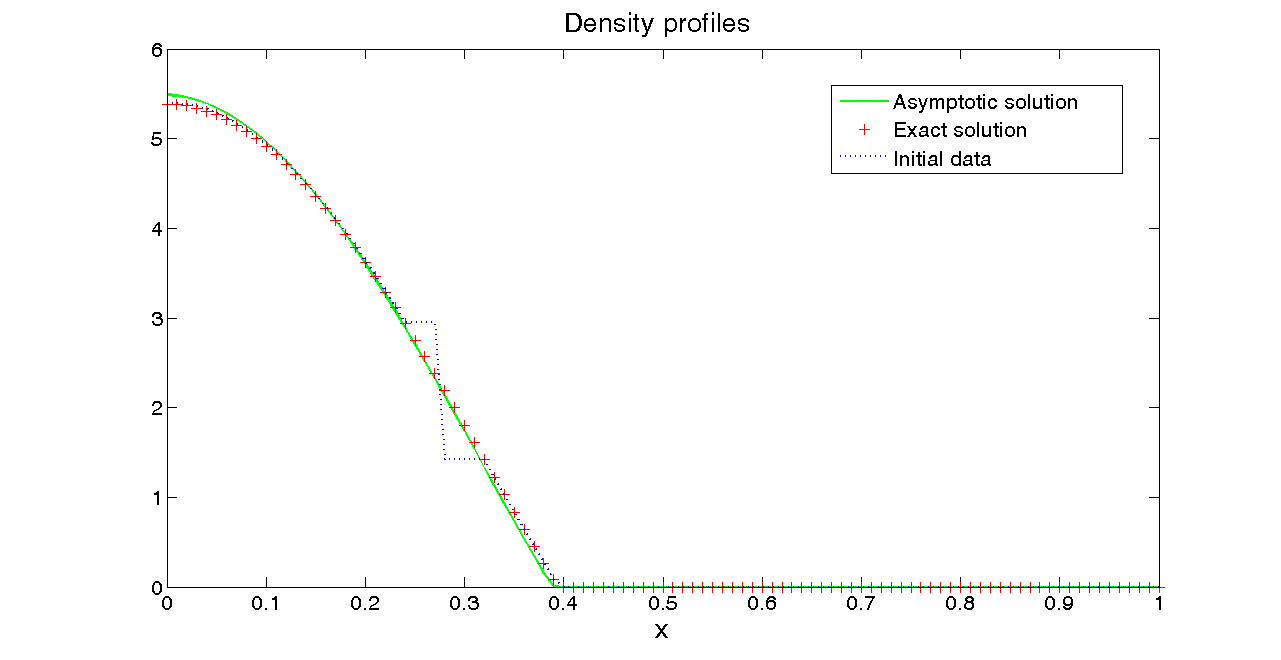}&\includegraphics[scale=0.17]{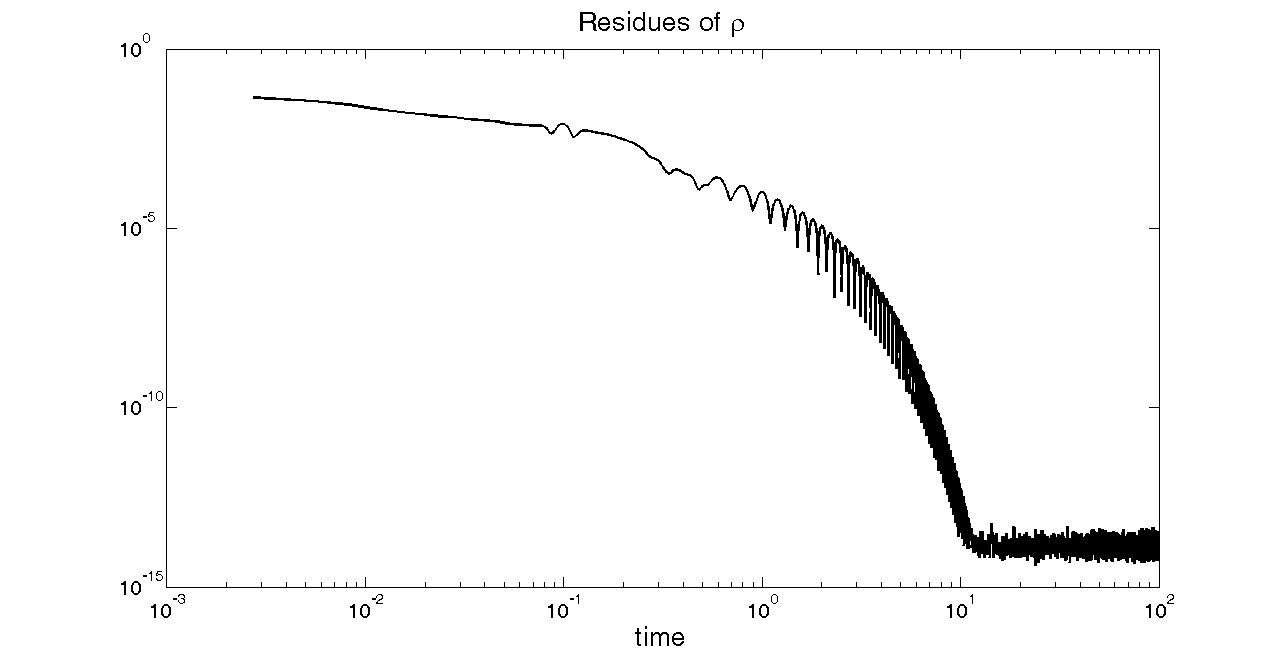}\\
\includegraphics[scale=0.17]{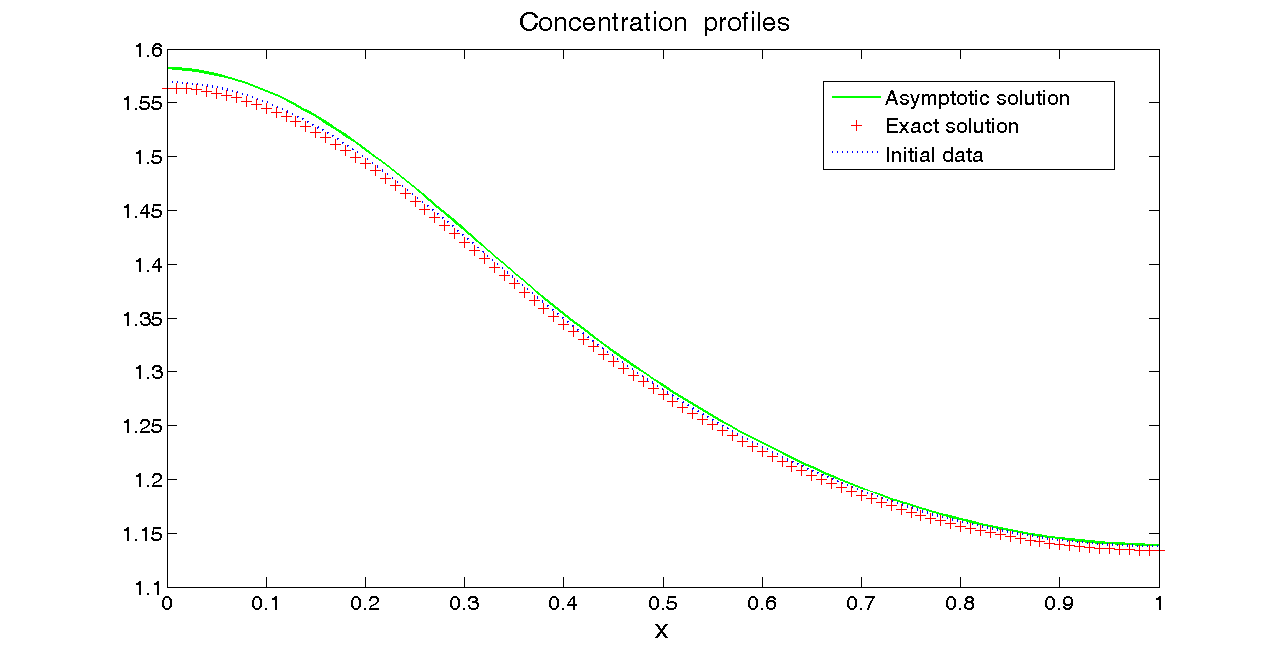}&\includegraphics[scale=0.17]{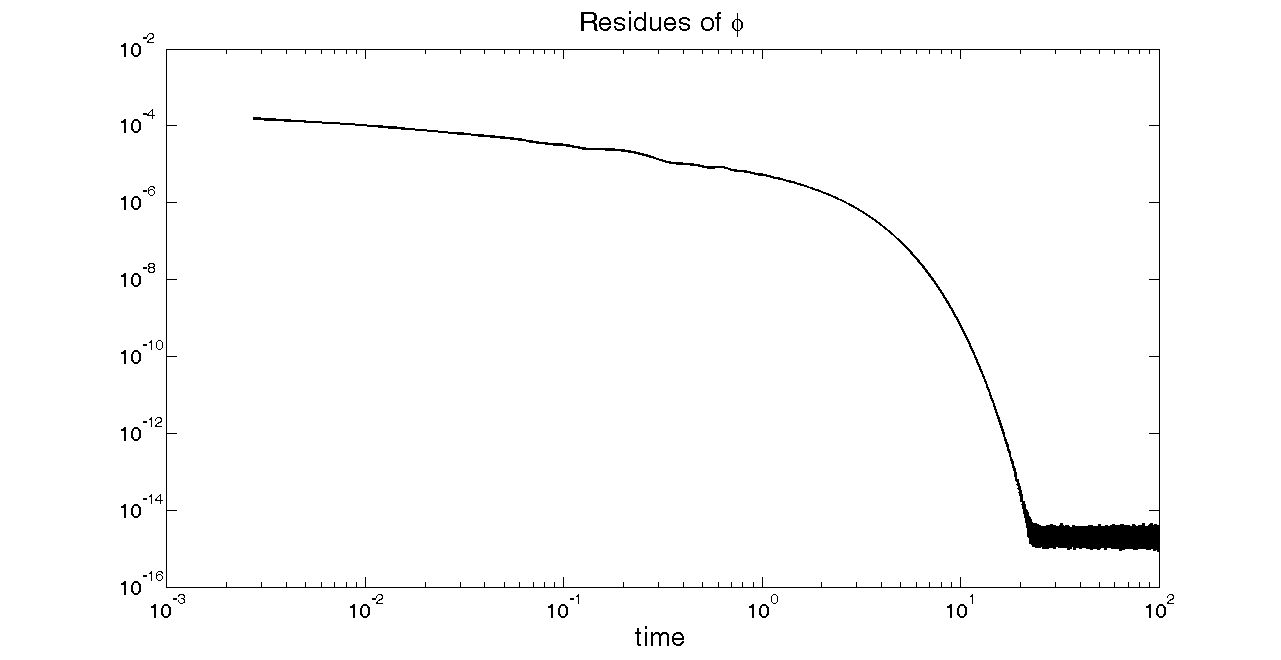}\\
\includegraphics[scale=0.17]{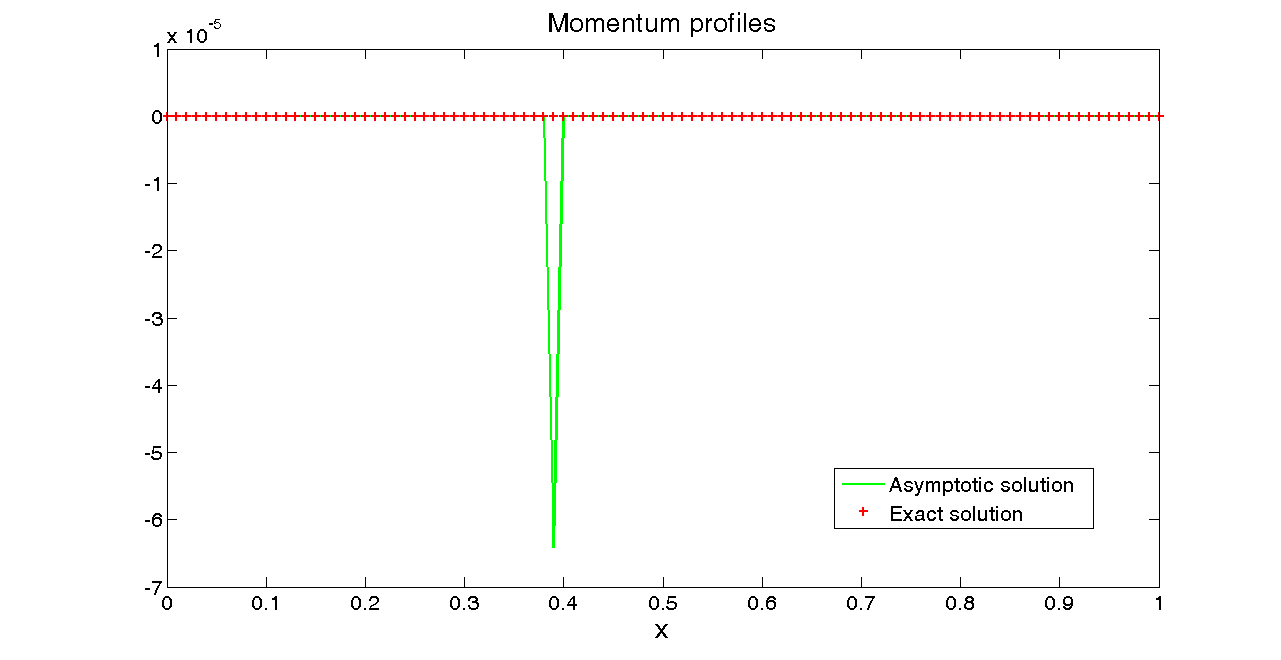}&\includegraphics[scale=0.17]{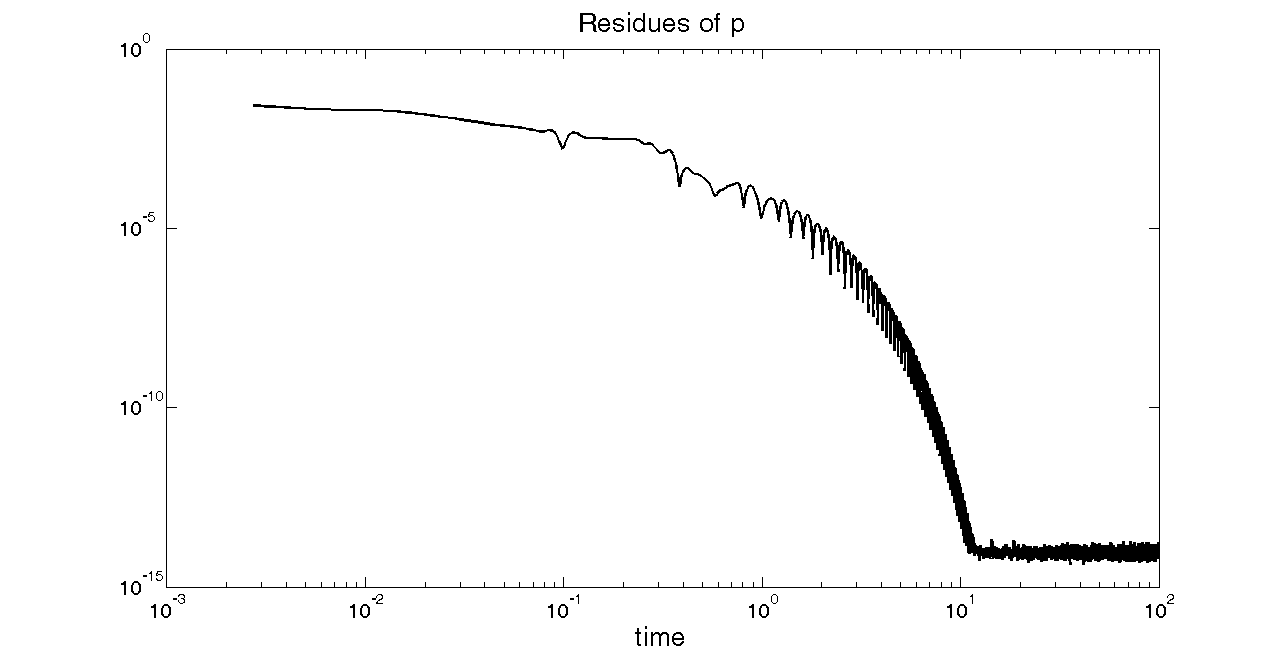}\\
\end{tabular}
\caption{{\small{On the left: Profiles for system (\ref{eq:main_system}) with $\gamma=2$ and $\varepsilon=D=a=b=L=1,\quad\chi=50$ of density (on top), concentration (in the middle)  and momentum (on bottom) : (+++ red) exact solutions (\ref{lateral_bump}); 
($\dots$ blue) initial data  perturbed in  the domain $[x_{1},x_{2}]=[0.6\bar{x},0.8\bar{x}]$;
 (--- green) asymptotic solutions corresponding to the previous initial data. On the right: corresponding residues.}}}
\label{Fig:stability_test2}
\end{center}
\end{figure}
We notice that the numerical results confirm the stability of the stationary solution composed of a lateral bump. The type of  perturbation is different, but the mechanisms of convergence to equilibrium have similar features.

\subsection{Dependence on the parameters  $\chi$ and $L$ of the asymptotic states }
Now, in this section, we study which  stationary solutions to system   (\ref{eq:main_system}) are reached asymptotically as a function of some parameters of the system and, in particular, how many bumps the asymptotic profile contains.  
 We showed analytically that, if $\tau\leq 0$, the equilibrium is given by a constant state. The same situation occurs if $\tau>0$ and $L\leq\pi\slash\sqrt{\tau}$. If we increase the size $L$ of the domain, nonconstant stationary solutions 
with several bumps may appear.  However, we are unable to determine analytically how many regions of positive density  the asymptotic solution will contain
and a numerical study is necessary here.
More precisely, our aim is to analyze how 
 the chemotactic sensitivity $\chi$ and the  length of the domain $L$  influence the number of bumps in the case of a quadratic pressure $\gamma=2$. 

In Figure~\ref{Fig:dependence_L}, we display  asymptotic profiles of the density obtained for different lengths $L$ of the domain. 
The initial data are taken equal to :
\begin{displaymath}
\rho_{0}(x)=\xi\left( 1+\sin(4\pi|x-L\slash 4|)\right),\quad\phi_{0}=0,\qquad u_{0}=0,
\end{displaymath}
where $\xi$ is computed as a function of the length $L$ of the domain in order to keep the initial mass constant and equal to $M=1.3183$.
We observe that if the length of the domain is not large enough, that is $L\leq\pi\slash\sqrt{\tau}$, the asymptotic  state is constant, as expected. Increasing the size of the domain from $L=1$ to $L=4$ 
leads to the concentration of particles at the boundary and nonconstant solutions composed of one bump appear. A further increase, from $L=7$ to $L=30$, allows new bumps to appear and a two bumps solution is observed. 
\begin{figure}[htbp!]
 \begin{center}
 \begin{tabular}{cc}
\includegraphics[scale=0.17]{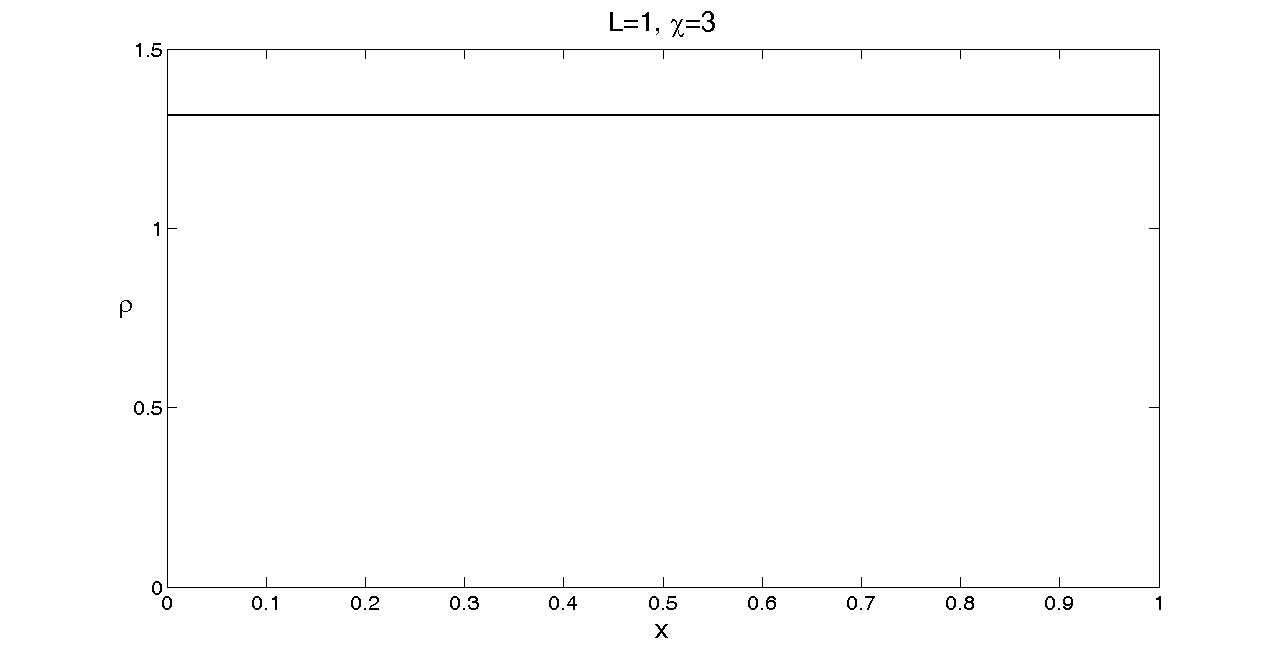}&\includegraphics[scale=0.17]{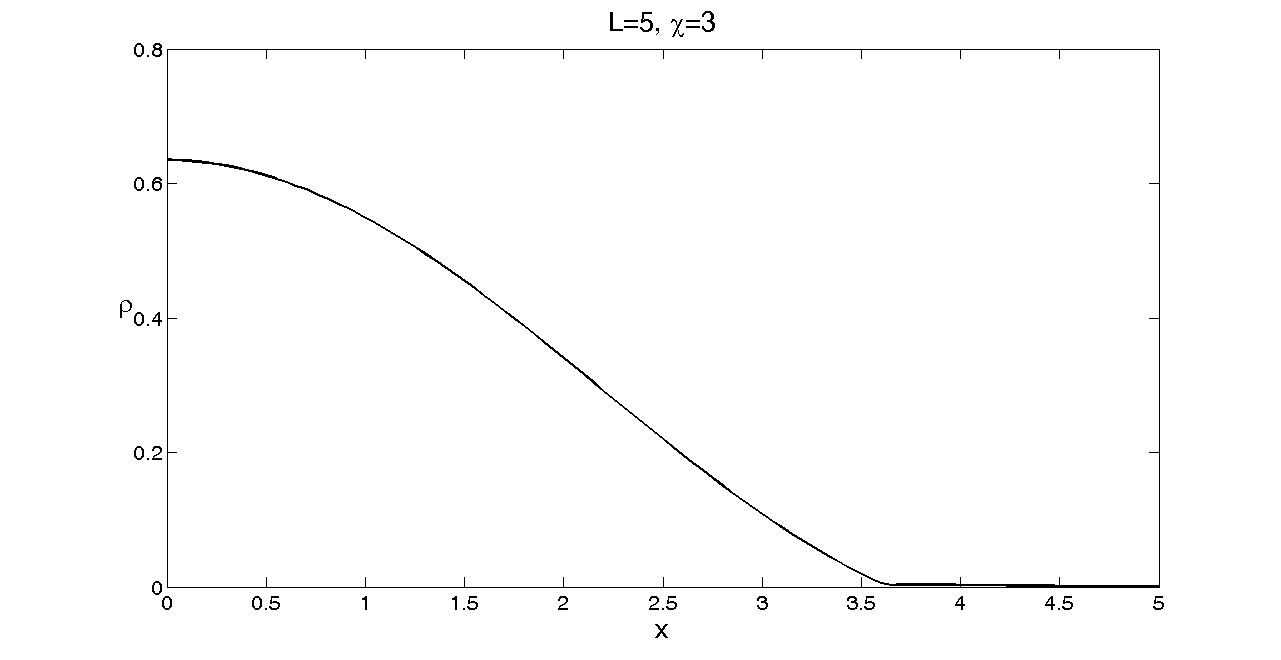}\\
$L=1$&$L=5$\\
\includegraphics[scale=0.17]{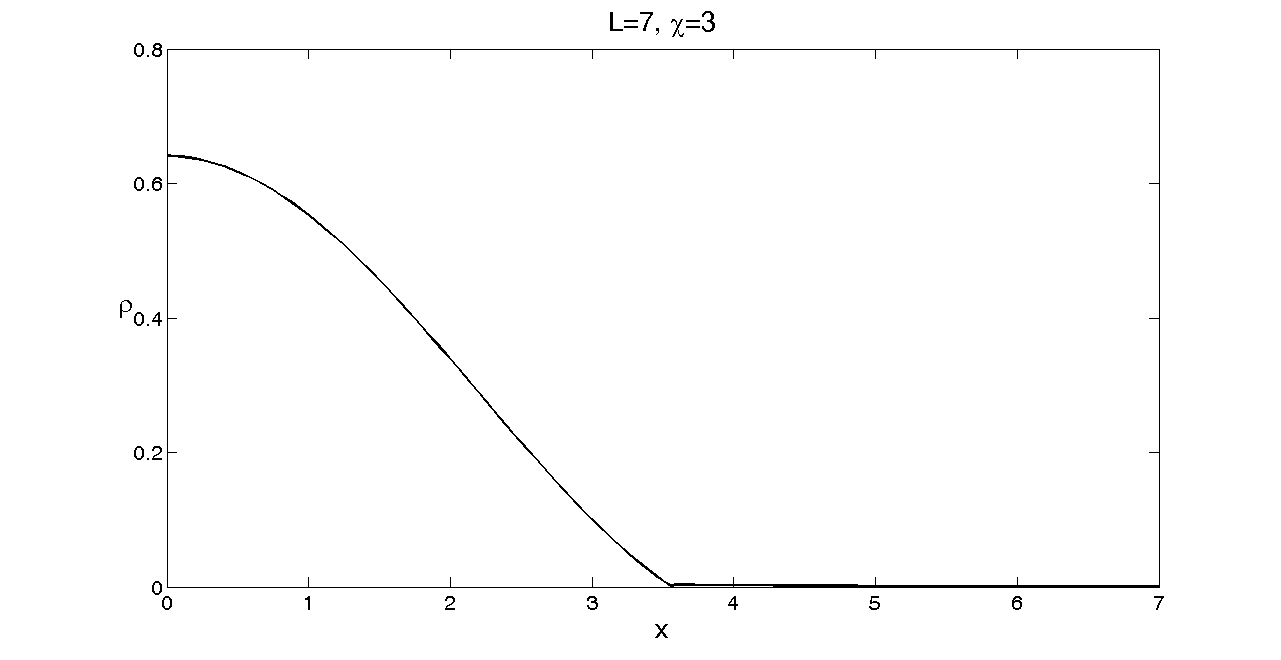}&\includegraphics[scale=0.17]{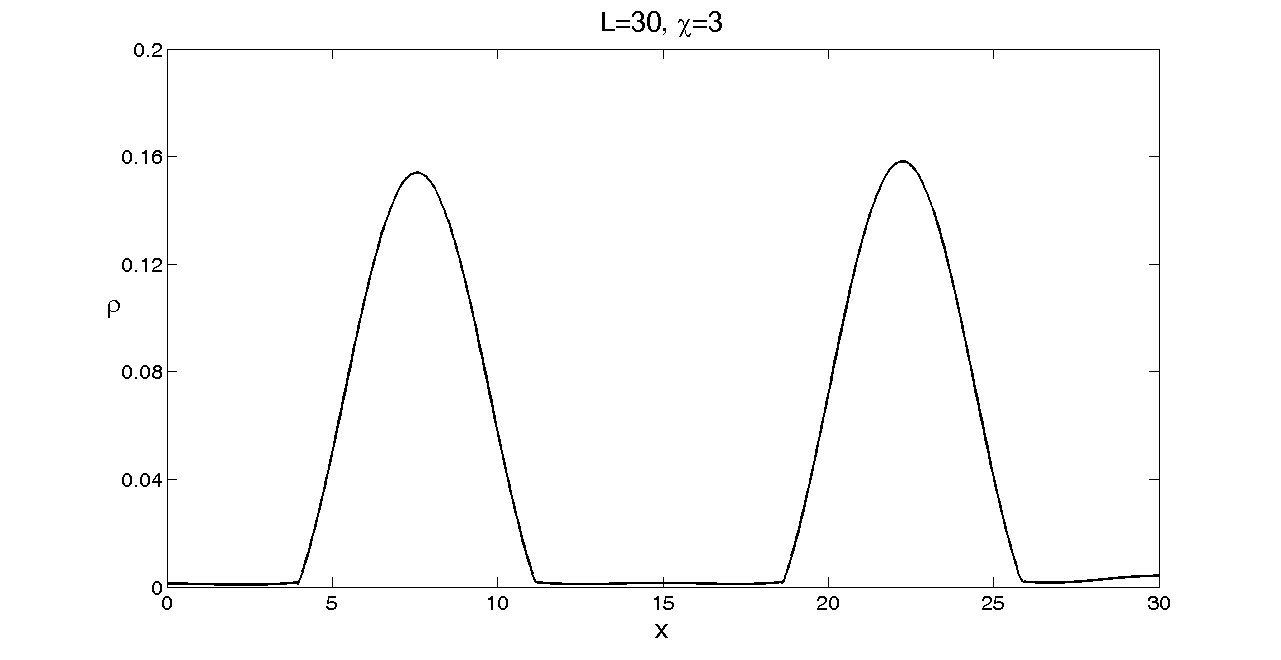}\\
$L=7$&$L=30$\\
\end{tabular}
\caption{{\small{Density profiles of  the asymptotic  states of system (\ref{eq:main_system}) with $P(\rho)=\varepsilon\rho^2$, $\chi=3$, $\varepsilon=D=a=b=1$ and total mass $M=1.3183$ 
for different values of the length of the domain $L=\{1,5,7,30\}$.}}}
\label{Fig:dependence_L}
\end{center}
\end{figure}

We observe the same phenomenon when we increase the value of the chemosensitivity constant $\chi$. We present  at Figure~\ref{Fig:dependence_chi} different asymptotic profiles obtained for different values of $\chi$. Passing from  $\chi=3$ to $\chi=5$ leads to a higher concentration of particles which, as a result, produces free space for new bumps.  However, from $\chi=5$ to $\chi=200$, the global form of the solution, i.e. two lateral bumps on each side of the domain remains the same, even if the two bumps tend to become higher and narrower. 
\begin{figure}[htbp!]
 \begin{center}
 \begin{tabular}{cc}
\includegraphics[scale=0.17]{L7_chi3.png}&\includegraphics[scale=0.17]{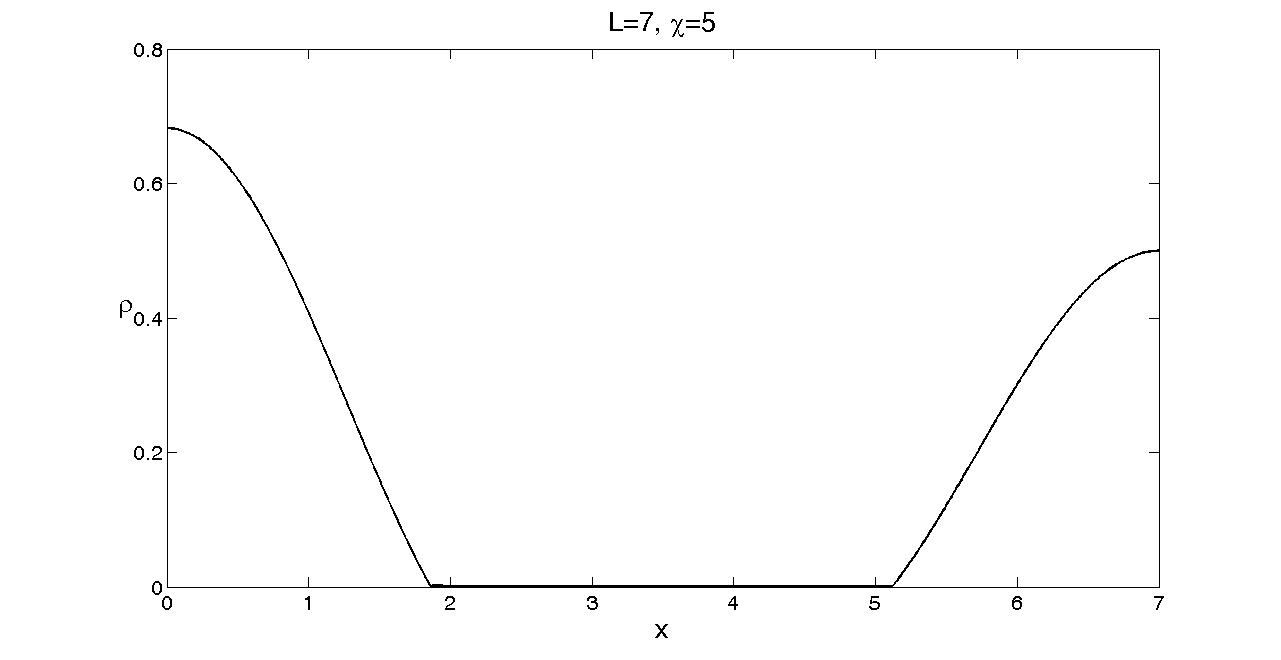}\\
$\chi=3$&$\chi=5$\\
\includegraphics[scale=0.17]{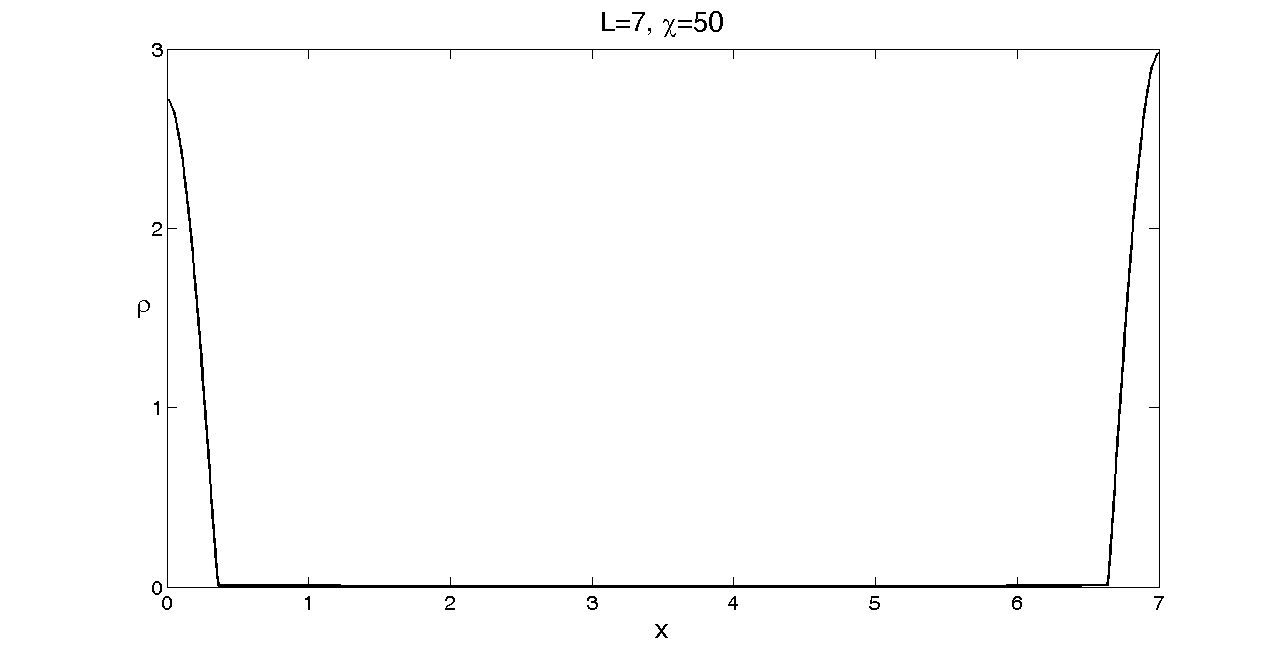}&\includegraphics[scale=0.17]{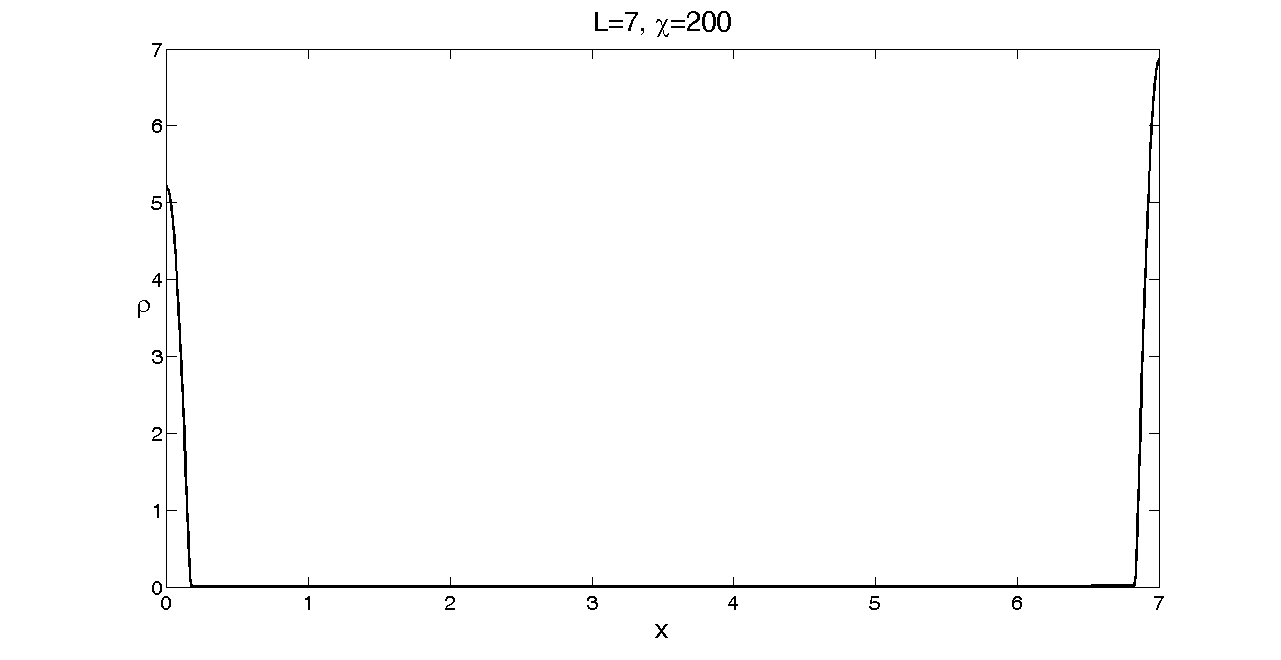}\\
$\chi=50$&$\chi=200$\\
\end{tabular}
\caption{{\small{Density profiles of  the asymptotic  states of system (\ref{eq:main_system}) with $P(\rho)=\varepsilon\rho^2$, $L=7$, $\varepsilon=D=a=b=1$ and total mass $M=1.3183$  for different values of the chemosensitivity constant $\chi=\{3,5,50,200\}$.}}}
\label{Fig:dependence_chi}
\end{center}
\end{figure}

We see that the free space available for the cells has an essential effect on the formation of nonconstant steady states. If the size of the domain  is too small, only constant solutions are expected. A growth of the available space, by increasing directly the length of the domain or by concentrating particles in smaller regions, leads to the formation of new bumps. However, at some point, a  further increase of the chemotactic force, even up to important values, 
does not produce any  new bumps.
So,  we think  that, on each domain,  there is a maximum number of bumps 
 once  all the parameters 
  are fixed.

\subsection{Dependence  on the adiabatic coefficient $\gamma$ of the asymptotic  states}
The adiabatic exponent $\gamma$ describes the response of cells to compression. 
For a high value of $\gamma$, the internal pressure repealing the cells is very strong.
For example, for ideal gases, the value of  $\gamma$ is taken  between 1 and 2, whereas in the Saint-Venant system, describing geophysical flows, it is equal to 2. In the  case of cells, it should be much larger as they are less compressible than gas or water molecules. 

In Section~\ref{sec:stationary_solutions}, we presented a general form of nonconstant steady states with several bumps 
  for arbitrary $\gamma$. However,  we obtained explicit solutions only in the case $\gamma=2$. In this subsection,  we study how the equilibria change for  different values of $\gamma$ and especially how  the number of bumps varies.
  
 
 We consider system (\ref{eq:main_system}) with $\varepsilon=1,D=0.1,a=20,b=10,\chi=10$ defined on the interval $[0,3]$
 	and with initial data
	\begin{displaymath}
\rho_{0}(x)=\left( 1.5+\sin(4\pi|x-L\slash 4|)\right),\quad\phi_{0}=0,\qquad u_{0}=0.
\end{displaymath}
  In Figure~\ref{fig:dependence_gamma}, we plot the asymptotic solutions for density (on the left) and chemoattractant concentration (on the right) 
   for the following values of the adiabatic coefficient : $\gamma=$\{2 (blue), 3 (green), 4 (red), 5 (cyan)\}. 
    We observe how the number of bumps changes, namely $4$ bumps for $\gamma=2$, $3$ for  $\gamma=3$, $2$ for  $\gamma=4$ (among which $2$ lateral bumps in these $3$ cases) and a constant profile for $\gamma=5$. Large values of $\gamma$ imply strong repealing forces at higher densities. It prevents the formation  of  high concentrations of cells. When the value of $\gamma$ increases, the 
    height 
    of bumps decreases, while their support enlarges. Moreover, when the distance between the supports of two neighboring bumps becomes zero, they join together and,  for $\gamma$ high enough, the pressure forces are stronger than the chemotactic movement and constant steady states are observed. 
\begin{figure}[htbp!]
\begin{center}
\begin{tabular}{cc}
\includegraphics[scale=0.17]{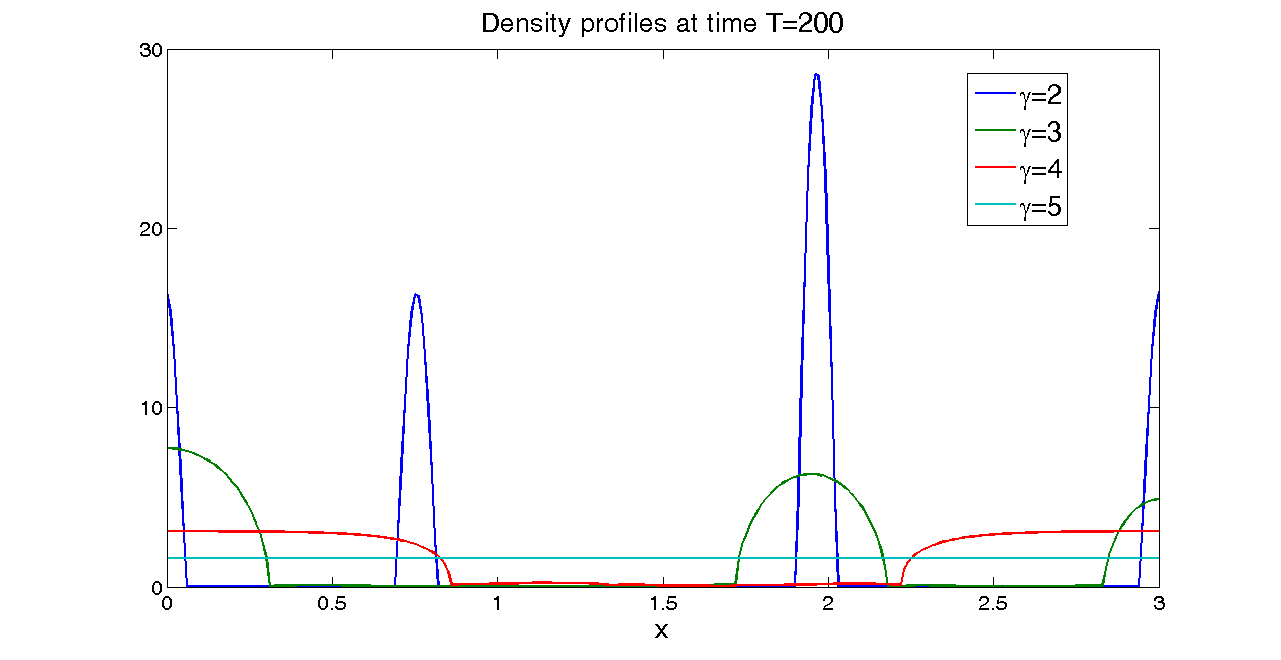}&
\includegraphics[scale=0.17]{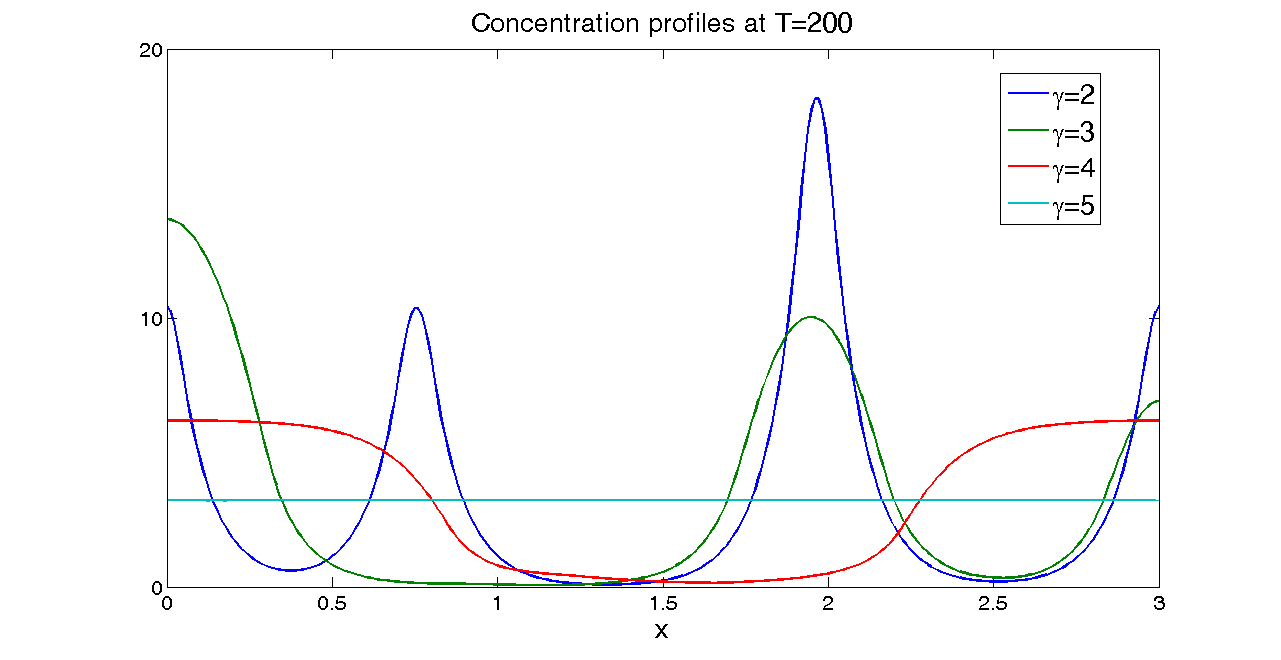}
\end{tabular}
\caption{{\small{Profiles of the density $\rho$ (on the left) and of the chemoattractant  concentration $\phi$  (on the right) at asymptotic  states of  system (\ref{eq:main_system}) with $\varepsilon=1,D=0.1,a=20,b=10,\chi=10$ on the interval $[0,3]$ 
for different values of the adiabatic exponent $\gamma=$\{2 (blue),3 (green),4 (red),5 (cyan)\}.}}}
\label{fig:dependence_gamma}
\end{center}
\end{figure}

\subsection{Dependence  on the initial mass of the asymptotic  states}
Experiments with endothelial cells performed by Serini et.al \cite{Gamba2} showed that a vascular-like network develops only if the initial density of cells ranges from $100$ to $400$ cells/mm$^2$. Below this interval, a disconnected structure is observed, while, above, a continuous carpet of cells with holes appears. System (\ref{eq:main_system}) was constructed to model the phenomenon of early formation of blood vessels and its solutions should also reflect this  dependence on the initial mass. These experimental results suggest a particular behavior of steady states  with several bumps 
of system (\ref{eq:main_system}). The regions where the density is strictly positive may  correspond to the location of capillaries and  would become thicker for large initial masses. Moreover, we expect to find a threshold value of the initial mass, above which constant equilibria are observed. 

We performed  some simulations of system (\ref{eq:main_system}) with $\gamma=2$ and $\gamma=3$, which are presented at Figure~\ref{mass}. On the left (resp. on the right), we can see the asymptotic profiles for density  in the case $\gamma=2$ (resp. $\gamma=3$)  for different values of the initial mass. The initial data are the same equal to
\begin{displaymath}
\rho_{0}(x)=\xi\left( 1+\sin(4\pi|x-L\slash 4|)\right),\quad\phi_{0}=0,\qquad u_{0}=0,
\end{displaymath}
and the initial density  is only multiplied  by different constants in order to change the value of the initial mass. 
  In the first case $\gamma=2$, we notice that the number of bumps is equal to $4$ and remains exactly the same for the different masses. Moreover, 
  the support of the  bumps is also independent of the initial mass, whereas the height of the bumps increases with the initial mass. This can be seen theoretically, since the equations (\ref{bump:x}) or  (\ref{interface_centered}) determining the interface point $\bar{x}$ do not depend on the mass, while the other equations  to determine the stationary density do. We notice that this behavior is not the one expected if we consider the experimental observations.
  However, in the second case  $\gamma=3$, the dependency on the mass is totally different and fits the experiments mentioned above.
  Indeed, when the mass increases, the supports of the bumps become larger and the bumps join together,  until reaching the constant equilibrium for a mass large enough.
\begin{figure}[htbp!]
\begin{center}
\begin{tabular}{cc}
\includegraphics[scale=0.17]{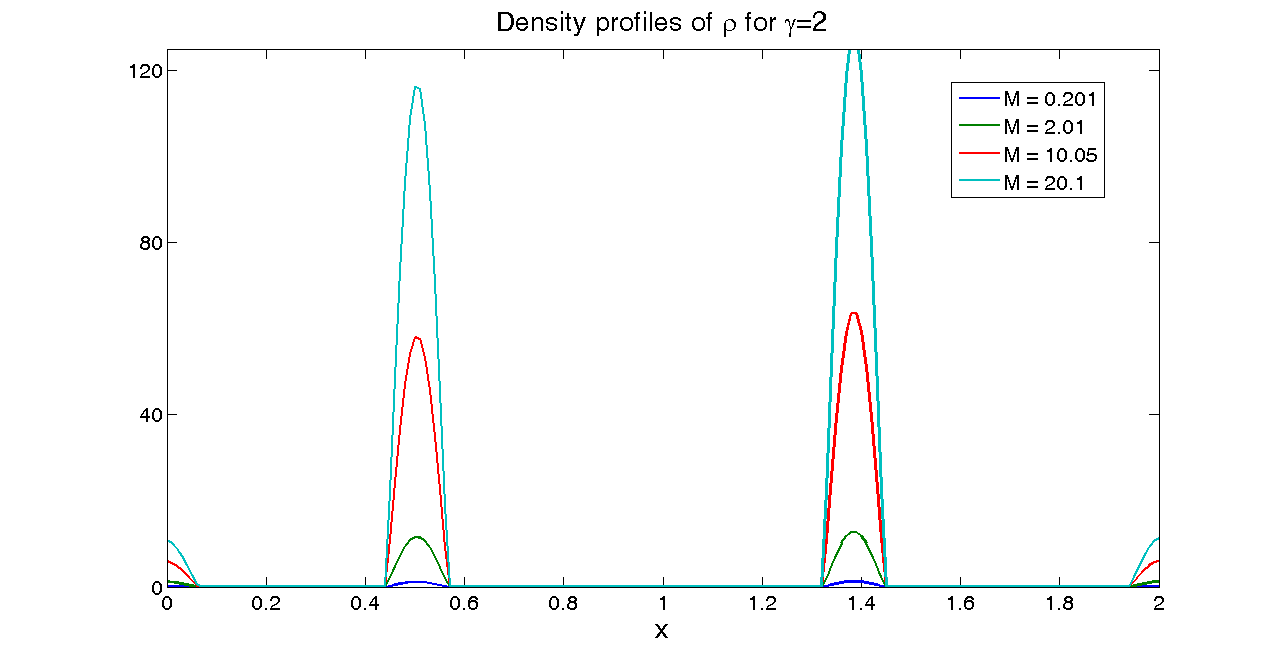}&\includegraphics[scale=0.17]{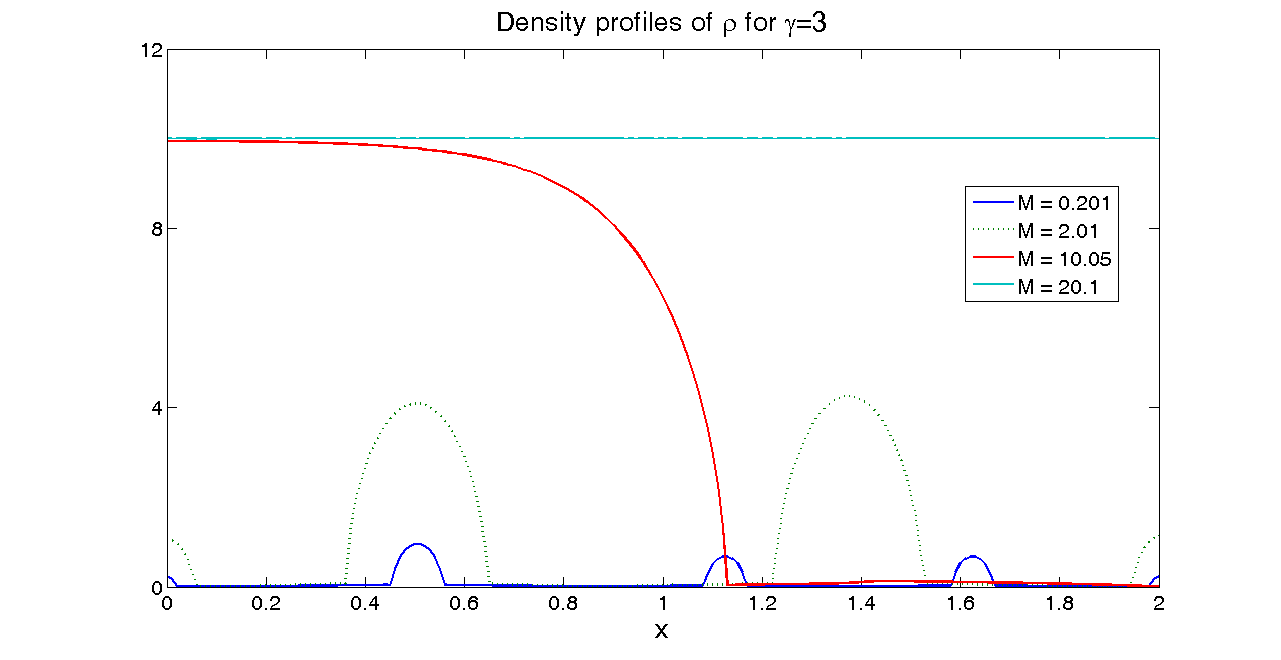}
\end{tabular}
\caption{{\small{Density profiles in the case $P(\rho)=\rho^2$ (on the left) and $P(\rho)=\rho^3$ (on the right) for system (\ref{eq:main_system}) with $\varepsilon=1$, $D=0.1$, $a=20$, $b=10$, $\chi=10$  and an initial datum of the form  $\rho_{0}(x)=\xi(1+\sin(4\pi|x-L\slash 4|))$. Comparison for different initial masses with  $\xi=$\{0.1(blue), 1 (green), 5 (red), 10 (cyan)\}.}}}
\label{mass}
\end{center}
\end{figure} 

\medskip

{\bf Acknowledgement.} The authors thank Fran\c{c}ois Bouchut for some useful suggestions. This work has been partially supported by
the project PORAbruzzo and   by the ANR project MONUMENTALG, ANR-10-JCJC 0103.

\medskip


@article {KS,
    AUTHOR = {Keller, E.F. and Segel, L.A.},
     TITLE = {Initiation of slime mold aggregation viewed as an instability.},
   JOURNAL = { J. Theor. Biol.},
    VOLUME = {26},
      YEAR = {1970},
     PAGES = {399--415},
}


@article {H,
    AUTHOR = {Horstmann, Dirk},
     TITLE = {From 1970 until present: the {K}eller-{S}egel model in
              chemotaxis and its consequences. {I}},
   JOURNAL = {Jahresber. Deutsch. Math.-Verein.},
  FJOURNAL = {Jahresbericht der Deutschen Mathematiker-Vereinigung},
    VOLUME = {105},
      YEAR = {2003},
    NUMBER = {3},
     PAGES = {103--165},
      ISSN = {0012-0456},
     CODEN = {JDMVA7},
   MRCLASS = {35K57 (35B30 35J20 35J60 92C17)},
  MRNUMBER = {2013508 (2005f:35163)},
}


@article {CCE12,
    AUTHOR = {Calvez, Vincent and Corrias, Lucilla and Ebde, Mohamed
              Abderrahman},
     TITLE = {Blow-up, concentration phenomenon and global existence for the
              {K}eller-{S}egel model in high dimension},
   JOURNAL = {Comm. Partial Differential Equations},
  FJOURNAL = {Communications in Partial Differential Equations},
    VOLUME = {37},
      YEAR = {2012},
    NUMBER = {4},
     PAGES = {561--584},
      ISSN = {0360-5302},
     CODEN = {CPDIDZ},
   MRCLASS = {35Bxx},
  MRNUMBER = {2901058},
       DOI = {10.1080/03605302.2012.655824},
       URL = {http://dx.doi.org/10.1080/03605302.2012.655824},
}
		
@article {CC08,
    AUTHOR = {Calvez, Vincent and Corrias, Lucilla},
     TITLE = {The parabolic-parabolic {K}eller-{S}egel model in {$\Bbb R^2$}},
   JOURNAL = {Commun. Math. Sci.},
  FJOURNAL = {Communications in Mathematical Sciences},
    VOLUME = {6},
      YEAR = {2008},
    NUMBER = {2},
     PAGES = {417--447},
      ISSN = {1539-6746},
   MRCLASS = {35Q80 (35B60 35K45 92B05 92E20)},
  MRNUMBER = {2433703 (2009m:35518)},
       URL = {http://projecteuclid.org/getRecord?id=euclid.cms/1214949930},
}
		

@UNPUBLISHED{DS12,
  author = {Di Russo, Cristiana and Sepe, Alice},
  title = {Existence and Asymptotic Behavior of Solutions to a Quasilinear Hyperbolic-Parabolic Model of Vasculogenesis},
  note = {Preprint; Submitted},
  url = {http://arxiv.org/abs/1111.4180},
  year = {2011},
}

@PHDTHESIS{thD,
  author = {Di Russo, Cristiana},
  title = {Analysis and Numerical Approximations of Hydrodynamical Models of Biological Movements},
  school = {University of Rome 3},
 url = {http://ricerca.mat.uniroma3.it/dottorato/Tesi/DiRusso.pdf},
  year = {2011},
}


@article {NY07,
    AUTHOR = {Nagai, Toshitaka and Yamada, Tetsuya},
     TITLE = {Large time behavior of bounded solutions to a parabolic system               of chemotaxis in the whole space},
   JOURNAL = {J. Math. Anal. Appl.},
  FJOURNAL = {Journal of Mathematical Analysis and Applications},
    VOLUME = {336},
      YEAR = {2007},
    NUMBER = {1},
     PAGES = {704--726},
      ISSN = {0022-247X},
     CODEN = {JMANAK},
   MRCLASS = {35K45 (35B40 35K57 92C17)},
  MRNUMBER = {2348536 (2008h:35133)},
MRREVIEWER = {Je-Chiang Tsai},
       DOI = {10.1016/j.jmaa.2007.03.014},
       URL = {http://dx.doi.org/10.1016/j.jmaa.2007.03.014},
}

@book {M1,
    AUTHOR = {Murray, J. D.},
     TITLE = {Mathematical biology. {I}},
    SERIES = {Interdisciplinary Applied Mathematics},
    VOLUME = {17},
   EDITION = {Third},
      NOTE = {An introduction},
 PUBLISHER = {Springer-Verlag},
   ADDRESS = {New York},
      YEAR = {2002},
     PAGES = {xxiv+551},
      ISBN = {0-387-95223-3},
   MRCLASS = {92B05 (92-01)},
  MRNUMBER = {1908418 (2004b:92003)},
MRREVIEWER = {Trachette L. Jackson},
}

@book {M2,
    AUTHOR = {Murray, J. D.},
     TITLE = {Mathematical biology. {II}},
    SERIES = {Interdisciplinary Applied Mathematics},
    VOLUME = {18},
   EDITION = {Third},
      NOTE = {Spatial models and biomedical applications},
 PUBLISHER = {Springer-Verlag},
   ADDRESS = {New York},
      YEAR = {2003},
     PAGES = {xxvi+811},
      ISBN = {0-387-95228-4},
   MRCLASS = {92-02 (92B05 92C05 92D30)},
  MRNUMBER = {1952568 (2004b:92001)},
MRREVIEWER = {Trachette L. Jackson},
}


@book {Pe,
    AUTHOR = {Perthame, Beno{\^{\i}}t},
     TITLE = {Transport equations in biology},
    SERIES = {Frontiers in Mathematics},
 PUBLISHER = {Birkh\"auser Verlag},
   ADDRESS = {Basel},
      YEAR = {2007},
     PAGES = {x+198},
      ISBN = {978-3-7643-7841-7; 3-7643-7841-7},
   MRCLASS = {35-02 (35Q80 92C17 92C50 92D25)},
  MRNUMBER = {2270822 (2007j:35004)},
MRREVIEWER = {Reinhard Illner},
}

@article{Gosse_Toscani,
	Author = {Gosse, Laurent and Toscani, Giuseppe},
	Date-Added = {2011-03-24 12:01:49 +0100},
	Date-Modified = {2011-03-24 12:01:49 +0100},
	Doi = {10.1137/S0036142901399392},
	Fjournal = {SIAM Journal on Numerical Analysis},
	Issn = {0036-1429},
	Journal = {SIAM J. Numer. Anal.},
	Mrclass = {65M06 (35F25 35Q30 82C80)},
	Mrnumber = {2004192 (2004g:65107)},
	Mrreviewer = {Matthias K. Gobbert},
	Number = {2},
	Pages = {641--658 (electronic)},
	Title = {Space localization and well-balanced schemes for discrete kinetic models in diffusive regimes},
	Url = {http://dx.doi.org/10.1137/S0036142901399392},
	Volume = {41},
	Year = {2003},
	Bdsk-Url-1 = {http://dx.doi.org/10.1137/S0036142901399392}}

@article {DalMaso_LeFloch_Murat,
    AUTHOR = {Dal Maso, Gianni and Lefloch, Philippe G. and Murat,
              Fran{\c{c}}ois},
     TITLE = {Definition and weak stability of nonconservative products},
   JOURNAL = {J. Math. Pures Appl. (9)},
  FJOURNAL = {Journal de Math\'ematiques Pures et Appliqu\'ees. Neuvi\`eme
              S\'erie},
    VOLUME = {74},
      YEAR = {1995},
    NUMBER = {6},
     PAGES = {483--548},
      ISSN = {0021-7824},
     CODEN = {JMPAAM},
   MRCLASS = {46F10 (35D05 35K55 35L60)},
  MRNUMBER = {1365258 (97b:46052)},
MRREVIEWER = {Michael Langenbruch},
}

@article {Greenberg_LeRoux,
    AUTHOR = {Greenberg, J. M. and Leroux, A. Y.},
     TITLE = {A well-balanced scheme for the numerical processing of source
              terms in hyperbolic equations},
   JOURNAL = {SIAM J. Numer. Anal.},
  FJOURNAL = {SIAM Journal on Numerical Analysis},
    VOLUME = {33},
      YEAR = {1996},
    NUMBER = {1},
     PAGES = {1--16},
      ISSN = {0036-1429},
     CODEN = {SJNAAM},
   MRCLASS = {65M06 (35L65 65M12)},
  MRNUMBER = {1377240 (97c:65144)},
MRREVIEWER = {Mohammad Asadzadeh},
       DOI = {10.1137/0733001},
       URL = {http://dx.doi.org/10.1137/0733001},
}


@article {LeFloch_Tzavaras,
    AUTHOR = {Lefloch, Philippe G. and Tzavaras, Athanasios E.},
     TITLE = {Representation of weak limits and definition of
              nonconservative products},
   JOURNAL = {SIAM J. Math. Anal.},
  FJOURNAL = {SIAM Journal on Mathematical Analysis},
    VOLUME = {30},
      YEAR = {1999},
    NUMBER = {6},
     PAGES = {1309--1342 (electronic)},
      ISSN = {0036-1410},
   MRCLASS = {35L60 (28A75)},
  MRNUMBER = {1718304 (2001e:35113)},
MRREVIEWER = {Vyacheslav V. Chistyakov},
       DOI = {10.1137/S0036141098341794},
       URL = {http://dx.doi.org/10.1137/S0036141098341794},
}

@article{Gamba1,
    author = {Serini, Guido and Ambrosi, Davide and Giraudo, Enrico and Gamba, Andrea and Preziosi, Luigi and Bussolino, Federico},
    day = {15},
    doi = {10.1093/emboj/cdg176},
    journal = {The EMBO Journal},
    keywords = {blood-vessels, flows, print},
    month = apr,
    number = {8},
    pages = {1771--1779},
    posted-at = {2011-10-16 02:51:01},
    priority = {2},
    publisher = {Nature Publishing Group},
    title = {{Modeling the early stages of vascular network assembly}},
    url = {http://dx.doi.org/10.1093/emboj/cdg176},
    volume = {22},
    year = {2003}
}

@article{Gamba2,
title={Percolation, morphogenesis, and burgers dynamics in blood vessels formation.},
author={Gamba, A. and Ambrosi, D. and Coniglio, A. and de Candia, A. and Di Talia, S. and Giraudo, E. and Serini, G. and Preziosi, L. and Bussolino, F.},
journal={Phys Rev Lett},
volume={90},
number={11},
pages={118101},
year={2003},
}

 @article {Natalini_Ribot,
    AUTHOR = { Natalini, Roberto and Ribot, Magali},
     TITLE = {An asymptotic high order mass-preserving scheme for a hyperbolic model of chemotaxis},
   JOURNAL = {SIAM J. Num. Anal.},
volume={50},
number={2},
year={2012},
PAGES = {883--905},
   }

@article {gumanari,
    AUTHOR = {Guarguaglini, Francesca Romana and Mascia, Corrado and
              Natalini, Roberto and Ribot, Magali},
     TITLE = {Stability of constant states of qualitative behavior of
              solutions to a one dimensional hyperbolic model of chemotaxis},
   JOURNAL = {Discrete Contin. Dyn. Syst. Ser. B},
  FJOURNAL = {Discrete and Continuous Dynamical Systems. Series B. A Journal
              Bridging Mathematics and Sciences},
    VOLUME = {12},
      YEAR = {2009},
    NUMBER = {1},
     PAGES = {39--76},
      ISSN = {1531-3492},
   MRCLASS = {35Q92 (35B35 35B40 35L60 92C17)},
  MRNUMBER = {2517584 (2010j:35557)},
MRREVIEWER = {Tong Li},
       DOI = {10.3934/dcdsb.2009.12.39},
       URL = {http://biblioproxy.cnr.it:2093/10.3934/dcdsb.2009.12.39},
}
   
   
   @article {Aregba_Briani_Natalini,
    AUTHOR = {Aregba-Driollet, Denise and Briani, Maya and Natalini,
              Roberto},
     TITLE = {Asymptotic high-order schemes for {$2\times2$} dissipative
              hyperbolic systems},
   JOURNAL = {SIAM J. Numer. Anal.},
  FJOURNAL = {SIAM Journal on Numerical Analysis},
    VOLUME = {46},
      YEAR = {2008},
    NUMBER = {2},
     PAGES = {869--894},
      ISSN = {0036-1429},
   MRCLASS = {65M12 (35L65 65M06 76M20 76N15)},
  MRNUMBER = {2383214 (2008k:65181)},
MRREVIEWER = {{\'A}ngel Plaza},
       DOI = {10.1137/060678373},
       URL = {http://dx.doi.org/10.1137/060678373},
}



@article {Gosse_chemo1,
    AUTHOR = {Gosse, Laurent},
     TITLE = {Asymptotic-preserving and well-balanced schemes for the 1{D}
              {C}attaneo model of chemotaxis movement in both hyperbolic and diffusive regimes},
   JOURNAL = {J. Math. Anal. Appl.},
  FJOURNAL = {Journal of Mathematical Analysis and Applications},
    VOLUME = {388},
      YEAR = {2012},
    NUMBER = {2},
     PAGES = {964--983},
      ISSN = {0022-247X},
     CODEN = {JMANAK},
   MRCLASS = {92C17 (35Q92)},
  MRNUMBER = {2869801},
       DOI = {10.1016/j.jmaa.2011.10.039},
       URL = {http://dx.doi.org/10.1016/j.jmaa.2011.10.039},
}



@article {Gosse_chemo2,
    AUTHOR = {Gosse, Laurent},
     TITLE = {Maxwellian decay for well-balanced approximations of a super-characteristic chemotaxis model},
   JOURNAL = {SIAM J. Scient. Comput.},
 VOLUME = {34},
      YEAR = {2012},
     PAGES = {A520--A545},
}

@article {Filbet_Shu,
    AUTHOR = {Filbet, Francis and Shu, Chi-Wang},
     TITLE = {Approximation of hyperbolic models for chemosensitive
              movement},
   JOURNAL = {SIAM J. Sci. Comput.},
  FJOURNAL = {SIAM Journal on Scientific Computing},
    VOLUME = {27},
      YEAR = {2005},
    NUMBER = {3},
     PAGES = {850--872 (electronic)},
      ISSN = {1064-8275},
   MRCLASS = {35K57 (65M06 92C15)},
  MRNUMBER = {2199911 (2006h:35138)},
       DOI = {10.1137/040604054},
       URL = {http://dx.doi.org/10.1137/040604054},
}

@incollection{Bouchut_book,
 author = {Bouchut, Fran{\c{c}}ois},
 booktitle = {{Nonlinear stability of finite volume methods for hyperbolic conservation laws, and well-balanced schemes for sources}},
title={{Nonlinear stability of finite volume methods for hyperbolic conservation laws, and well-balanced schemes for sources}},
series = {Frontiers in Mathematics },
publisher = {Birkhauser},
year = {2004},
}


@article {Katsaounis_Pertham_Simeoni,
    AUTHOR = {Katsaounis, Th. and Perthame, B. and Simeoni, C.},
     TITLE = {Upwinding sources at interfaces in conservation laws},
   JOURNAL = {Appl. Math. Lett.},
  FJOURNAL = {Applied Mathematics Letters. An International Journal of Rapid
              Publication},
    VOLUME = {17},
      YEAR = {2004},
    NUMBER = {3},
     PAGES = {309--316},
      ISSN = {            0893-9659      },
     CODEN = {AMLEEL},
   MRCLASS = {65M06 (35L65 65M12)},
  MRNUMBER = {2044041},
       DOI = {10.1016/S0893-9659(04)90068-7},
       URL = {http://dx.doi.org/10.1016/S0893-9659(04)90068-7},
}

@incollection {Perthame_Simeoni,
    AUTHOR = {Perthame, Beno{\^{\i}}t and Simeoni, Chiara},
     TITLE = {Convergence of the upwind interface source method for
              hyperbolic conservation laws},
 BOOKTITLE = {Hyperbolic problems: theory, numerics, applications},
     PAGES = {61--78},
 PUBLISHER = {Springer},
   ADDRESS = {Berlin},
      YEAR = {2003},
   MRCLASS = {65M06 (65M12)},
  MRNUMBER = {2053160},
}

@article {Bouchut_Ounaissa_Perthame,
    AUTHOR = {Bouchut, Fran{\c{c}}ois and Ounaissa, Haythem and Perthame,
              Beno{\^{\i}}t},
     TITLE = {Upwinding of the source term at interfaces for {E}uler
              equations with high friction},
   JOURNAL = {Comput. Math. Appl.},
  FJOURNAL = {Computers \& Mathematics with Applications. An International
              Journal},
    VOLUME = {53},
      YEAR = {2007},
    NUMBER = {3-4},
     PAGES = {361--375},
      ISSN = {0898-1221},
     CODEN = {CMAPDK},
   MRCLASS = {76M12 (35L65 65M06)},
  MRNUMBER = {2323698 (2008e:76120)},
MRREVIEWER = {Susana Serna},
       DOI = {10.1016/j.camwa.2006.02.055},
       URL = {http://dx.doi.org/10.1016/j.camwa.2006.02.055},
       }
@article {xumin,
    AUTHOR = {Gu, Xumin and Lei, Zhen},
     TITLE = {Well-posedness of 1-{D} compressible {E}uler-{P}oisson
              equations with physical vacuum},
   JOURNAL = {J. Differential Equations},
  FJOURNAL = {Journal of Differential Equations},
    VOLUME = {252},
      YEAR = {2012},
    NUMBER = {3},
     PAGES = {2160--2188},
      ISSN = {0022-0396},
     CODEN = {JDEQAK},
   MRCLASS = {35Q35 (35A09 35B30 76N15 76X05)},
  MRNUMBER = {2860614},
       DOI = {10.1016/j.jde.2011.10.019},
       URL = {http://biblioproxy.cnr.it:2093/10.1016/j.jde.2011.10.019},
}


@article {masmoudi,
    AUTHOR = {Jang, Juhi and Masmoudi, Nader},
     TITLE = {Well-posedness for compressible {E}uler equations with
              physical vacuum singularity},
   JOURNAL = {Comm. Pure Appl. Math.},
  FJOURNAL = {Communications on Pure and Applied Mathematics},
    VOLUME = {62},
      YEAR = {2009},
    NUMBER = {10},
     PAGES = {1327--1385},
      ISSN = {0010-3640},
     CODEN = {CPAMA},
   MRCLASS = {35Q31 (35Q35 76N10)},
  MRNUMBER = {2547977 (2010j:35384)},
MRREVIEWER = {Paolo Secchi},
       DOI = {10.1002/cpa.20285},
       URL = {http://biblioproxy.cnr.it:2093/10.1002/cpa.20285},
}
@article{Audusse,
 title={A Fast and Stable Well-Balanced Scheme with Hydrostatic Reconstruction for Shallow Water Flows}, volume={25},
  url={http://link.aip.org/link/SJOCE3/v25/i6/p2050/s1&Agg=doi},
   number={6}, 
   journal={SIAM Journal on Scientific Computing}, 
   author={Audusse, Emmanuel and Bouchut, Fran{\c{c}}ois and Bristeau, Marie-Odile and Klein, Rupert and Perthame, Beno{\^{\i}}t},
    year={2004}, 
    pages={2050}}


@article {Gosse_NC,
    AUTHOR = {Gosse, Laurent},
     TITLE = {A well-balanced scheme using non-conservative products
              designed for hyperbolic systems of conservation laws with
              source terms},
   JOURNAL = {Math. Models Methods Appl. Sci.},
  FJOURNAL = {Mathematical Models \& Methods in Applied Sciences},
    VOLUME = {11},
      YEAR = {2001},
    NUMBER = {2},
     PAGES = {339--365},
      ISSN = {0218-2025},
   MRCLASS = {65M06 (35L65)},
  MRNUMBER = {1820677 (2002g:65097)},
MRREVIEWER = {A. O. Oganesyan},
       DOI = {10.1142/S021820250100088X},
       URL = {http://dx.doi.org/10.1142/S021820250100088X},
}

@article {Gosse_localization,
    AUTHOR = {Gosse, Laurent},
     TITLE = {Localization effects and measure source terms in numerical
              schemes for balance laws},
   JOURNAL = {Math. Comp.},
  FJOURNAL = {Mathematics of Computation},
    VOLUME = {71},
      YEAR = {2002},
    NUMBER = {238},
     PAGES = {553--582 (electronic)},
      ISSN = {0025-5718},
     CODEN = {MCMPAF},
   MRCLASS = {65M06 (35F25 65M12)},
  MRNUMBER = {1885615 (2003e:65147)},
       DOI = {10.1090/S0025-5718-01-01354-0},
       URL = {http://dx.doi.org/10.1090/S0025-5718-01-01354-0},
}

@article {MR1933816,
    Author = {Botchorishvili, Ramaz and Perthame, Benoit and Vasseur,
              Alexis},
     TITLE = {Equilibrium schemes for scalar conservation laws with stiff
              sources},
   JOURNAL = {Math. Comp.},
  FJOURNAL = {Mathematics of Computation},
    VOLUME = {72},
      YEAR = {2003},
    NUMBER = {241},
     PAGES = {131--157 (electronic)},
      ISSN = {0025-5718},
     CODEN = {MCMPAF},
   MRCLASS = {65M06 (35F25 35L65 65M12)},
  MRNUMBER = {1933816 (2003h:65104)},
MRREVIEWER = {Laurent E. Gosse},
       DOI = {10.1090/S0025-5718-01-01371-0},
       URL = {http://dx.doi.org/10.1090/S0025-5718-01-01371-0},
}

\begin{thebibliography}{10}

\bibitem{Aregba_Briani_Natalini}
Denise Aregba-Driollet, Maya Briani, and Roberto Natalini.
\newblock Asymptotic high-order schemes for {$2\times2$} dissipative hyperbolic
  systems.
\newblock {\em SIAM J. Numer. Anal.}, 46(2):869--894, 2008.

\bibitem{Audusse}
Emmanuel Audusse, Fran{\c{c}}ois Bouchut, Marie-Odile Bristeau, Rupert Klein,
  and Beno{\^{\i}}t Perthame.
\newblock A fast and stable well-balanced scheme with hydrostatic
  reconstruction for shallow water flows.
\newblock {\em SIAM Journal on Scientific Computing}, 25(6):2050, 2004.

\bibitem{MR1933816}
Ramaz Botchorishvili, Benoit Perthame, and Alexis Vasseur.
\newblock Equilibrium schemes for scalar conservation laws with stiff sources.
\newblock {\em Math. Comp.}, 72(241):131--157 (electronic), 2003.

\bibitem{Bouchut_book}
Fran{\c{c}}ois Bouchut.
\newblock {Nonlinear stability of finite volume methods for hyperbolic
  conservation laws, and well-balanced schemes for sources}.
\newblock In {\em {Nonlinear stability of finite volume methods for hyperbolic
  conservation laws, and well-balanced schemes for sources}}, Frontiers in
  Mathematics. Birkhauser, 2004.

\bibitem{Bouchut_Ounaissa_Perthame}
Fran{\c{c}}ois Bouchut, Haythem Ounaissa, and Beno{\^{\i}}t Perthame.
\newblock Upwinding of the source term at interfaces for {E}uler equations with
  high friction.
\newblock {\em Comput. Math. Appl.}, 53(3-4):361--375, 2007.

\bibitem{CC08}
Vincent Calvez and Lucilla Corrias.
\newblock The parabolic-parabolic {K}eller-{S}egel model in {$\Bbb R^2$}.
\newblock {\em Commun. Math. Sci.}, 6(2):417--447, 2008.

\bibitem{CCE12}
Vincent Calvez, Lucilla Corrias, and Mohamed~Abderrahman Ebde.
\newblock Blow-up, concentration phenomenon and global existence for the
  {K}eller-{S}egel model in high dimension.
\newblock {\em Comm. Partial Differential Equations}, 37(4):561--584, 2012.

\bibitem{thD}
Cristiana Di~Russo.
\newblock {\em Analysis and Numerical Approximations of Hydrodynamical Models
  of Biological Movements}.
\newblock PhD thesis, University of Rome 3, 2011.

\bibitem{DS12}
Cristiana Di~Russo and Alice Sepe.
\newblock Existence and asymptotic behavior of solutions to a quasilinear
  hyperbolic-parabolic model of vasculogenesis.
\newblock Preprint; Submitted, 2011.

\bibitem{Filbet_Shu}
Francis Filbet and Chi-Wang Shu.
\newblock Approximation of hyperbolic models for chemosensitive movement.
\newblock {\em SIAM J. Sci. Comput.}, 27(3):850--872 (electronic), 2005.

\bibitem{Gamba2}
A.~Gamba, D.~Ambrosi, A.~Coniglio, A.~de~Candia, S.~Di~Talia, E.~Giraudo,
  G.~Serini, L.~Preziosi, and F.~Bussolino.
\newblock Percolation, morphogenesis, and burgers dynamics in blood vessels
  formation.
\newblock {\em Phys Rev Lett}, 90(11):118101, 2003.

\bibitem{Gosse_chemo1}
Laurent Gosse.
\newblock Asymptotic-preserving and well-balanced schemes for the 1{D}
  {C}attaneo model of chemotaxis movement in both hyperbolic and diffusive
  regimes.
\newblock {\em J. Math. Anal. Appl.}, 388(2):964--983, 2012.

\bibitem{Gosse_chemo2}
Laurent Gosse.
\newblock Maxwellian decay for well-balanced approximations of a
  super-characteristic chemotaxis model.
\newblock {\em SIAM J. Scient. Comput.}, 34:A520--A545, 2012.

\bibitem{Gosse_Toscani}
Laurent Gosse and Giuseppe Toscani.
\newblock Space localization and well-balanced schemes for discrete kinetic
  models in diffusive regimes.
\newblock {\em SIAM J. Numer. Anal.}, 41(2):641--658 (electronic), 2003.

\bibitem{Greenberg_LeRoux}
J.~M. Greenberg and A.~Y. Leroux.
\newblock A well-balanced scheme for the numerical processing of source terms
  in hyperbolic equations.
\newblock {\em SIAM J. Numer. Anal.}, 33(1):1--16, 1996.

\bibitem{xumin}
Xumin Gu and Zhen Lei.
\newblock Well-posedness of 1-{D} compressible {E}uler-{P}oisson equations with
  physical vacuum.
\newblock {\em J. Differential Equations}, 252(3):2160--2188, 2012.

\bibitem{gumanari}
Francesca~Romana Guarguaglini, Corrado Mascia, Roberto Natalini, and Magali
  Ribot.
\newblock Stability of constant states of qualitative behavior of solutions to
  a one dimensional hyperbolic model of chemotaxis.
\newblock {\em Discrete Contin. Dyn. Syst. Ser. B}, 12(1):39--76, 2009.

\bibitem{H}
Dirk Horstmann.
\newblock From 1970 until present: the {K}eller-{S}egel model in chemotaxis and
  its consequences. {I}.
\newblock {\em Jahresber. Deutsch. Math.-Verein.}, 105(3):103--165, 2003.

\bibitem{masmoudi}
Juhi Jang and Nader Masmoudi.
\newblock Well-posedness for compressible {E}uler equations with physical
  vacuum singularity.
\newblock {\em Comm. Pure Appl. Math.}, 62(10):1327--1385, 2009.

\bibitem{Katsaounis_Pertham_Simeoni}
Th. Katsaounis, B.~Perthame, and C.~Simeoni.
\newblock Upwinding sources at interfaces in conservation laws.
\newblock {\em Appl. Math. Lett.}, 17(3):309--316, 2004.

\bibitem{KS}
E.F. Keller and L.A. Segel.
\newblock Initiation of slime mold aggregation viewed as an instability.
\newblock {\em J. Theor. Biol.}, 26:399--415, 1970.

\bibitem{M1}
J.~D. Murray.
\newblock {\em Mathematical biology. {I}}, volume~17 of {\em Interdisciplinary
  Applied Mathematics}.
\newblock Springer-Verlag, New York, third edition, 2002.
\newblock An introduction.

\bibitem{M2}
J.~D. Murray.
\newblock {\em Mathematical biology. {II}}, volume~18 of {\em Interdisciplinary
  Applied Mathematics}.
\newblock Springer-Verlag, New York, third edition, 2003.
\newblock Spatial models and biomedical applications.

\bibitem{NY07}
Toshitaka Nagai and Tetsuya Yamada.
\newblock Large time behavior of bounded solutions to a parabolic system of
  chemotaxis in the whole space.
\newblock {\em J. Math. Anal. Appl.}, 336(1):704--726, 2007.

\bibitem{Natalini_Ribot}
Roberto Natalini and Magali Ribot.
\newblock An asymptotic high order mass-preserving scheme for a hyperbolic
  model of chemotaxis.
\newblock {\em SIAM J. Num. Anal.}, 50(2):883--905, 2012.

\bibitem{Pe}
Beno{\^{\i}}t Perthame.
\newblock {\em Transport equations in biology}.
\newblock Frontiers in Mathematics. Birkh\"auser Verlag, Basel, 2007.

\bibitem{Perthame_Simeoni}
Beno{\^{\i}}t Perthame and Chiara Simeoni.
\newblock Convergence of the upwind interface source method for hyperbolic
  conservation laws.
\newblock In {\em Hyperbolic problems: theory, numerics, applications}, pages
  61--78. Springer, Berlin, 2003.

\bibitem{Gamba1}
Guido Serini, Davide Ambrosi, Enrico Giraudo, Andrea Gamba, Luigi Preziosi, and
  Federico Bussolino.
\newblock {Modeling the early stages of vascular network assembly}.
\newblock {\em The EMBO Journal}, 22(8):1771--1779, April 2003.

\end{thebibliography}
\end{document}